\newcommand{\ob}{\pi}
\newcommand{\C}{\mathbb{C}}
\newcommand{\R}{\mathbb{R}}
\newcommand{\RX}{\mathbb{R}_{\tau}X}
\newcommand{\RA}{\mathbb{R}A}
\newcommand{\RC}{\mathbb{R}C}
\newcommand{\CP}{\mathbb{C}P}
\newcommand{\RP}{\mathbb{R}P}
\newcommand{\Z}{\mathbb{Z}}
\newcommand{\N}{\mathbb{N}}
\newcommand{\Mor}{\mathcal{M}or}
\newcommand{\RMor}{\mathcal{M}or^{\R}}
\newcommand{\OP}{\mathcal{O}_{\CP^1}}
\newcommand{\Nu}{\mathcal{N}_u}
\newcommand{\Nud}{\mathcal{N}_{u_0}}
\newcommand{\Nua}{\mathcal{N}_{u_1}}
\newcommand{\Nui}{\mathcal{N}_{u,-\ob_i(\underline{z})}}
\newcommand{\Nuid}{\mathcal{N}_{u_0,-\ob_i(z^0)}}
\newcommand{\Nuidtd}{\mathcal{N}_{u_0,-\ob_i(\tilde{z}^0)}}
\newcommand{\nb}{k_d}
\newcommand{\ev}{ev_{\nb}^d}
\newcommand{\Rev}{\mathbb{R}_{\tau}ev_{\nb}^d}
\newcommand{\evk}{ev_{k}^d}
\newcommand{\red}{red^d}
\newcommand{\Revk}{\mathbb{R}_{\tau}ev_{k}^d}
\newcommand{\evl}{ev_{k_d-1}^d}
\newcommand{\Reg}{Reg_d(X)}
\newcommand{\K}{\mathcal{D}^d_{d',k'}}
\newcommand{\Kd}{\mathcal{K}_{d',k'}^{d}}
\newcommand{\Kg}{\Mc \setminus \M}
\newcommand{\RK}{\mathbb{R}_{\tau}\mathcal{K}_{\nb}^d}
\newcommand{\RKd}{\mathbb{R}_{\tau}\mathcal{K}_{d',k'}^{d}}
\newcommand{\RKg}{\mathbb{R}_{\tau}\mathcal{K}_{\nb}^{d}}
\newcommand{\RKk}{\mathbb{R}_{\tau}\mathcal{K}_{k}^d}
\newcommand{\W}{\omega_{\gamma}}
\newcommand{\Bl}{\mathfrak{Bl}}
\newcommand{\M}{\mathcal{M}_{k_d}^d(X)}
\newcommand{\Mk}{\mathcal{M}_{k}^d(X)}
\newcommand{\Mless}{\overline{\mathcal{M}}_{k_d-1}^d(X)}
\newcommand{\Mc}{\overline{\mathcal{M}}_{k_d}^d(X)}
\newcommand{\Mck}{\overline{\mathcal{M}}_{k}^d(X)}
\newcommand{\Mco}{\overline{\mathcal{M}}_{0}^d(X)}
\newcommand{\TMcl}{T_{\overline{\mathcal{M}}_{k_d-1}^d(X)^*}}
\newcommand{\TMc}{T_{\overline{\mathcal{M}}_{k_d}^d(X)^*}}
\newcommand{\TMck}{T_{\overline{\mathcal{M}}_{k}^d(X)^*}}
\newcommand{\TMm}{T_{\map}\mathcal{M}_{k_d}^d(X)}
\newcommand{\TMmc}{T_{\mapg}\Mc}
\newcommand{\RM}{\mathbb{R}_{\tau}\mathcal{M}_{k_d}^d(X)}
\newcommand{\RMk}{\mathbb{R}_{\tau}\mathcal{M}_{k}^d(X)}
\newcommand{\RMl}{\mathbb{R}_{\tau}\mathcal{M}_{\nb+\ell}^d(X)}
\newcommand{\RMo}{\mathbb{R}_{\tau}\mathcal{M}_{0}^d(X)}
\newcommand{\RMco}{\mathbb{R}_{\tau}\overline{\mathcal{M}}_{0}^d(X)}
\newcommand{\RMc}{\mathbb{R}_{\tau}\overline{\mathcal{M}}_{k_d}^d(X)}
\newcommand{\RMck}{\mathbb{R}_{\tau}\overline{\mathcal{M}}_{k}^d(X)}
\newcommand{\RMcl}{\mathbb{R}_{\tau}\overline{\mathcal{M}}_{k_d+\ell}^d(X)}
\newcommand{\TRM}{T_{\mathbb{R}_{\tau}\mathcal{M}_{\nb}^d(X)^*}}
\newcommand{\TRMk}{T_{\mathbb{R}_{\tau}\mathcal{M}_{k}^d(X)^*}}
\newcommand{\TRMo}{T_{\mathbb{R}_{\tau}\mathcal{M}_{0}^d(X)^*}}
\newcommand{\TRMc}{T_{\mathbb{R}_{\tau}\overline{\mathcal{M}}_{\nb}^d(X)^*}}
\newcommand{\TRMck}{T_{\mathbb{R}_{\tau}\overline{\mathcal{M}}_{k}^d(X)^*}}
\newcommand{\TRMcl}{T_{\mathbb{R}_{\tau}\overline{\mathcal{M}}_{\nb+\ell}^d(X)^*}}
\newcommand{\TRMco}{T_{\mathbb{R}_{\tau}\overline{\mathcal{M}}_{0}^d(X)^*}}
\newcommand{\mapg}{(C,\underline{z},u)}
\newcommand{\maprg}{(C^1 \cup C^2,\underline{z},u)}
\newcommand{\mapr}{(C_{\star},\underline{z}^{\star},{u}_{\star})}
\newcommand{\map}{(C,\underline{z},u)}
\newcommand{\mapd}{(C_0,\underline{z}^0,{u}_0)}
\newcommand{\mapdtd}{(C_0,\underline{\tilde{z}}^0,{u}_0)}
\newcommand{\mapa}{(C_1,\underline{z}^1,{u}_1)}
\newcommand{\mapatd}{(C_1,\underline{\tilde{z}}^1,{u}_1)}
\newcommand{\mapt}{(C_t,\underline{z}^t,{u}_t)}
\newcommand{\mapttd}{(C_t,\underline{\tilde{z}}^t,{u}_t)}
\newcommand{\maprtd}{(C_{\star},\underline{\tilde{z}}^{\star},{u}_{\star})}
\newcommand{\TRMmc}{T_{\mapg}\RMc}%
\newcommand{\TRMm}{T_{\map}\RMc}
\newcommand{\TRMd}{T_{\gamma(0)}\RMc}
\newcommand{\TRMdtd}{T_{\tilde{\gamma}(0)}\RMc}
\newcommand{\TRMa}{T_{\gamma(1)}\RMc}
\newcommand{\TRMatd}{T_{\tilde{\gamma}(1)}\RMc}
\newcommand{\basm}{(e_1,\dots,e_{k_d},f_1,\dots,f_{\nb})}
\newcommand{\basd}{(e_1^0,\dots,e_{k_d}^0,f_1^0,\dots,f_{\nb}^0)}
\newcommand{\basdtd}{(e_1^0,\dots,e_{k_d}^0,\tilde{f}_1^0,\dots,\tilde{f}_{\nb}^0)}
\newcommand{\basa}{(e_1^1,\dots,e_{k_d}^1,f_1^1,\dots,f_{\nb}^1)}
\newcommand{\bast}{(e_1^t,\dots,e_{k_d}^t,\tilde{f}_1^t,\dots,\tilde{f}_{\nb}^t)}
\newcommand{\basttd}{(\tilde{e}_1^t,\dots,\tilde{e}_{k_d}^t,\tilde{f}_1^t,\dots,\tilde{f}_{\nb}^t)}
\DeclareMathOperator{\coker}{coker}
\DeclareMathOperator{\Ker}{Ker}
\DeclareMathOperator{\Fix}{fix}
\DeclareMathOperator{\codim}{codim}
\theoremstyle{plain}
\newtheorem{theo}{Théorème}
\newtheorem*{theo*}{Théorème}
\newtheorem{lemm}{Lemme}
\newtheorem*{coro}{Corollaire}
\newtheorem{prop}{Proposition}
\newtheorem*{prop*}{Proposition}
\theoremstyle{definition}
\newtheorem{defi}{Définition}
\newtheorem*{nota}{Notation}
\theoremstyle{remark}
\newtheorem*{rema}{Remarque}
\newcommand\@makefntextsans[1]{%
    \parindent 0em%
    \noindent%
    \hb@xt@0em{\hss}%
    #1}
\def\footnotetextsans{%
     \@ifnextchar [\@xfootnotenextsans%
       {\@footnotetextsans}}
\def\@xfootnotenextsans[#1]{%
  \begingroup%
     \csname c@\@mpfn\endcsname #1\relax%
  \endgroup%
  \@footnotetextsans}
\long\def\@footnotetextsans#1{\insert\footins{%
    \reset@font\footnotesize%
    \interlinepenalty\interfootnotelinepenalty%
    \splittopskip\footnotesep%
    \splitmaxdepth \dp\strutbox \floatingpenalty \@MM%
    \hsize\columnwidth \@parboxrestore%
    \color@begingroup%
      \@makefntextsans{%
        \rule\z@\footnotesep\ignorespaces#1\@finalstrut\strutbox}
    \color@endgroup}}
\begin{document}

\author{Nicolas Puignau}

\title{Première classe de Stiefel-Whitney des espaces d'applications stables réelles en genre zéro vers une surface convexe}
\date{}
\selectlanguage{francais}
\maketitle

\selectlanguage{english}
\begin{abstract}
Let $(X,c_X)$ be a convex projective surface equipped with a real structure. The space of stable maps $\overline{\mathcal{M}}_{0,k}(X,d)$ carries different real structures induced by $c_X$ and any order two element $\tau$ of permutation group $S_k$ acting on marked points. Each corresponding real part $\R_{\tau}\overline{\mathcal{M}}_{0,k}(X,d)$  is a real normal projective variety. As the singular locus is of codimension bigger than two, these spaces thus carry a first Stiefel-Whitney class for which we determine a representative in the case $k=c_1(X)d-1$ where $c_1(X)$ is the first Chern class of $X$. Namely, we give a homological description of these classes in term of the real part of boundary divisors of the space of stable maps.
\end{abstract}
\footnotetextsans{\noindent Keywords: moduli spaces, rational curves, real enumerative geometry.}
\footnotetextsans{AMS Classification: 14F25, 14N35, 14P25, 53B99.}

\setcounter{tocdepth}{2}
\setcounter{secnumdepth}{4}

\selectlanguage{francais}
\section*{Introduction}

Soient une surface projective complexe $X$, une classe d'homologie $d$ dans $H_2(X,\Z)$ et deux entiers naturels $g$ et $k$. L'espace des applications stables $\overline{\mathcal{M}}_{g,k}(X,d)$ est un espace de modules pour les courbes de genre $g$ réalisant la classe $d$ dans $X$ et munies de $k$ points marqués. Ces espaces constituent le cadre de la théorie de Gromov-Witten et sont par conséquent un objet fondamental de géométrie énumérative. Dans le cas du genre zéro $\overline{\mathcal{M}}_{0,k}(X,d)$ possède des propriétés remarquables, en particulier lorsque $X$ est convexe (voir p. ex. \cite{B-M,F-P,W3}). Une variété projective $X$ est \emph{convexe} lorsqu'elle est lisse et que pour tout morphisme holomorphe $u:\CP^1 \to X$ on a $H^1(\CP^1,u^*T_X)=0$.\\

{\it Dans ce qui suit nous nous intéressons exclusivement aux courbes de genre zéro. Aussi, nous omettrons la mention $g=0$ et $\overline{\mathcal{M}}_{0,k}(X,d)$ sera noté $\overline{\mathcal{M}}_{k}^d(X)$}.\\

Supposons $X$ convexe (par exemple $\CP^2$ ou $\CP^1 \times \CP^1$) alors l'espace des applications stables $\overline{\mathcal{M}}_{k}^d(X)$ est une variété projective normale avec singularités de type orbivariété. Une orbivariété est localement le quotient d'une variété lisse sous l'action d'un groupe fini. Une application stable est une classe d'isomorphisme pour les paramétrages d'une courbe munie de points marqués ; elle est associée à la donnée $(C,z_1,\dots,z_k,u)$ où $C$ est une surface de Riemann nodale de genre zéro appelée \emph{source}, $(z_1,\dots,z_k)$ un $k$-uplet de points non singuliers de $C$ et $u : C \to X$ un morphisme holomorphe dont l'image réalise la classe $d$ (cf. \cite{F-P}). Lorsque $u$ n'est pas un revêtement multiple, on dit que l'application stable est simple. On note $\Mck^*$ le lieu des applications stables simples, c'est un ouvert dense de $\Mck$ contenu dans la partie lisse. On note $\Mk$ le lieu des applications stables dont la source est $\CP^1$, c'est un ouvert dense de $\Mck$. La dimension de $\overline{\mathcal{M}}_{k}^d(X)$ est $c_1(X)d-1+k$, où $c_1(X)$ désigne la première classe de Chern de $X$. Le morphisme d'évaluation $ev_k^d:(C,z_1,\dots,z_k,u) \in \Mck \mapsto (u(z_1),\dots,u(z_k)) \in X^k$ est un morphisme entre variétés projectives de même dimensions lorsque $k=c_1(X)d-1$. On note $\nb=c_1(X)d-1$.\\
Supposons que $X$ soit équipée d'une structure réelle (une involution antiholomorphe) $c_X:X \to X$ et que $d$ vérifie ${c_X}_*(d)=-d$. Soit $\tau$ un élément d'ordre deux du groupe des permutations à $k$ éléments ; il définit une structure réelle $c_{\tau}:(x_1,\dots,x_k) \mapsto \left(c_X(x_{\tau(1)}),\dots,c_X(x_{\tau(k)})\right)$ sur $X^k$. De même on obtient sur $\overline{\mathcal{M}}_{k}^d(X)$ une structure réelle $c_{\mathcal{M},\tau}$ induite par $(z_1,\dots,z_k,u) \in  (\CP^1)^{k}\times \Mor_d(X) \mapsto \left(conj(z_{\tau(1)}),\dots,conj(z_{\tau(k)},c_X\circ u\circ conj)\right) \in (\CP^1)^{k}\times \Mor_d(X)$ où $conj$ est la conjugaison complexe standard sur $\CP^1$ et $\Mor_d(X)$ l'ensemble des morphismes holomorphes $u:\CP^1 \to X$ réalisant à l'image la classe $d$ (voir \ref{strucreelle}). Éventuellement, $\tau$ peut être l'identité. On note respectivement $\R_{\tau}X^{k}$ et $\RMck$ les parties réelles (le lieu fixe) de $c_{\tau}$ et $c_{\mathcal{M},\tau}$ puis $\RMck^*$ la partie réelle de $\Mck^*$. Le morphisme d'évaluation se restreint en un morphisme réel $\Revk:\RMck \to \RX^{k}$ qui est surjectif lorsque $k$ est égal à $\nb$.\\ 
L'espace $\RMc$ est une variété projective réelle normale, en particulier le lieu singulier est de codimension au moins deux. Cet espace possède donc une première classe de Stiefel-Whitney $w_1(\RMc)$ et un morphisme de dualité au premier grade $H^1(\RMc,\Z/2\Z) \to H_{2\nb-1}(\RMc,\Z/2\Z)$. Dans \cite{W3} Welschinger détermine une collection de sous-variétés dans $\RMc$ aui contienent un représentant pour le dual de la première classe de Stiefel-Whitney. Plus précisément, les parties réelles des composantes irréductibles de la frontière $\Mc \setminus \M$ qui sont écrasées (de codimension au moins deux à l'image) par le morphisme d'évaluation réel contiennent un représentant dual de $w_1(\RMc^*)$. La frontière de l'espace des modules est un diviseur qui correspond au lieu des applications stables dont la source est une courbe réductible. On note $red^d=\{d',d'' \in H_2(X,\Z) | d'+d''=d\}$. Pour un élément $d' \in red^d$ et un entier $k' \leq \nb$, désignons par $\RKd$ la fermeture du lieu des applications stables dans $\RMc^*$ ayant comme source l'union de deux droites projectives $\CP^1 \cup_{\xi} \CP^1$ sécantes en un point $\xi$ et dont une des branches contient $k'$ points marqués et réalise la classe $d'$ dans $X$ (on suppose $k'>k_{d'}$ ou bien $k' \geq 2$ si $d'=0$ pour des raisons de stabilité). Enfin, pour $D$ une classe d'homologie de codimension un, on désigne par $D^{\vee}$ son dual dans $H^1(\RMck,\Z/2\Z)$. La proposition $4.5$ de \cite{W3} peut se ramener à la suivante (voir partie \ref{prelim}).

\begin{prop*}[Welschinger]
La première classe de Stiefel-Whitney de la partie réelle $\RMc^*$ de $\Mc^*$ s'écrit
$$w_1(\RMc^*)=(\Rev)^*[w_1(\RX^{\nb})]+\sum_{\substack{d' \in red^d\\k_{d'}+1<k'\leq \nb}}\epsilon_{d',k'}.[\RKd]^{\vee}$$
où $\epsilon_{d',k'} \in \{0,1\}$. On convient que $\R\mathcal{K}_{0,1}=\emptyset$.
\end{prop*}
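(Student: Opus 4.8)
Le plan est de déduire l'énoncé de la proposition $4.5$ de \cite{W3} par un travail de traduction et un comptage de dimensions (détaillé en partie \ref{prelim}). Sur le lieu lisse $\RMc^*$ de $\Mc^*$, variété différentiable de dimension $2\nb$ au même titre que $\RX^{\nb}$, le jacobien du morphisme réel $\Rev$ fournit une section du fibré en droites $\det T^*\RMc^* \otimes (\Rev)^*\det T\RX^{\nb}$, dont la première classe de Stiefel-Whitney vaut $w_1(\RMc^*)+(\Rev)^*[w_1(\RX^{\nb})]$ modulo $2$ ; elle est donc duale de la partie de codimension un du lieu d'annulation de ce jacobien. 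La proposition $4.5$ de \cite{W3} identifie cette classe, modulo $2$, à la somme des duaux des parties réelles des composantes irréductibles du bord $\Mc\setminus\M$ écrasées par $\Rev$, d'où une égalité $w_1(\RMc^*)=(\Rev)^*[w_1(\RX^{\nb})]+\sum_D \epsilon_D.[\R D]^{\vee}$ ne portant que sur ces composantes, avec $\epsilon_D\in\{0,1\}$ (ce sont des multiplicités modulo $2$). Il reste alors (i) à décrire les composantes irréductibles du bord, (ii) à reconnaître lesquelles sont écrasées, et (iii) à écarter les cas instables.

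Pour (i), comme $X$ est convexe les composantes irréductibles de $\Mc\setminus\M$ sont, d'après la description des applications stables de source réductible (cf. \cite{F-P} et la partie \ref{prelim}), exactement les $\Kd$ pour $d'\in\red$ et $k'\leq\nb$ soumis à la condition de stabilité, l'échange des deux branches identifiant $\Kd$ et $\mathcal{K}^d_{d-d',\nb-k'}$. Pour (ii), sur $\RKd$ les $k'$ points marqués de la branche de degré $d'$ appartiennent à une courbe rationnelle de degré $d'$ de $X$ ; lorsque $d'\neq 0$ et $k'\geq k_{d'}$, l'image de $ev_{k'}^{d'}:\overline{\mathcal{M}}_{k'}^{d'}(X)\to X^{k'}$ est de dimension $c_1(X)d'+k'-1$, de sorte que cette contrainte y est de codimension $k'-k_{d'}$ (pour $d'=0$ les $k'$ points coïncident et la codimension vaut $2(k'-1)$). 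Par suite $\Rev(\RKd)$ est de codimension au moins $\max\bigl(k'-k_{d'},\,(\nb-k')-k_{d-d'}\bigr)$ dans $\RX^{\nb}$, donc au moins $2$ — c'est-à-dire $\RKd$ est écrasée — dès que $k'\geq k_{d'}+2$. Comme les inégalités $k'>k_{d'}+1$ et $\nb-k'>k_{d-d'}+1$ sont incompatibles (leur somme imposerait $\nb>k_{d'}+k_{d-d'}+2=c_1(X)d=\nb+1$), chaque composante écrasée est indexée de manière unique par un couple $(d',k')$ avec $d'\in\red$ et $k_{d'}+1<k'\leq\nb$ ; par la proposition $4.5$ de \cite{W3} les composantes non écrasées ont coefficient nul, ce qui donne exactement la somme de l'énoncé. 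Pour (iii), $\mathcal{K}^d_{0,1}$ est vide — un $\CP^1$ contracté ne portant que le n{\oe}ud et un unique point marqué est instable —, d'où la convention $\R\mathcal{K}_{0,1}=\emptyset$.

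Le point délicat est double. D'une part, il faut rendre rigoureux le comptage de (ii), notamment la finitude générique de $ev_{k'}^{d'}$ sur son image lorsque $k'\geq k_{d'}$. D'autre part, il faut importer proprement \cite[Prop. $4.5$]{W3} : sa démonstration doit contrôler la ramification de $\Rev$ au-dessus du lieu intérieur $\RM$ — en particulier le long de lieux de codimension un formés d'applications dont la source n'est pas immergée (courbes cuspidales) — et montrer que la classe correspondante se récrit modulo $2$ au moyen des diviseurs de bord et des images réciproques par les $ev_j$ de classes de $\mathrm{Pic}(X)$, de sorte qu'elle se reporte sur les termes $[\RKd]^{\vee}$ et $(\Rev)^*[w_1(\RX^{\nb})]$ ; c'est là que l'hypothèse de convexité $H^1(\CP^1,u^*T_X)=0$ est essentielle.
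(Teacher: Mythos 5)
Votre d\'emonstration suit pour l'essentiel la m\^eme route que celle du texte~: on part de la proposition $4.5$ de \cite{W3} (le corollaire \ref{W3} ci-dessous), qui exprime $w_1(\RMc)-(\Rev)^*w_1(\RX^{\nb})$ comme somme des duaux des parties r\'eelles des composantes irr\'eductibles de la fronti\`ere \'ecras\'ees par le morphisme d'\'evaluation, puis on identifie ces composantes par un comptage de dimensions (c'est la proposition \ref{imcodim1}) et on normalise l'indexation au moyen de l'\'egalit\'e $\Kd=\mathcal{K}^d_{d-d',\nb-k'}$ (propositions \ref{decomp frontiere} et \ref{indept}). Votre lecture du jacobien de $\Rev$ comme section de $\det T^*\otimes(\Rev)^*\det T$ est une reformulation fid\`ele de l'argument de \cite{W3}.

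Deux points demandent toutefois \`a \^etre compl\'et\'es. D'abord, votre comptage ne fournit qu'une \emph{minoration} de la codimension de l'image, donc seulement l'implication \og $k'\geq k_{d'}+2$ entra\^{\i}ne \'ecras\'ee \fg{}~; or, pour restreindre la somme aux indices $k'>k_{d'}+1$, c'est l'implication r\'eciproque qui est n\'ecessaire, \`a savoir que $\mathcal{K}^d_{d',k_{d'}+1}$ et $\mathcal{K}^d_{d',k_{d'}}$ ne sont \emph{pas} \'ecras\'ees (sinon elles appara\^{\i}traient dans la somme de \cite{W3} avec un coefficient qu'on ne peut exclure a priori). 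Cela requiert la majoration correspondante, c'est-\`a-dire la finitude g\'en\'erique de l'\'evaluation restreinte \`a ces diviseurs, fait \'etabli dans \cite{W1} et que vous signalez comme \og point d\'elicat \fg{} sans le placer au bon endroit de la cha\^{\i}ne logique. Ensuite, pour regrouper les composantes $D(A,B;d_A,d_B)$ de m\^eme calibre sous un unique coefficient $\epsilon_{d',k'}$, il faut justifier que le coefficient fourni par \cite{W3} est constant sur chaque $\Kd$~; le texte le d\'eduit de l'action de $S_{\nb}$ sur l'indexation des points marqu\'es et du fait que la structure r\'eelle $c_{\overline{\mathcal{M}},\tau}$ ne d\'epend que de la classe de conjugaison de $\tau$ (seconde moiti\'e de la d\'emonstration de la proposition \ref{indept}). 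Ce point est absent de votre r\'edaction, et la formule telle que vous l'\'ecrivez, avec un seul $\epsilon_{d',k'}$ devant $[\RKd]^{\vee}$, ne s'en d\'eduit pas sans lui.
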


Nous déterminons exactement quels sont les termes, dans cette somme, qui sont affectés d'un facteur non nul ; c'est-à-dire quelles sont les composantes qui contribuent effectivement à représenter un élément dual pour la première classe de Stiefel-Whitney de la sous-variété lisse $\RMc^* \subset \RMc$. Puisque le lieu singulier est de codimension au moins deux, ceci définit une première classe de Stiefel-Whitney pour $\RMc$. Le résultat obtenu est le suivant.

\begin{theo*}
Soit $X$ une surface projective convexe équipée d'une structure réelle $c_X$, $d$ une classe d'homologie dans $H_2(X,\Z)$ telle que ${c_X}_*(d) =-d$ et $\tau$ une permutation de $\nb=c_1(X)d-1$ éléments d'ordre au plus deux. La première classe de Stiefel-Whitney de la partie réelle $\RMc$ s'écrit
$$w_1(\RMc)=(\Rev)^*w_1(\RX^{\nb})+\sum_{\substack{d' \in \red\\k_{d'}<k'\leq \nb}}\epsilon_{d',k'}.[\RKd]^{\vee}$$
avec $\epsilon_{d',k'}$ appartient à $\{0,1\}$ et où $\epsilon_{d',k'}=1$ si et seulement si $k'-k_{d'}=2 \mod (4)$ ou $k'-k_{d'}=3 \mod (4)$.
\end{theo*}

La démonstration de ce théorème peut se généraliser en toutes dimensions dès que la première classe de Stiefel-Whitney admet ce type de représentant en terme de diviseurs de la frontière (en particulier lorsqu'on sait définir le degré réel du morphisme d'évaluation). Par exemple lorsque $X= \CP^3$ équipé de la conjugaison complexe et que $d$ désigne un degré supérieur à trois nous avons obtenu un résultat similaire pour $w_1(\mathbb{R}\overline{\mathcal{M}}_{2d}^d(\CP^3))$ (cf \cite{these}).\\

{\bf Remerciements.} Une partie de ce travail est contenue dans ma thèse de doctorat. L'étude de ce problème m'a été suggérée par Jean-Yves Welschinger. Je le remercie ainsi que Stepan Yu. Orevkov pour avoir encadré ce travail au cours de nombreuses discussions. Je remercie Viatcheslav Kharlamov et Jean-Claude Sikorav ainsi que le relecteur pour les multiples remarques utiles à la rédaction de ce texte. Enfin, je remercie Antonio D\'iaz-Cano Oca\~na et le département d'algèbre de l'université Complutense de Madrid pour leur grande hospitalité.

\tableofcontents

\section{Préliminaires}\label{prelim}

On appelle \emph{structure réelle} sur une variété complexe la donnée d'un automorphisme involutif antiholomorphe. Le lieu des éléments fixés par une telle application est appelé lieu fixe ou la partie réelle. On considère $(X,c_X)$ une surface convexe $X$ équipée d'une structure réelle $c_X$ dont le lieu fixe $\R X$ est non vide.

\subsection{Espace des applications stables}

Soit $d$ une classe d'homologie dans $H_2(X,\Z)$ réalisable par une courbe rationnelle et telle que ${c_X}_*(d) =-d$. On dira que $d$ est rationnelle réelle. On note $\Mor_d(X)=\{u:\CP^1 \to X\ |\ u_*[\CP^1]=d\}$ l'ensemble des morphismes holomorphes depuis la droite projective $\CP^1$ qui réalisent la classe $d$ dans $X$. Soit $k \geq 0$ un entier, on note $Diag_k=\{(z_1,\dots,z_k) \in (\CP^1)^k\ |\ \exists\ i\neq j : z_i=z_j\}$. On note $Aut(\CP^1)$ les automorphismes holomorphes de la droite projective. Le groupe de M{\oe}bius des automorphismes holomorphes et antiholomorphes de $\CP^1$ agit sur $\left((\CP^1)^k \setminus Diag_k\right) \times \Mor_d(X)$ de la façon suivante
$$\begin{array}[u]{c}\mathcal{M}oeb\times  \left((\CP^1)^k \setminus Diag_k\right) \times \Mor_d(X) \to \left((\CP^1)^k \setminus Diag_k\right) \times \Mor_d(X)\\
(\varphi;z_1,\dots,z_k,u) \longmapsto \left\{\begin{array}{l}(\varphi(z_1),\dots,\varphi(z_k),u\circ\varphi^{-1}) \textrm{ si }\varphi \in Aut(\CP^1)\\
(\varphi(z_1),\dots,\varphi(z_k),c_X\circ u\circ\varphi^{-1}) \textrm{ sinon.}\end{array}\right.\end{array}$$

L'action restreinte du sous-groupe des automorphismes holomorphes de $\CP^1$ définit un espace quotient $\Mk=\frac{\left((\CP^1)^k \setminus Diag_k\right) \times \Mor_d(X)}{Aut(\CP^1)}$ qui est une variété quasi projective. Il en existe une compactification $\Mck$ appelée espace des applications stables (cf. \cite{K-M}). Ce dernier est un espace de modules pour les courbes $k$-pointées de genre zéro réalisant la classe d'homologie $d$ dans $X$ (cf. \cite{F-P}). Un élément de $\Mck$ est une classe d'équivalence définie par la donnée $(C,z_1,\dots,z_k,u)$ d'une courbe nodale $C$ de genre zéro, de $k$ points non singuliers de $C$ tous distincts $(z_1,\dots,z_k)$ et d'un morphisme holomorphe $u : C \to X$ tel que la classe d'homologie à l'image $u_*[C]$ soit $d$. La relation d'équivalence $(C,z_1,\dots,z_k,u) \cong (D,\zeta _1,\dots,\zeta_k,v)$ est donnée par un isomorphisme $\varphi : C \stackrel{\sim}{\rightarrow} D$ qui respecte les points marqués et les applications~: $\varphi(z_i)=\zeta_i$ pour $i \in \{1,\dots,k\}$ et $u=v \circ \varphi$. De plus $(C,z_1,\dots,z_k,u)$ est \emph{stable} c'est-à-dire que son groupe d'automorphisme est fini. La stabilité est équivalente à exiger que les composantes irréductibles de la source $C$ envoyées par une application constante sur la classe nulle possèdent au moins trois points spéciaux : points marqués ou singularités (cf. \cite{F-P}). On note $\Mck^*$ le lieu des applications stables dont le groupe d'automorphisme est trivial ; c'est un ouvert dense de $\Mck$. On note $c_1(X)$ la première classe de Chern du fibré tangent de $X$. On rappelle le résultat suivant.

\begin{prop}[{\it Cf.} \cite{F-P}]\label{Mck}
Avec les notations précédentes,
\begin{enumerate}
\item $\Mck$ est une variété projective normale de pure dimension $$c_1(X)d+k-1\ ;$$
\item $\Mck$ est localement le quotient d'une variété lisse par un groupe fini ;
\item $\Mck^*$ est lisse et est un espace de module fin pour les applications stables sans automorphismes, muni d'une courbe universelle $\overline{\mathcal{U}}_{\nb}^d$.
\end{enumerate}
\end{prop}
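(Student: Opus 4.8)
La d\'emonstration repose sur la th\'eorie des d\'eformations des applications stables et sur la convexit\'e de $X$. Le plan est d'\'etablir d'abord que le champ de modules sous-jacent \`a $\Mck$ est lisse de la dimension annonc\'ee, puis d'en d\'eduire les propri\'et\'es de l'espace grossier. Pour une application stable $\map$, je partirais de la suite exacte de la th\'eorie des d\'eformations reliant d\'eformations et obstructions infinit\'esimales aux groupes $H^i(C,u^*T_X)$ et aux groupes $\mathrm{Ext}^i$ de la courbe point\'ee sous-jacente :
$$0 \to \mathrm{Ext}^0(\Omega_C(\textstyle\sum_i z_i),\mathcal{O}_C) \to H^0(C,u^*T_X) \to \mathrm{Def} \to \mathrm{Ext}^1(\Omega_C(\textstyle\sum_i z_i),\mathcal{O}_C) \to H^1(C,u^*T_X) \to \mathrm{Obs} \to 0.$$
Ici $\mathrm{Ext}^0$ est l'alg\`ebre de Lie des automorphismes de la courbe point\'ee (finie par stabilit\'e, donc nulle sur $\Mck^*$), $\mathrm{Ext}^1 = T_{[C,\underline{z}]}\overline{\mathcal{M}}_{0,k}$ param\`etre les d\'eformations de la courbe nodale point\'ee, et la suite montre que l'obstruction $\mathrm{Obs}$ est un quotient de $H^1(C,u^*T_X)$. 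Tout repose donc sur l'annulation de ce dernier groupe.

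\emph{C'est le c\oe ur de la preuve : l'annulation de $H^1(C,u^*T_X)$.} Je montrerais d'abord que la convexit\'e entra\^\i ne que $u^*T_X$ est engendr\'e par ses sections globales pour tout morphisme $u:\CP^1\to X$. En effet, si $u^*T_X \cong \bigoplus_j \OP(a_j)$, en pr\'ecomposant $u$ par un rev\^etement double $\phi:\CP^1\to\CP^1$ on obtient $(u\circ\phi)^*T_X\cong\bigoplus_j\OP(2a_j)$, dont l'annulation de $H^1$, \`a nouveau garantie par convexit\'e, force $2a_j\geq -1$, soit $a_j\geq 0$. Pour une source nodale connexe de genre arithm\'etique nul $C=\bigcup_i C_i$ (un arbre de droites $\CP^1$), je proc\'ederais par r\'ecurrence sur le nombre de composantes \`a l'aide de la suite de normalisation
$$0 \to u^*T_X \to \nu_*\nu^* u^*T_X \to \bigoplus_{\xi} (u^*T_X)|_{\xi} \to 0,$$
la somme portant sur les n\oe uds $\xi$ de $C$. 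Comme chaque $H^1(C_i,u^*T_X|_{C_i})$ s'annule (cas $\CP^1$, ou fibr\'e trivial si $u|_{C_i}$ est constante), la suite exacte longue donne $H^1(C,u^*T_X)=\coker\big(\bigoplus_i H^0(C_i,u^*T_X|_{C_i}) \longrightarrow \bigoplus_{\xi} (u^*T_X)|_\xi\big)$. En retirant une composante terminale $C_0$ (une feuille de l'arbre) rattach\'ee en un unique n\oe ud $\xi_0$, l'hypoth\`ese de r\'ecurrence contr\^ole tous les autres n\oe uds ; il reste \`a ajuster la valeur en $\xi_0$, ce que permet la surjectivit\'e de l'\'evaluation $H^0(C_0,u^*T_X|_{C_0})\to (u^*T_X)|_{\xi_0}$ issue de l'engendrement global. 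C'est l'\'etape la plus d\'elicate : elle exige pr\'ecis\'ement que la convexit\'e soit disponible pour \emph{tous} les morphismes, y compris les rev\^etements multiples, afin d'assurer la positivit\'e des sommants et donc la surjectivit\'e aux n\oe uds.

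\emph{Conclusion.} L'annulation de $H^1(C,u^*T_X)$ rend l'obstruction nulle : le champ de modules est donc lisse, et un calcul de caract\'eristiques d'Euler (Riemann-Roch donne $\chi(u^*T_X)=c_1(X)d+2$, l'espace $\overline{\mathcal{M}}_{0,k}$ des courbes $k$-point\'ees de genre z\'ero \'etant de dimension $k-3$) fournit la dimension attendue $c_1(X)d+k-1$, d'o\`u la partie dimensionnelle de (1). Localement pour la topologie \'etale, le champ s'\'ecrit $[V/G]$ avec $V$ lisse et $G=\mathrm{Aut}\map$ fini ; l'espace grossier $\Mck$ y est le quotient $V/G$, qui est normal (quotient d'une vari\'et\'e lisse par un groupe fini) et pr\'esente des singularit\'es de type orbivari\'et\'e, ce qui \'etablit (1) et (2), la projectivit\'e provenant de la construction de Kontsevich (cf. \cite{K-M,F-P}). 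Enfin, sur le lieu $\Mck^*$ o\`u le groupe d'automorphismes est trivial, le champ est repr\'esentable par une vari\'et\'e lisse qui repr\'esente le foncteur de modules : c'est un espace de modules fin, muni de la courbe universelle $\overline{\mathcal{U}}_{\nb}^d$, d'o\`u (3).
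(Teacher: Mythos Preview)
Your proposal is correct and faithfully reconstructs the standard argument from \cite{F-P}, which is precisely what the paper invokes: the proposition is stated with a reference to Fulton--Pandharipande and is not proved in the paper itself (the only comment following it is that convexity ensures $\Mor_d(X)$ is smooth). Your deformation-theoretic sketch, including the double-cover trick to force global generation of $u^*T_X$ and the induction on the tree of components via the normalisation sequence, is exactly the approach of \cite{F-P}, so there is nothing to compare.
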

Le second énoncé indique que $\Mck$ est une orbivariété. C'est la convexité de $X$ qui assure que $\Mor_d(X)$ est une variété lisse. On note $T_{\Mck^*}$ le fibré tangent de la variété $\Mck^*$.

\subsubsection{Structures réelles}\label{strucreelle}

L'action résiduelle du quotient ${\mathcal{M}oeb}/{Aut(\CP^1)} \cong \Z/2\Z$ définit une structure réelle $c_{\mathcal{M}}$ sur $\Mk$ qui s'étend en une structure réelle $c_{\overline{\mathcal{M}}}$ sur $\Mck$ (cf. \cite{W3}). Le lieu fixe de $c_{\overline{\mathcal{M}}}$ est un espace de module $\R\Mck$ qui compactifie un espace de paramètres pour les courbes réelles de genre zéro et de classe $d$ dans $X$  munies de $k$ points marqués dans $\R X$ (cf. \cite{these}). Une courbe est réelle lorsqu'elle est invariante sous l'action de $c_X$. Lorsque $k=0$ on obtient une structure réelle sur $\Mor_d(X)$ dont on note $\RMor_d(X)$ la partie réelle. Lorsque $k\neq 0$ le groupe des permutations de $k$ éléments $S_k$ agit sur l'indexation des $k$ points marqués par$$\begin{array}{ccc}S_k ×\Mck& \to & \Mck \\(\sigma;C,z_1,\dots,z_k,u)& \mapsto & (C,z_{\sigma(1)},\dots,z_{\sigma(k)},u).\end{array}$$ Soit $\tau$ un élément d'ordre deux de $S_k$. L'action combinée de $\tau$ et $c_{\overline{\mathcal{M}}}$ induit une structure réelle $c_{\overline{\mathcal{M}},{\tau}}$ sur $\Mck$ dont on note $\RMck$ la partie réelle. Cette dernière compactifie un espace de paramètres pour les courbes réelles de genre zéro et de classe $d$ dans $X$ avec des points marqués dans $\R X$ (d'indexations invariantes par $\tau$) et des paires de points conjugués pour $c_X$ (d'indexations permutées par $\tau$). Ces structures réelles ne se distinguent que par la classe de conjugaison de $\tau \in S_k$ (cf. \cite{W3}).
\begin{prop}[{\it Cf.} \cite{W3}]\label{RMc}Soit $\tau \in S_k$ une permutation d'ordre au plus deux,
$\RMck$ est une variété réelle projective et normale de dimension $c_1(X)d+k-1$.
\end{prop}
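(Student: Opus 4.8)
\noindent
Le plan est de traiter s\'epar\'ement la projectivit\'e, la dimension et la normalit\'e, cette derni\`ere \'etant le c\oe{}ur de l'\'enonc\'e. D'abord la projectivit\'e~: l'involution antiholomorphe $c_{\overline{\mathcal{M}},\tau}$, issue de l'action r\'esiduelle de $\mathcal{M}oeb/Aut(\CP^1)\cong\Z/2\Z$ combin\'ee \`a $\tau$, est une structure r\'eelle au sens alg\'ebrique~: elle exhibe $\Mck$ comme la complexification d'un $\R$-sch\'ema projectif $M$, avec $\Mck=M_{\C}$ et $\RMck=M(\R)$ le lieu de ses points r\'eels. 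Comme $\Mck$ est projective (Proposition \ref{Mck}), $M$, et donc $\RMck$, sont projectifs r\'eels.

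\noindent
Ensuite la normalit\'e du sch\'ema. Le sch\'ema $M_{\C}=\Mck$ est normal (Proposition \ref{Mck}). La normalit\'e descend le long de l'extension fid\`element plate $\R\hookrightarrow\C$~: $M_{\C}$ normal (donc $M$ g\'eom\'etriquement normal) entra\^ine $M$ normal. En particulier le lieu singulier $\operatorname{Sing}(M)_{\C}=\operatorname{Sing}(\Mck)$ est de codimension complexe au moins deux dans $\Mck$. Pour la dimension, j'invoquerais la description modulaire de la partie \ref{strucreelle}~: $\RMck$ compactifie l'espace des param\`etres des courbes r\'eelles rationnelles de classe $d$ munies de leurs points marqu\'es, qui est lisse de dimension r\'eelle $c_1(X)d+k-1$ (lieu fixe de l'involution antiholomorphe sur $\Mk$). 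Comme $\R X\neq\emptyset$ et $d$ est rationnelle r\'eelle, la partie $\RMck\cap\Mck^*$ est non vide~; en tout point r\'eel o\`u $\Mck$ est lisse elle est une sous-vari\'et\'e totalement r\'eelle de dimension r\'eelle $\dim_{\C}\Mck=c_1(X)d+k-1$. Ainsi $\RMck$ est l'adh\'erence de cette partie, pure de dimension $c_1(X)d+k-1$.

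\noindent
Je conclurais alors par un simple d\'ecompte de dimension. Le lieu singulier de $\RMck$ est contenu dans $\operatorname{Sing}(\Mck)\cap\RMck$. Or une sous-vari\'et\'e complexe de codimension complexe au moins deux a un lieu r\'eel de dimension r\'eelle au plus $\dim_{\C}\operatorname{Sing}(\Mck)\leq (c_1(X)d+k-1)-2$~; puisque $\RMck$ est pure de dimension $c_1(X)d+k-1$, ce lieu y est de codimension r\'eelle au moins deux. Le sch\'ema normal $M$ et ce contr\^ole du lieu singulier fournissent la normalit\'e annonc\'ee de $\RMck$.

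\noindent
L'obstacle principal sera le tout dernier point. Les singularit\'es d'orbivari\'et\'e de $\Mck$ (quotients par les groupes finis d'automorphismes des applications stables) pourraient \emph{a priori} cr\'eer un ph\'enom\`ene de codimension r\'eelle un dans le lieu fixe --- typiquement un bord, qui appara\^itrait si un automorphisme agissait comme une r\'eflexion r\'eelle sur l'espace des d\'eformations, produisant un quotient na\"if $\R V/\operatorname{Aut}$ \`a bord. C'est pr\'ecis\'ement ce qu'exclut la normalit\'e de $M$~: les points r\'eels suppl\'ementaires, issus des orbites globalement invariantes par $c_{\overline{\mathcal{M}},\tau}$ mais non fix\'ees point par point (o\`u le rel\`evement $\tilde{\sigma}$ envoie $v$ sur $g\cdot v$ avec $g\neq\mathrm{id}$), comblent ce bord, de sorte que $\RMck=M(\R)$ est bien normal et non une vari\'et\'e \`a bord. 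La v\'erification soigneuse repose sur des cartes d'orbivari\'et\'e compatibles avec $c_{\overline{\mathcal{M}},\tau}$, construites \`a partir de l'action du groupe de M{\oe}bius, suivant \cite{W3}.
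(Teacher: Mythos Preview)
Le papier ne donne pas de d\'emonstration de cet \'enonc\'e~: la proposition est simplement attribu\'ee \`a \cite{W3} via la mention \og Cf.\fg\ et aucune preuve ne suit. Il n'y a donc rien \`a comparer directement.

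Ton approche par descente galoisienne --- l'involution antiholomorphe $c_{\overline{\mathcal{M}},\tau}$ d\'efinit une $\R$-forme $M$ de $\Mck$, la projectivit\'e et la normalit\'e de $\Mck$ descendent \`a $M$ le long de $\R\hookrightarrow\C$ --- est correcte et c'est la mani\`ere naturelle de proc\'eder. Deux points restent cependant \`a justifier. Premi\`erement, l'assertion \og $\RMck\cap\Mck^*$ est non vide car $\R X\neq\emptyset$ et $d$ est rationnelle r\'eelle\fg\ n'est pas \'evidente~: rien ne garantit a priori l'existence d'une courbe rationnelle \emph{r\'eelle} simple dans la classe $d$. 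Deuxi\`emement, et surtout, l'affirmation \og $\RMck$ est l'adh\'erence de cette partie, pure de dimension $c_1(X)d+k-1$\fg\ demande plus~: le lieu r\'eel d'un $\R$-sch\'ema normal n'est pas en g\'en\'eral pur, et des points r\'eels isol\'es ou de basse dimension peuvent se cacher dans le lieu singulier sans \^etre limites de points r\'eels lisses. Ton contr\^ole $\dim_\R(\operatorname{Sing}(\Mck)\cap\RMck)\le c_1(X)d+k-1-2$ borne la dimension de ce r\'esidu mais ne montre pas qu'il est dans l'adh\'erence de la partie lisse. L'argument propre passe par les cartes d'orbivari\'et\'e~: en tout point r\'eel $p$, un voisinage est $V/G$ avec $V$ lisse, la structure r\'eelle se rel\`eve \`a $V$ (\'eventuellement tordue par un \'el\'ement de $G$), et le lieu fixe dans $V$ est de dimension r\'eelle $c_1(X)d+k-1$ --- c'est exactement le ph\'enom\`ene que tu \'evoques dans ton dernier paragraphe, mais il faudrait le d\'emontrer plut\^ot que le d\'ecrire comme un \og obstacle\fg\ r\'esolu ailleurs.
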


\subsection{Frontière}\label{diviseurs}

La frontière de $\Mck$, c'est-à-dire $\Mck \setminus \Mk$ est le lieu des application stables dont la source est réductible. C'est une union de diviseurs dans $\Mck$ (cf. \cite{F-P}). Soit $(A,B;d_A,d_B)$ un quadruplet qui vérifie~:
\begin{enumerate}[\indent 1.]
\item $A\sqcup B$ est une partition de $\{1,\dots,k\}$ ;
\item $d_A+d_B=d \in H_2(X,\Z)$ ;
\item si $d_A=0$ (resp. $d_B=0$), alors $|A|\geq 2$ (resp. $|B| \geq 2$).
\end{enumerate}
Il existe un diviseur $D(A,B;d_A,d_B)$ de $\Mck$ (cf. \cite{F-P}) qui est le lieu des applications stables $\mapg$ vérifiant~:
\begin{enumerate}[\indent a.]
\item $C=C_A\cup C_B$ est l'union de deux courbes $C_A$ et $C_B$ de genre zéro sécantes en un point nodal $\xi$ ;
\item les points marqués indexés par $A$ (resp. $B$) sont dans $C_A$ (resp. $C_B$) ;
\item $u_*[C_A] = d_A$ et $u_*[C_B]=d_B$, ce que l'on écrira $u \in \Mor_{d_A+d_B}(X)$.
\end{enumerate}
On convient que lorsque ($d_A=0$, $|A|<2$) ou ($d_B=0$, $|B|<2$), on note par extension $D(A,B;d_A,d_B)$ pour désigner l'ensemble vide (conditions de stabilité). Lorsque $X=\CP^2$ ou $X=\CP^1\times \CP^1$, les diviseurs de la frontière sont des variétés irréductibles.
\begin{figure}[htb]
\begin{center}
\input{bord.pstex_t}
\caption{Un élément générique du diviseur $D(\{1,2,5\},\{3,4\};d_A,d_B)$}
\end{center}
\end{figure}\\
Pour une classe rationnelle $d \in H_2(X,\Z)$, l'ensemble des éléments $\delta$, rationnels ou nuls et tels que $d-\delta$ soit rationnelle ou nulle, est noté $red^d \subset H_2(X,\Z)$. Pour un entier $k' \leq k_d$ et $d' \in red^d$, on pose \begin{equation}\Kd=\bigcup_{\substack{|A| = k' \\ d_A=d'}} D(A,B;d_A,d_B).\end{equation} Un point générique de $\Kd$ est une application stable $(C^1 \cup_{\{\xi\}} C^2,u_1 \cup u_2,z)$ telle que $C^i = \CP^1$ pour $i \in \{1,2\}$ et $\#(z \cap C^1)=k'$, $u_1^*(C^1)=d'$. Remarquons que $\R_{\tau}\mathcal{K}_{0,1}=\emptyset$ par stabilité.

\begin{prop}\label{decomp frontiere}
La frontière de $\Mc$ est la réunion $\bigcup_{\substack{d' \in red^d \\ k_{d'} < k' \leq \nb}} \Kd$.
\end{prop}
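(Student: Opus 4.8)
The statement asserts that the boundary $\Mc \setminus \M$, i.e. the locus of stable maps whose source is reducible, decomposes as the union of the $\Kd$ over $d' \in red^d$ with $k_{d'} < k' \le \nb$. My plan is to compare the two natural stratifications of the boundary: the one by the irreducible boundary divisors $D(A,B;d_A,d_B)$ described just above, and the proposed one by the $\Kd$. Since by definition each $\Kd = \bigcup_{|A|=k',\, d_A=d'} D(A,B;d_A,d_B)$, the inclusion $\bigcup \Kd \subseteq \Mc\setminus\M$ is immediate (each $D(A,B;d_A,d_B)$ lies in the boundary, as its generic element has reducible source). Conversely, since $\Mc\setminus\M = \bigcup_{(A,B;d_A,d_B)} D(A,B;d_A,d_B)$ where the union runs over all admissible quadruples, it suffices to show that every admissible quadruple $(A,B;d_A,d_B)$ contributes a divisor $D(A,B;d_A,d_B)$ that appears inside some $\Kd$ with the index constraints $d' \in red^d$ and $k_{d'} < k' \le \nb$.

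First I would fix an admissible quadruple $(A,B;d_A,d_B)$ with $D(A,B;d_A,d_B)\neq\emptyset$. Up to swapping the roles of $A$ and $B$ (which leaves $D(A,B;d_A,d_B)$ unchanged), I may assume $|A| \ge |B|$; since $|A|+|B| = \nb$, this gives $|A| \ge \nb/2$, and in particular $|A| \le \nb$ trivially. Set $d' := d_A$ and $k' := |A|$. By condition 2, $d_A + d_B = d$, so $d' \in red^d$ provided $d_A$ and $d_B = d - d_A$ are each rational or zero; this holds because the divisor $D$ is nonempty, hence there is an actual stable map realising $d_A$ on $C_A$ and $d_B$ on $C_B$, each a genus-zero (possibly constant) map, which forces both classes to be effective rational or zero. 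Thus $d' \in red^d$. It then remains to verify the numerical inequality $k_{d'} < k' \le \nb$, and to handle the degenerate cases where the "large" branch could fail $k' > k_{d'}$.

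The numerical check is the technical heart. Recall $k_{\delta} = c_1(X)\delta - 1$ for $\delta$ rational, and $k_0$ should be read as the stability threshold. I would argue: if $|A| \le k_{d_A}$, then since $k_{d_A} + k_{d_B} = c_1(X)d - 2 = \nb - 1 < \nb = |A| + |B|$, at least one of the two inequalities $|A| > k_{d_A}$ or $|B| > k_{d_B}$ must hold — indeed $|A| \le k_{d_A}$ and $|B| \le k_{d_B}$ together would give $\nb = |A|+|B| \le k_{d_A}+k_{d_B} = \nb - 1$, a contradiction. Hence after possibly swapping $A \leftrightarrow B$ again I can arrange $k' = |A| > k_{d'} = k_{d_A}$, which is exactly the lower constraint, while the upper constraint $k' \le \nb$ is automatic from $k' = |A| \le |A|+|B| = \nb$. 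The stability convention on $d'=0$ (requiring $|A|\ge 2$) matches the convention built into the definition of $\Kd$, so the boundary conventions $D(\cdot)=\emptyset$ and $\R\mathcal{K}_{0,1}=\emptyset$ are compatible. Conclude that $D(A,B;d_A,d_B) \subseteq \Ktd[d'][k']$ for the chosen $(d',k')$ in the allowed range, which finishes the reverse inclusion.

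The main obstacle I anticipate is purely bookkeeping: making the "swap $A$ and $B$" argument airtight when one branch is contracted ($d_A = 0$ or $d_B = 0$), since then $k_{d_A}$ is not literally $c_1(X)\cdot 0 - 1 = -1$ but rather governed by the stability bound $|A| \ge 2$; one must check that the counting inequality $k_{d_A} + k_{d_B} = \nb - 1$ and the resulting dichotomy still force the large branch into the range $k' > k_{d'}$, reading $k_0$ consistently with the convention $\Kd[0][k']$ defined only for $k' \ge 2$. Once the convention is pinned down, the two inclusions combine to give the claimed equality, and the proof is complete.
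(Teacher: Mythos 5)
Your proposal is correct and follows essentially the same route as the paper: both reduce to the covering of the boundary by the divisors $D(A,B;d_A,d_B)$, regroup them into the $\Kd$, and use the key identity $k_{d_A}+k_{d_B}=\nb-1$ (with the convention $k_0=-1$) to show that exactly one of the two branches satisfies $|A|>k_{d_A}$, so each divisor lands in a $\Kd$ with $k_{d'}<k'\le\nb$. The paper additionally notes $\Kd=\mathcal{K}_{d-d',\nb-k'}^{d}$ to phrase this as uniqueness of the writing, but the content is the same as your dichotomy argument.
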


\begin{proof}
La réunion des diviseurs $\bigcup_{\substack{A\sqcup B=\{1,\dots,k\} \\ d_A+d_B=d}} D(A,B;d_A,d_B)$ est un recouvrement de la frontière (cf. \cite{F-P}), donc une simple réécriture montre que $\Mck \setminus \Mk=\bigcup_{\substack{d' \in red^d \\ 0 \leq k' \leq \nb}} \Kd$. Il suffit de remarquer que $\Kd$ est égal à $\mathcal{K}_{d-d',k-k'}^d$. Si on choisit d'imposer la condition $k'>k_{d'}$ on obtient l'unicité de l'écriture puisque des deux choix envisageables pour $A$ et $B$, un et un seul est réalisable de la sorte. En effet $\nb=c_1(X)d-1$ donc si $d'+d''=d$ alors $k_{d'}+k_{d''}=k_{d}-1$. Donc $k'>k_{d'}$ est équivalent à $\nb-k'=k''\leq \nb-k_{d'}-1 =k_{d''}$.
\end{proof}

Soit $\tau$ une permutation d'ordre deux ou l'identité dans $S_k$, on note $\RKk$ (resp. $\RKd$) le lieu de la frontière (resp. de $\Kd$) fixe pour $c_{\overline{\mathcal{M}},\tau}$. Lorsque $k=\nb$, la proposition \ref{decomp frontiere} admet le corollaire suivant.

\begin{coro}\label{decomp reelle frontiere}Pour $\tau$ une permutation d'ordre deux ou l'identité dans $S_{\nb}$, la frontière réelle de $\Mc$ est
$\RK = \bigcup_{\substack{d' \in \red \\ k_{d'} < k' \leq \nb}} \RKd$.
\end{coro}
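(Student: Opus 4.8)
Le plan est d'obtenir ce corollaire comme simple passage au lieu fixe de $c_{\overline{\mathcal{M}},\tau}$ dans la Proposition \ref{decomp frontiere}. Par définition, $\RK$ est le lieu de la frontière $\Kg = \Mc \setminus \M$ fixe par $c_{\overline{\mathcal{M}},\tau}$, et de même $\RKd = \Kd \cap \Fix(c_{\overline{\mathcal{M}},\tau})$. Je partirais donc de l'égalité $\Kg = \bigcup_{d' \in \red,\, k_{d'} < k' \leq \nb} \Kd$ de la Proposition \ref{decomp frontiere} et je l'intersecterais avec $\Fix(c_{\overline{\mathcal{M}},\tau})$, en invoquant la distributivité de l'intersection sur la réunion :
$$\RK = \Fix(c_{\overline{\mathcal{M}},\tau}) \cap \bigcup_{d',k'} \Kd = \bigcup_{d',k'} \bigl( \Fix(c_{\overline{\mathcal{M}},\tau}) \cap \Kd \bigr) = \bigcup_{d',k'} \RKd.$$

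Le seul point à surveiller est que la frontière, ainsi que la famille des strates $\Kd$ figurant dans cette réunion, se comportent bien vis-à-vis de la structure réelle. D'une part, $c_{\overline{\mathcal{M}},\tau}$ est un automorphisme antiholomorphe de $\Mc$ qui envoie manifestement une application stable de source $\CP^1$ sur une application stable de source $\CP^1$ (la courbe conjuguée), donc préserve l'ouvert $\M$, et par suite la frontière $\Kg$. D'autre part, je noterais comment les strates se transforment : en explicitant la définition de $c_{\overline{\mathcal{M}},\tau}$ (conjuguer la courbe source et composer l'application par $c_X$, réindexer les points marqués par $\tau$), on voit que le diviseur $D(A,B;d_A,d_B)$ est envoyé sur $D(\tau(A),\tau(B);-{c_X}_*(d_A),-{c_X}_*(d_B))$, si bien que l'image de $\Kd$ est $\mathcal{K}_{-{c_X}_*(d'),k'}^d$. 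Comme ${c_X}_*(d) = -d$ et ${c_X}^* c_1(X) = -c_1(X)$, on a $-{c_X}_*(d') \in \red$ et $k_{-{c_X}_*(d')} = k_{d'}$ ; ainsi $c_{\overline{\mathcal{M}},\tau}$ ne fait que permuter la famille finie $\{\Kd : d' \in \red,\ k_{d'} < k' \leq \nb\}$, dont la réunion est donc bien invariante, en accord avec le calcul précédent.

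Il n'y a pas ici de véritable obstacle : tout le contenu est dans la Proposition \ref{decomp frontiere}, et le corollaire en est une conséquence formelle dès que l'on sait que la frontière est un sous-ensemble $c_{\overline{\mathcal{M}},\tau}$-invariant. La seule précaution relève de la vérification de routine du paragraphe précédent — en particulier, le fait que les strates $\Kd$ ne soient en général que permutées (et non fixées) par la structure réelle n'altère pas la décomposition de $\RK$, et les conventions rendant certains $\R\mathcal{K}_{0,1} = \emptyset$ éliminent simplement des termes vides de la réunion.
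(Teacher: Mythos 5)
Your proposal is correct and coincides with the paper's (implicit) argument: the paper states this as an immediate corollary of Proposition \ref{decomp frontiere} with no written proof, and since $\RKd$ is by definition the fixed locus of $c_{\overline{\mathcal{M}},\tau}$ in $\Kd$, intersecting the decomposition of the boundary with $\Fix(c_{\overline{\mathcal{M}},\tau})$ is exactly the intended deduction. Your extra verification that the strata are permuted by the real structure is a harmless (and correct) supplement, though the set-theoretic identity $\Fix \cap \bigcup \Kd = \bigcup (\Fix \cap \Kd)$ already suffices on its own.
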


\subsection{Morphismes}

\subsubsection{Morphisme d'évaluation}\label{eval}
Soit $\tau \in S_k$ une permutation d'ordre deux, on définit une structure réelle sur $X^{k}$ par $c_{\tau}:(x_1,\dots,x_k) \mapsto \left(c_X(x_{\tau(1)}),\dots,c_X(x_{\tau(k)})\right)$ dont on note $\RX^k$ la partie réelle.
L'application d'évaluation $ev_k^d:(C,z_1,\dots,z_k,u) \in \Mck \mapsto (u(z_1),\dots,u(z_k)) \in X^k$ est un morphisme entre variétés projectives. Lorsque $k=c_1(X)d-1$ c'est un morphisme surjectif entre variétés de même dimension. On pose $\nb=c_1(X)d-1$\label{nb}. Le morphisme d'évaluation est équivariant sous l'action de $S_k$ et de $\Z/2\Z$, c'est donc un morphisme réel $ev_k^d: (\Mck,c_{\overline{\mathcal{M}},{\tau}}) \to (X^k,c_{\tau})$ entre variétés réelles.
\begin{defi}
On appelle \emph{morphisme d'évaluation réel} la restriction du morphisme d'évaluation $$\begin{array}[c]{cccc} \Revk : & \RMck & \to & \RX^k \\ & (C,z_1,\dots,z_k,u) & \mapsto & (u(z_1),\dots,u(z_k)).\end{array}$$
\end{defi}
Lorsqu'il n'y a pas d'ambiguïté, on note $\underline{z}$ pour un $k$-uplet $(z_1,\dots,z_k) \in C^k$.
Soit $\mapg \in \Mck^*$, la différentielle de $u$ induit un morphisme de faisceaux $0 \rightarrow T_C \xrightarrow{du} u^*T_X$ dont on note $\mathcal{N}_{u}$ le faisceau quotient appelé \emph{faisceau normal} de $u$. Lorsque on a une \emph{immersion}, $\Nu$ est le faisceau d'un fibré en droites sur $\CP^1$. Par la formule d'adjonction on a l'égalité $\label{degnormal}\Nu \cong \OP(c_1(X)d-2)$.

\begin{prop}[{\it Cf.} \cite{W3}]\label{kerev} Soit $\map \in \Mk^*$ un application stable sans automorphisme ; le noyau et le conoyau de la différentielle $d\ev$ en $\map$ sont respectivement
\begin{equation}\ker(d|_{\map}\evk) \cong H^0(\CP^1,\Nu \otimes \OP(-z))\end{equation} et \begin{equation}\coker(d|_{\map}\evk) \cong H^1(\CP^1,\Nu \otimes \OP(-z)).\end{equation}
\end{prop}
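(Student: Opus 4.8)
The plan is to compute the differential $d|_{\map}\evk$ directly from the deformation-theoretic description of the tangent space to $\Mk^*$, and then to read off its kernel and cokernel from two short exact sequences of sheaves on $\CP^1$: the normal sheaf sequence of $u$ and the evaluation sequence at the marked points. The only analytic inputs will be two vanishing statements, both guaranteed by convexity and by the degree of $\Nu$.

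First I would describe the tangent space. Since $\map$ has smooth source $\CP^1$ and trivial automorphism group, an infinitesimal deformation is the data of a variation $\dot z_i\in T_{z_i}\CP^1$ of each marked point together with a variation $\dot u\in H^0(\CP^1,u^*T_X)$ of the map, taken modulo the infinitesimal action of $Aut(\CP^1)$. Thus
$$T_{\map}\Mk^* \cong \Big( \bigoplus_{i=1}^{k} T_{z_i}\CP^1 \oplus H^0(\CP^1,u^*T_X) \Big) \big/ H^0(\CP^1,T_{\CP^1}),$$
where $v\in H^0(\CP^1,T_{\CP^1})$ acts by $v\mapsto (v(z_i),-du(v))_i$, the sign coming from $u\circ\varphi^{-1}$. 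A dimension count using $h^0(T_{\CP^1})=3$ and $h^0(u^*T_X)=c_1(X)d+2$ (the latter from $H^1(\CP^1,u^*T_X)=0$ by convexity) recovers $\dim\Mk^*=c_1(X)d+k-1$. In these coordinates the differential of the evaluation map is $(\dot z_i,\dot u)\mapsto (du_{z_i}(\dot z_i)+\dot u(z_i))_i\in\bigoplus_i T_{u(z_i)}X$, and one checks it annihilates the automorphism directions, hence descends to $T_{\map}\Mk^*$.

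Next I would feed in the two sequences. The normal sheaf sequence $0\to T_{\CP^1}\xrightarrow{du}u^*T_X\xrightarrow{p}\Nu\to0$ gives, since $H^1(\CP^1,T_{\CP^1})=H^1(\CP^1,\OP(2))=0$, a surjection $H^0(u^*T_X)\twoheadrightarrow H^0(\Nu)$. The evaluation sequence $0\to\Nu\otimes\OP(-z)\to\Nu\xrightarrow{ev}\bigoplus_i\Nu_{z_i}\to0$ identifies $H^0(\Nu\otimes\OP(-z))$ with the sections of $\Nu$ vanishing at $\underline z$, and its cokernel with $H^1(\Nu\otimes\OP(-z))$ as soon as $H^1(\CP^1,\Nu)=0$, which holds because $\deg\Nu=c_1(X)d-2\geq-1$. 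Composing $p$ with evaluation at $\underline z$ sends $(\dot z,\dot u)$ to $ev(p(\dot u))$; a tangent vector lies in $\ker(d\evk)$ exactly when $\dot u(z_i)\in\mathrm{im}(du_{z_i})$ for all $i$, i.e. when $p(\dot u)$ vanishes at $\underline z$. This produces a map $\ker(d\evk)\to H^0(\Nu\otimes\OP(-z))$; its surjectivity follows from the surjection above, and its kernel is precisely the image of $H^0(T_{\CP^1})$, which has already been quotiented out of the tangent space. For the cokernel, since $\mathrm{im}(du_{z_i})$ is the kernel of $(u^*T_X)_{z_i}\to\Nu_{z_i}$, projecting the target $\bigoplus_i T_{u(z_i)}X$ onto $\bigoplus_i\Nu_{z_i}$ identifies $\coker(d\evk)$ with $\coker\big(H^0(\Nu)\to\bigoplus_i\Nu_{z_i}\big)$, which is $H^1(\Nu\otimes\OP(-z))$ by the evaluation sequence.

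The main obstacle I expect is book-keeping rather than depth: one must verify that the quotient by the three-dimensional space $H^0(T_{\CP^1})$ of infinitesimal automorphisms exactly cancels the ambiguity in lifting a section of $\Nu$ along $p$, so that the kernel identification has no defect, i.e. that the tangent maps of $\evk$ assemble into the long exact sequence of the normal complex twisted by $\OP(-z)$. A related delicate point is the case where a marked point $z_i$ is a ramification point of $u$: there $du_{z_i}=0$ and $\Nu$ acquires torsion, so the fibrewise argument must be replaced by the sheaf-level sequences above. For this reason I would phrase the entire computation in terms of the sheaf $\Nu$ and its twist $\Nu\otimes\OP(-z)$ rather than fibre by fibre, which keeps the argument valid without assuming $u$ is an immersion.
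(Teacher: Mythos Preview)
The paper does not give its own proof of this proposition: it is stated with the attribution \emph{Cf.}~\cite{W3} and no proof environment follows. So there is no in-paper argument to compare against; the proposition is imported from Welschinger's work.

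Your argument is the standard and correct one. You identify the tangent space to $\Mk^*$ at a point with smooth source as the quotient of $\bigoplus_i T_{z_i}\CP^1 \oplus H^0(u^*T_X)$ by infinitesimal reparametrisations, compute $d\evk$ explicitly, and then extract kernel and cokernel from the long exact sequences attached to the normal sequence $0\to T_{\CP^1}\to u^*T_X\to\Nu\to 0$ and the twist sequence $0\to\Nu(-\underline z)\to\Nu\to\bigoplus_i\Nu|_{z_i}\to 0$. This is exactly how such statements are proved in the literature on stable maps.

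One small remark: you justify $H^1(\CP^1,\Nu)=0$ by the degree bound $\deg\Nu=c_1(X)d-2\ge-1$, which presupposes $\Nu$ is a line bundle, i.e.\ that $u$ is an immersion; yet in the next paragraph you correctly insist on working at the sheaf level to cover ramification points. The cleaner justification, valid uniformly, is the one you have implicitly set up: the long exact sequence of the normal sequence gives a surjection $H^1(u^*T_X)\twoheadrightarrow H^1(\Nu)$, and the left-hand side vanishes by convexity. With that adjustment your treatment is self-contained and needs no immersion hypothesis, which is consistent with how the paper later uses the result (it invokes the proposition on all of $\Mk^*$, while only separately noting that $\Nu\cong\OP(c_1(X)d-2)$ when $u$ is an immersion).
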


Lorsque $k=\nb$, le morphisme d'évaluation réel est un difféomorphisme local au voisinage d'un point régulier. On note $\Reg$ le lieu régulier du morphisme d'évaluation $\ev:\Mc \to X^{\nb}$, c'est un ouvert dense de $\Mc$.

\subsubsection{Morphismes d'oubli}\label{oubli}
Une composante irréductible d'une courbe de genre zéro est appelée \emph{branche}. Soient $l \leq k$ des entiers, il existe un morphisme entre variétés projectives de $\Mck$ vers $\overline{\mathcal{M}}_l^d(X)$ qui \og oublie \fg\ certains points marqués de la source de $\mapg \in \Mck$ puis contracte les branches devenues instables par insuffisance de points marqués (cf. \cite{F-P}).
\begin{defi}
On appelle \emph{morphisme d'oubli} et on note $\ob$ l'application
\begin{equation}\begin{array}{cccc}\ob :&\Mck &\to& \Mco \\& \mapg &\mapsto& (C_{\ob}^{stab},u).\end{array}\end{equation}
Pour $i \in \{1,\dots,k\}$ on note $\ob_i$ le morphisme qui oublie le $i$-ième point marqué $$\begin{array}{cccc}\ob_i :&\Mck &\to& \overline{\mathcal{M}}_{k-1}^d(X) \\& (C,z_1,\dots,z_k,u) &\mapsto& (C_{\ob_i}^{stab},z_1,\dots,\hat{z}_i,\dots,z_k,u).\end{array}$$
où $C_{\ob}^{stab}$ désigne la source éventuellement contractée $C \twoheadrightarrow C_{\ob}^{stab}$ pour stabiliser.
\end{defi}
Lorsque cela est utile (p. ex. en \ref{chemintd}) on spécifie l'espace de départ $\Mck$ d'un morphisme d'oubli par la notation $\ob^k$ ; de même, pour une partie $\{i_0,\dots,i_n\}$ de $\{1,\dots,k\}$, on note $\ob^k_{i_0,\dots,i_n}$ pour l'application composée $\ob^{k-n}_{i_1}\circ\dots\circ\ob^k_{i_0}$ qui consiste à oublier la liste $\{i_0,\dots,i_n\}$ de points marqués.

\begin{lemm}\label{propkero}
Soit $\mapg \in \Mck^*$ tel que $\underline{z}$ soient des points réguliers de $C_{\ob}^{stab}$ ; alors on a une décomposition $\ker d|_{\mapg}\ob = \bigoplus_{i=1}^k \ker d|_{\mapg}\ob_i$ et un isomorphisme $\ker d|_{\mapg}\ob \cong T_{\underline{z}}C^k$.
\end{lemm}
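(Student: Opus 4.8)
Le plan consiste à décrire les morphismes d'oubli $\ob$ et $\ob_i$ au voisinage de $\mapg$ comme des projections de courbes universelles, dont les fibres sont des configurations de points réguliers sur la source, puis à identifier les noyaux des différentielles aux espaces tangents verticaux correspondants. D'abord, j'exploiterais l'hypothèse pour fixer le modèle local : elle signifie que les images des $z_i$ par la contraction $C \twoheadrightarrow C_{\ob}^{stab}$ sont des points lisses deux à deux distincts. Or, d'après le critère de stabilité (cf. \cite{F-P}), une branche de $C$ n'est contractée lors de cette stabilisation que si elle est d'image constante par $u$ et porte moins de trois points spéciaux une fois tous les points marqués effacés. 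Une telle branche porte soit au moins deux points marqués — dont les images coïncideraient alors dans $C_{\ob}^{stab}$ —, soit exactement deux n\oe uds — auquel cas le point marqué qu'elle porte s'enverrait sur un n\oe ud de $C_{\ob}^{stab}$. Ces deux éventualités contredisent la régularité de $\underline{z}$. J'en conclurais qu'aucune branche n'est contractée, que $C \cong C_{\ob}^{stab}$, et que $\underline{z}$ est un $k$-uplet de points lisses distincts de $C$.

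Ensuite, pour chaque $i$, j'invoquerais la construction de Knudsen (cf. \cite{F-P}) : au-dessus du lieu sans automorphisme et sous l'hypothèse précédente, oublier le seul point $z_i$ ne contracte aucune branche, si bien que $\ob_i : \Mck \to \overline{\mathcal{M}}_{k-1}^d(X)$ s'identifie localement à la projection de la courbe universelle au-dessus de $\overline{\mathcal{M}}_{k-1}^d(X)$. La fibre passant par $\mapg$ est alors la source $C$ paramétrée par la position de $z_i$, les données $(C,z_1,\dots,\hat{z}_i,\dots,z_k,u)$ restant fixées ; son espace tangent vertical donne $\ker d|_{\mapg}\ob_i \cong T_{z_i}C$, les vecteurs de ce noyau étant exactement ceux qui déplacent $z_i$ le long de $C$ sans rien bouger d'autre.

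Il resterait à assembler. L'inclusion $\ker d|_{\mapg}\ob_i \subseteq \ker d|_{\mapg}\ob$ provient de la factorisation $\ob = \ob' \circ \ob_i$, où $\ob'$ oublie les $k-1$ points restants : la règle de composition $d|_{\mapg}\ob = d\ob' \circ d|_{\mapg}\ob_i$ donne $\ker d|_{\mapg}\ob_i \subseteq \ker d|_{\mapg}\ob$, d'où $\sum_i \ker d|_{\mapg}\ob_i \subseteq \ker d|_{\mapg}\ob$. Par ailleurs, la fibre du morphisme total $\ob$ au-dessus de $(C_{\ob}^{stab},u)=(C,u)$ est, d'après l'étape de réduction, l'ouvert de $C^k$ formé des $k$-uplets de points lisses distincts ; son espace tangent en $\underline{z}$ fournit l'isomorphisme $\ker d|_{\mapg}\ob \cong T_{\underline{z}}C^k=\bigoplus_{i=1}^k T_{z_i}C$. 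Comme $\ker d|_{\mapg}\ob_i$ s'identifie précisément au $i$-ième facteur $T_{z_i}C$ et que les $z_i$ sont distincts, la somme $\sum_i \ker d|_{\mapg}\ob_i$ est directe, de dimension $k=\dim_{\C}\ker d|_{\mapg}\ob$ ; elle remplit donc tout le noyau, ce qui établit la décomposition $\ker d|_{\mapg}\ob = \bigoplus_{i=1}^k \ker d|_{\mapg}\ob_i$.

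La principale difficulté est concentrée dans l'étape de réduction et dans la justification de la structure de courbe universelle des morphismes d'oubli : c'est précisément la régularité de $\underline{z}$ dans $C_{\ob}^{stab}$ qui assure que la fibre de $\ob$ est un produit lisse (localement $C^k$), et non un objet plus compliqué où un point marqué s'approchant d'un n\oe ud ferait germer une branche rationnelle. Ce modèle local une fois acquis via \cite{F-P}, le reste se ramène à un comptage de dimensions et à la décomposition en facteurs de $T_{\underline{z}}C^k$.
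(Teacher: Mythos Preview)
Your proof is correct and follows essentially the same approach as the paper: both identify each $\ob_i$ locally with the universal curve over $\overline{\mathcal{M}}_{k-1}^d(X)^*$ to obtain $\ker d|_{\mapg}\ob_i \cong T_{z_i}C$, then assemble these into the direct sum decomposition. Your upfront reduction $C \cong C_{\ob}^{stab}$ and your dimension-count argument for the direct sum are more explicit than the paper's one-line appeal to the commutativity of the $\ob_i$, but the underlying ideas coincide.
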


\begin{proof}
Pour tous $i,j \in \{1,\dots,\nb\}$ distincts, les morphismes $\ob_i$ et $\ob_j$ commutent donc $\ker d|_{\mapg}\ob= \bigoplus_{i=1}^{k} \ker d|_{\mapg}\ob_i$. Soit $i \in \{1,\dots,\nb\}$, en l'absence d'automorphisme le morphisme $\ob_i$ décrit la courbe universelle au-dessus de l'image $\ob_i(\Mck^*)$ (cf. \cite{B-M}). Le groupe des automorphismes d'une application stable étant un sous-groupe du groupe des automorphismes de son image par un morphisme d'oubli, $\overline{\mathcal{M}}_{k-1}^d(X)^*$ est inclus dans $\ob_i(\Mck^*)$. On en déduit un isomorphisme entre $(\ob_i)^{-1}(C_{\ob_i}^{stab},z_1,\dots,\hat{z}_i,\dots,z_k,u)$ et $C_{\ob_i}^{stab}$. Par hypothèse $z_i$ est un point régulier de la fibre $C_{\ob_i}^{stab}$, or les branches ne se contractent par $\ob_i$ que sur des points singuliers, donc $C_{\ob_i}^{stab}=C$. Finalement au point $(C,z_1,\dots,z_k,u) \cong z_i$, on a $\ker d_{\mapg}\ob_i = T_{z_i}C_{\ob_i}^{stab}=T_{z_i}C$. On en déduit un isomorphisme entre $\ker d|_{\mapg}\ob$ et $T_{\underline{z}}C^k$ lorsque $\underline{z}$ sont des points réguliers de $C_{\ob}^{stab}$.
\end{proof}
On note $\Ker d\ob$ le sous-faisceau du fibré tangent de $\TMck$ défini par le noyau de la différentielle d'un morphisme d'oubli. C'est un sous-fibré de rang un restreint au lieu des applications stables simples non contractées (en particulier sur $\Mc^*$).

\section{Résultats}\label{chap2}

\subsection{Invariants de Welschinger}\label{invw}

Soit $A$ une courbe réelle de $X$ avec pour singularités éventuelles des points doubles ordinaires, on dit que $A$ est \emph{nodale}. Les points doubles de $A$ peuvent être complexes ou réels. On distingue deux types de points doubles réels : ceux qui sont à l'intersection de deux tangentes réelles et ceux qui sont à l'intersection de deux tangentes complexes conjuguées. Le premier est un point double réel \emph{non isolé}, le second est un point double réel \emph{isolé}.
On définit la \emph{masse} de $A$, notée $m(A)$ comme le nombre de points doubles réels \emph{isolés} de $A$.

\begin{figure}[htb]
\begin{center}
\input{pointdouble.pstex_t}
\caption{Points doubles ordinaires réels et leurs tangentes}
\end{center}
\end{figure}
\begin{theo}[Welschinger \cite{W1}]\label{thwelschi}
Soient $d \in H_2(X,\Z)$ telle que ${c_X}_*(d)=-d$ et $\mathcal{P}$ une collection de $c_1(X)d-1$ points de $X$ invariante pour $c_X$ et dont exactement $r$ éléments sont réels. On note $\mathcal{A_P}$ pour l'ensemble des courbes réelles de genre zéro qui contiennent $\mathcal{P}$. Alors, pour $\mathcal{P}$ générique, $\mathcal{A_P}$ est fini et ne contient que des courbes rationnelles nodales. De plus, la somme \begin{equation}\label{defWd}\sum_{A \in \mathcal{A_P}}(-1)^{m(A)}\end{equation} ne dépend pas de $\mathcal{P}$.
\end{theo}
On note ce nombre $\mathcal{W}_{d,r}(X)$, c'est un invariant de Welschinger de $(X,c_X)$. Remarquons que $\vert \mathcal{W}_{d,r}(X) \vert$ est une borne inférieure pour le nombre de courbes rationnelles réelles de $(X,c_X)$ réalisant la classe $d$ et qui contiennent un collection générique de $r$ points réels et $\frac{\nb-r}{2}$ paires de points complexes conjugués.

\subsection{Aspects topologiques}

Soit $K \subset \Mc$ la réunion des diviseurs de la frontière dont l'image par le morphisme d'évaluation est de codimension au moins deux. On dit d'une composante irréductible de $K$ qu'elle est \emph{écrasée} par le morphisme d'évaluation. On note $\R_{\tau}K$ le lieu de $K$ fixe pour $c_{\overline{\mathcal{M}},\tau}$. Munissons $\RX^{\nb}$ du système de coefficients entiers tordus $\mathcal{Z}$ et considérons la classe fondamentale qui lui est associée dans l'homologie singulière  $[\RX^{\nb}] \in H_{2\nb}\left( \RX^{\nb},\mathcal{Z}\right)$ (voir \cite{Steenrod}). On note $\mathcal{Z}^*$ le système de coefficients locaux sur $\RMc$ relevé de $\mathcal{Z}$ par le morphisme d'évaluation réel.

\begin{prop}[Welschinger \cite{W3}]\label{classfonda}
Soit $\mapg \in \RMc$ un point régulier du morphisme d'évaluation et $\R_{\tau}\mathcal{M}$ la composante connexe de $\RMc$ qui le contient. Il existe une unique classe fondamentale $[\R_{\tau}\mathcal{M}]$ à bord dans $\R_{\tau}K \cap \R_{\tau}\mathcal{M}$ tel que l'homomorphisme induit par le morphisme d'évaluation réel $H_{2\nb}(\R_{\tau}\mathcal{M},\R_{\tau}\mathcal{M}\setminus\{\mapg\};\mathcal{Z}^*) \to H_{2\nb}\left( \RX^{\nb},\RX^{\nb}\setminus \{u(\underline{z})\};\mathcal{Z}\right)$ envoie $[\R_{\tau}\mathcal{M}]$ sur $(-1)^{m(A)}.[\RX^{\nb}]$ où $A$ est la courbe définie par $u(C)$.
\end{prop}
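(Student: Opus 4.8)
Le plan se d\'eroule en deux temps : construire la classe $[\R_{\tau}\mathcal{M}]$ et \'etablir son unicit\'e au signe pr\`es, puis fixer ce signe au moyen du degr\'e local de $\Rev$ en $\mapg$.

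Pour le premier temps, j'introduirais le fibr\'e en droites r\'eel des orientations relatives $L=\det\TRMc\otimes(\Rev)^*\bigl(\det T(\RX^{\nb})\bigr)^{\vee}$, d\'efini l\`a o\`u $\Rev$ est r\'egulier, pour lequel $w_1(L)=w_1(\RMc^*)-(\Rev)^*w_1(\RX^{\nb})$ : la classe fondamentale cherch\'ee, \`a coefficients dans $\mathcal{Z}^*=(\Rev)^*\mathcal{Z}$, existe exactement l\`a o\`u $L$ est trivial. Or, sur l'ouvert dense des applications stables simples r\'eelles de source $\CP^1$, la proposition \ref{kerev} donne $\Nu\otimes\OP(-\underline{z})\cong\OP(-1)$, dont $H^0$ et $H^1$ s'annulent ; $d\ev$ y est donc un isomorphisme, $\ev$ un diff\'eomorphisme local et $L$ canoniquement trivial. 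Une analyse locale au voisinage d'un point g\'en\'erique d'une composante \emph{non \'ecras\'ee} de la fronti\`ere --- surd\'etermin\'ee d'un seul cran, de sorte que la proposition \ref{kerev} appliqu\'ee \`a chaque branche et l'effet du lissage du n{\oe}ud sur les images des points marqu\'es rendent encore $d\ev_p$ inversible --- montrerait que $\Rev$ y reste un diff\'eomorphisme local. Il en r\'esulte que $w_1(L)$ est \`a support dans la r\'eunion $\R_{\tau}K$ des composantes \'ecras\'ees de la fronti\`ere ; ainsi, sur $\R_{\tau}\mathcal{M}\setminus\R_{\tau}K$, priv\'e de son lieu singulier (de codimension $\geq 2$, sans effet sur $H_{2\nb}$ ni $H_{2\nb-1}$) et que l'on v\'erifie connexe en examinant sa trace sur les applications simples de source $\CP^1$, le syst\`eme $\mathcal{Z}^*\otimes\mathrm{or}$ est trivial et $H_{2\nb}(\R_{\tau}\mathcal{M},\R_{\tau}K;\mathcal{Z}^*)$ est libre de rang un --- d'o\`u deux g\'en\'erateurs oppos\'es.

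Pour le second temps, $\mapg$ \'etant r\'egulier appartient \`a cet ouvert, $d\Rev$ identifie $T_{\mapg}\RMc$ \`a $\bigoplus_{i=1}^{\nb}T_{u(z_i)}\R X$, et chacun des deux g\'en\'erateurs a un degr\'e local bien d\'efini, l'un $+1$, l'autre $-1$ ; on retiendrait celui d'image $(-1)^{m(A)}.[\RX^{\nb}]$. Pour voir que c'est le bon choix --- autrement dit que le degr\'e local de $\Rev$ en $\mapg$ (et, par le premier temps, en tout point r\'egulier de $\R_{\tau}\mathcal{M}$, pour la courbe image correspondante) vaut bien $(-1)^{m(A)}$ --- je comparerais, au moyen de la suite exacte $0\to T_{\CP^1}\to u^*T_X\to\Nu\to 0$ et de la donn\'ee des points marqu\'es, l'orientation que la structure complexe de $\Nu\cong\OP(c_1(X)d-2)$ induit sur $T_{\mapg}\RMc$ \`a celle transport\'ee par $d\Rev$ depuis $\bigoplus_i T_{u(z_i)}\R X$. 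Cette comparaison se factorise en un produit de contributions locales index\'ees par les points doubles de la courbe $A=u(C)$ ; et un mod\`ele local au voisinage d'un n{\oe}ud --- selon que ses deux branches sont r\'eelles ou complexes conjugu\'ees --- donnerait que tout point double r\'eel \emph{isol\'e} contribue $-1$ et tout autre point double $+1$, d'o\`u le facteur global $(-1)^{m(A)}$.

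La difficult\'e principale sera ce mod\`ele local au voisinage d'un point double isol\'e : montrer que sa contribution au degr\'e local est exactement $-1$, ce qui repose sur l'analyse de la structure r\'eelle du faisceau normal pr\`es du n{\oe}ud et constitue le c{\oe}ur non formel de l'argument (c'est l\`a l'ingr\'edient de Welschinger) ; le contr\^ole, dans le premier temps, des composantes non \'ecras\'ees de la fronti\`ere et de la connexit\'e, quoique plus \'el\'ementaire, demandera aussi de la pr\'ecision.
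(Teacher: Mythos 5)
The paper does not prove this proposition: it is quoted from Welschinger \cite{W3} and used as a black box, so I can only measure your sketch against what such a proof must contain. Your architecture (existence and uniqueness of the relative fundamental class via triviality of the orientation-comparison bundle $L$ off $\R_{\tau}K$, then normalisation by a local degree) is the right one, but two of your steps skip precisely the points where the content of the statement lies. First, the regular locus of $ev$ is strictly smaller than the locus of simple maps with source $\CP^1$: Proposition \ref{kerev} gives $\ker d\,ev\cong H^0(\CP^1,\Nu\otimes\OP(-\underline{z}))$, and this vanishes only when $u$ is an \emph{immersion}, since only then is $\Nu$ locally free of degree $c_1(X)d-2$; the paper itself records that the critical points of $ev$ lie in the non-immersed locus as well as in the boundary. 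The locus of real maps with a real cusp is (for $d$ large enough) of real codimension one, generically disjoint from the boundary, and the real evaluation map has a fold along it, so $\det(d\,ev)$ changes sign there --- this is exactly what makes pairs of real solutions appear and disappear in Welschinger's wall-crossing analysis. Your inference ``local diffeomorphism away from $\R_{\tau}K$, hence $w_1(L)$ supported in $\R_{\tau}K$'' therefore fails, and your first step only produces a fundamental class relative to $\R_{\tau}K$ \emph{union the real cuspidal divisor}. Extending it across that wall is not bookkeeping: it amounts to showing that crossing the cuspidal wall flips both the local degree of the evaluation map and the parity of $m(A)$ (an isolated real node becoming non-isolated), which is the mechanism that forces the weight $(-1)^{m(A)}$ in the first place. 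Your sketch never confronts this divisor.

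Second, the reference orientation in your second step is not defined: a complex structure induces no orientation on a real form, so ``the orientation that the complex structure of $\Nu$ induces on $T\R_{\tau}\overline{\mathcal{M}}$'' is not a meaningful object, and the sequence $0\to T_{\CP^1}\to u^*T_X\to\Nu\to 0$ does not see the double points of $A$ at all (they are points of non-injectivity of $u$, not of non-immersivity). The factorisation over the nodes that you invoke requires comparing the deformations of the parametrised curve, i.e. $H^0(\CP^1,\Nu)$, with those of the image divisor $A$, i.e. sections of $\mathcal{O}_A(A)$; one has $u^*\mathcal{O}_A(A)\cong\Nu\otimes\mathcal{O}(u^{-1}(\mathrm{Double}(A)))$, and an isolated real node is precisely the one whose preimage is a pair of complex-conjugate points of $\CP^1$ rather than two real ones --- that is where the local factor $-1$ lives. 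Without this intermediate object there is nothing for your product of local contributions to be a product of. Finally, $\R_{\tau}\mathcal{M}\setminus\R_{\tau}K$ has no reason to be connected (the whole paper is about orientations flipping across $\R_{\tau}K$), so uniqueness cannot be reduced to connectedness of that complement; it too rests on the global consistency of the sign $(-1)^{m(A)}$ at every regular point.
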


Autrement dit, il existe une classe fondamentale $[\RMc]$ à bord dans $\R_{\tau}K$ et telle que le morphisme d'évaluation réel $\Rev$ induise un homomorphisme entre $ H_{2\nb}(\RMc,\R_{\tau}K;\mathcal{Z}^*)$ et $H_{2\nb}( \RX^{\nb},\mathcal{Z})$ qui envoie la classe $[\RMc]$ sur la classe $\mathcal{W}_{d,\tau}(X).[\RX^{\nb}]$. Puisque $\RMc$ est une variété normale, son lieu singulier est de codimension au moins deux. Elle admet donc une première classe de Stiefel-Whitney et un morphisme de dualité au premier grade $H_{2\nb-1}(\RMc,\Z/2\Z)\to H^1(\RMc,\Z/2\Z) $. Pour une classe d'homologie de codimension un $[\R_{\tau}D] \in H_{2\nb-1}(\RMc,\Z/2\Z)$, on note $[\R_{\tau}D]^{\vee} \in H^1(\RMc,\Z/2\Z)$ son dual. Le théorème \ref{thwelschi} admet le corollaire suivant.

\begin{coro}[Welschinger \cite{W3}]\label{W3}
La première classe de Stiefel-Whitney de la variété projective normale $\RMc$ s'écrit : $$w_1(\RMc)=(\Rev)^*w_1(\RX^{\nb})+\sum_{D \subset K} \epsilon(D).[\R_{\tau}D]^{\vee}$$ où $\epsilon(D) \in \{0,1\}$ et la somme est prise sur l'ensemble des composantes irréductibles $D$ de la frontière de $\Mc$ écrasées par le morphisme d'évaluation et dont la partie réelle $\R_{\tau}D$ possède au moins une composante connexe de codimension un dans $\RMc$.
\end{coro}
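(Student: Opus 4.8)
The plan is to compare the orientation local system of $\RMc$ with the one pulled back from $\RX^{\nb}$ by the real evaluation morphism, and to show that the two agree away from the crushed boundary $\R_{\tau}K$. Since $\RMc$ is normal its singular locus has codimension at least two, so the restriction $H^1(\RMc,\Z/2\Z)\to H^1(\RMc^*,\Z/2\Z)$ to the smooth open set $\RMc^*$ is an isomorphism and $w_1(\RMc)$ is represented by $w_1(\RMc^*)$; I work on $\RMc^*$. Writing $\mathcal{O}$ for the orientation local system of $\RMc^*$, the class $w_1(\RMc)$ corresponds to $\mathcal{O}$, while $(\Rev)^{*}w_1(\RX^{\nb})$ corresponds to $\mathcal{Z}^{*}=(\Rev)^{*}\mathcal{Z}$, the pullback of the (mod two) orientation system of $\RX^{\nb}$. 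I then set $\delta=w_1(\RMc)+(\Rev)^{*}w_1(\RX^{\nb})$, the class of the $\Z/2\Z$-local system $\mathcal{O}\otimes\mathcal{Z}^{*}$, and the entire problem becomes to localise $\delta$ on $\R_{\tau}K$.

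First I would check that $\delta$ vanishes on the regular locus $\Reg$. Over $\Reg$ the morphism $\Rev$ is a local diffeomorphism: by Proposition \ref{kerev} the kernel $H^0(\CP^1,\Nu\otimes\OP(-z))$ and cokernel $H^1(\CP^1,\Nu\otimes\OP(-z))$ of $d\,\ev$ both vanish there, so $d\Rev$ is an isomorphism, whence $\mathcal{O}\cong\mathcal{Z}^{*}$ over $\Reg$ and $\delta|_{\Reg}=0$. To extend this to all of $\RMc^*\setminus\R_{\tau}K$ I would invoke Proposition \ref{classfonda}, which produces on each component $\R_{\tau}\mathcal{M}$ meeting $\Reg$ a fundamental class $[\R_{\tau}\mathcal{M}]\in H_{2\nb}(\R_{\tau}\mathcal{M},\R_{\tau}K;\mathcal{Z}^{*})$, with boundary in $\R_{\tau}K$, restricting at the regular point $\mapg$ to a local generator sent onto $\pm[\RX^{\nb}]$. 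Being a fundamental class, this local-generator property holds at every smooth point off the relative boundary $\R_{\tau}K$; and the existence of such a $\mathcal{Z}^{*}$-twisted fundamental class is precisely the assertion that $\mathcal{Z}^{*}\cong\mathcal{O}$ on $\RMc^*\setminus\R_{\tau}K$. Hence $\delta$ restricts to zero there, i.e. $\delta$ is supported on $\R_{\tau}K$.

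It then remains to read $\delta$ off its support. As $\delta$ lies in the kernel of $H^1(\RMc,\Z/2\Z)\to H^1(\RMc^*\setminus\R_{\tau}K,\Z/2\Z)$, the exact sequence of the pair together with excision and the duality morphism $H^1(\RMc,\Z/2\Z)\to H_{2\nb-1}(\RMc,\Z/2\Z)$ available on the normal variety $\RMc$ exhibit $\delta$ as coming from a class supported on $\R_{\tau}K$. The relevant group is generated over $\Z/2\Z$ by the fundamental classes $[\R_{\tau}D]$ of those irreducible components $D$ of the boundary that are crushed by evaluation and whose real part $\R_{\tau}D$ has a connected component of codimension one; a component whose real part is everywhere of codimension at least two carries no class in degree $2\nb-1$ and drops out. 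Dualising gives $\delta=\sum_{D\subset K}\epsilon(D)\,[\R_{\tau}D]^{\vee}$ with each $\epsilon(D)\in\Z/2\Z=\{0,1\}$, which rearranges into the asserted expression for $w_1(\RMc)$.

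The main obstacle is the second step: upgrading $\delta|_{\Reg}=0$ to $\delta|_{\RMc^*\setminus\R_{\tau}K}=0$, equivalently showing that the twisted fundamental class of Proposition \ref{classfonda} genuinely extends across the ramification locus of $\Rev$ inside $\RMc^*$ and across the non-crushed boundary divisors, so that these do not enter its relative boundary. For a non-crushed divisor this is because its generic point is not crushed and one checks $\Rev$ to be unramified there, so it already lies in $\Reg$. For the ramification locus the delicate point is that $\Rev$ folds there and reverses its local degree, yet the $\mathcal{Z}^{*}$-twisted fundamental class still extends; the geometric reason is that $\Mc^{*}$ is a complex manifold and $ev$ is holomorphic, so the complex orientations of source and target are preserved by $ev$ and the real systems $\mathcal{O}$ and $\mathcal{Z}^{*}$ stay isomorphic through the ambient complex structure even where $d\Rev$ degenerates. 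This is exactly the content encoded in Proposition \ref{classfonda} by the fact that the fundamental class has boundary only in the crushed locus $\R_{\tau}K$, and I would treat establishing this extension as the heart of the argument.
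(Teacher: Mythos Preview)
Your overall strategy is correct and aligns with the paper's treatment: the paper does not give an independent proof of this corollary but presents it as a direct consequence, via Proposition~\ref{classfonda}, of Welschinger's invariance theorem (Theorem~\ref{thwelschi}). You correctly identify the key mechanism, namely that the existence of a $\mathcal{Z}^{*}$-twisted fundamental class with relative boundary contained in $\R_{\tau}K$ is equivalent to $\mathcal{O}\cong\mathcal{Z}^{*}$ on $\RMc^{*}\setminus\R_{\tau}K$, and your localisation of $\delta$ on $\R_{\tau}K$ via the long exact sequence and duality is the standard argument. Your remark that the non-crushed boundary divisors lie in $\Reg$ is exactly what the paper cites from \cite{W1}.

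One point needs correction. Your ``geometric reason'' for why the twisted fundamental class extends across the ramification locus of $\Rev$ is wrong: the ambient complex structure on $\Mc^{*}$ orients $\Mc^{*}$, but it does \emph{not} orient the real locus $\RMc^{*}$, so the holomorphy of $ev$ tells you nothing about the compatibility of the real orientation systems where $d\Rev$ degenerates. The genuine mechanism is the sign $(-1)^{m(A)}$ in Proposition~\ref{classfonda}. The ramification locus of $\Rev$ inside $\RM^{*}$ is the locus of non-immersed (cuspidal) curves; crossing it, a real node becomes a cusp and then reappears with the opposite type (isolated versus non-isolated), so the mass $m(A)$ changes parity precisely so as to compensate the sign flip of the local degree of $\Rev$. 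This is the content of Theorem~\ref{thwelschi}, and it is why the paper explicitly presents the corollary as following from that theorem rather than from any complex-analytic argument. You should replace your heuristic accordingly; otherwise the proof is sound.
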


Les points critiques du morphisme d'évaluation sont contenus dans le lieu des courbes non immergées (proposition \ref{kerev}) et dans la frontière de $\Mc$. Les composantes irréductibles de la frontière (voir \ref{diviseurs}) ne sont pas toutes écrasés par le morphisme d'évaluation. Plus précisément, on a la proposition suivante.

\begin{prop}\label{imcodim1}
L'image d'un diviseur de la frontière $D(A,B ; d_A,d_B)$ par le morphisme d'évaluation $\ev : \Mc \to X^{\nb}$ est de codimension un dans $X^{\nb}$ si et seulement si $|A|=k_{d_A}$ ou $|A|=k_{d_A}+1$.
\end{prop}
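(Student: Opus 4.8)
The plan is to reduce the statement to a dimension count for the differential of the restricted evaluation map. Write $a=|A|$ and $b=|B|=\nb-a$, and recall that $k_{d_A}+k_{d_B}=\nb-1$, since $\nb=c_1(X)d-1$. First I would identify $D:=D(A,B;d_A,d_B)$ with the fibre product $\mathcal{M}_{a+1}^{d_A}(X)\times_X\mathcal{M}_{b+1}^{d_B}(X)$ taken over evaluation at the node $\xi$ (the last marked point on each side being $\xi$), under which $\ev|_D$ becomes the pair of evaluations at the points indexed by $A$ and by $B$ respectively. Since the boundary is a divisor, $\dim D=2\nb-1$, so the claim amounts to computing the dimension of the generic fibre of $\ev|_D$ over $\overline{\ev(D)}$, i.e.\ $\dim\ker d(\ev|_D)$ at a generic point (we work in characteristic zero). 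I would first discard the degenerate cases: if $d_A=0$ then $C_A$ is contracted, $u$ is constant on $C_A$, so $\ev(D)$ lies in the small diagonal of $X^a\subset X^{\nb}$, of codimension $\geq 2(a-1)\geq 2$; as moreover $k_{d_A}=-1$, neither $a=k_{d_A}$ nor $a=k_{d_A}+1$ can occur and the statement holds (symmetrically if $d_B=0$). From now on $d_A,d_B\neq 0$.

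Next I would analyse each side. At a generic point of $D$ both components of the source are smooth copies of $\CP^1$, the restrictions $u_A,u_B$ of $u$ are immersions, and by the adjunction formula recalled in section~\ref{eval} one has $\mathcal{N}_{u_A}\cong\OP(k_{d_A}-1)$ and $\mathcal{N}_{u_B}\cong\OP(k_{d_B}-1)$. Applying Proposition~\ref{kerev} to the evaluation at the $A$-points on $\mathcal{M}_a^{d_A}(X)$, and lifting through the forgetful morphism that drops $\xi$ (whose differential has a one-dimensional kernel by Lemma~\ref{propkero}), I obtain for $ev_A\colon\mathcal{M}_{a+1}^{d_A}(X)\to X^a$ that
$$\dim\ker d(ev_A)=h^0\bigl(\CP^1,\OP(k_{d_A}-1-a)\bigr)+1=\max(0,k_{d_A}-a)+1.$$
Then I would identify the image $V_A\subseteq T_{u(\xi)}X$ of $d(ev_\xi^A)$ restricted to $\ker d(ev_A)$: it is $2$-dimensional when $a\leq k_{d_A}-1$ — a section of $\OP(k_{d_A}-1-a)$ non-vanishing at $\xi$ yields a deformation of $u_A$ fixing the $A$-points and moving $u(\xi)$ transversally to $u_A(C_A)$ — and it is the tangent line to $u_A(C_A)$ at $u(\xi)$, hence $1$-dimensional, when $a\geq k_{d_A}$; the same holds on the $B$-side with $b$ and $k_{d_B}$.

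Finally I would assemble the two sides. Since $\ker d(\ev|_D)$ is the kernel of the linear map $\ker d(ev_A)\oplus\ker d(ev_B)\to T_{u(\xi)}X$ sending $(v_A,v_B)$ to $d(ev_\xi^A)(v_A)-d(ev_\xi^B)(v_B)$, whose image is $V_A+V_B$, we get $\dim\ker d(\ev|_D)=\dim\ker d(ev_A)+\dim\ker d(ev_B)-\dim(V_A+V_B)$. At a generic point $V_A+V_B=T_{u(\xi)}X$: this is clear if $\dim V_A=2$ or $\dim V_B=2$, and when $\dim V_A=\dim V_B=1$ (that is, $a\geq k_{d_A}$ and $b\geq k_{d_B}$) it fails only if $u_A(C_A)$ and $u_B(C_B)$ are tangent at $u(\xi)$, which is excluded on a dense open subset of $D$ since the node there moves freely. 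Hence $\dim\ker d(\ev|_D)=\max(0,k_{d_A}-a)+\max(0,k_{d_B}-b)$, and using $k_{d_B}-b=(\nb-1-k_{d_A})-(\nb-a)=a-k_{d_A}-1$ together with $\dim D=2\nb-1$ and $\dim X^{\nb}=2\nb$,
$$\codim_{X^{\nb}}\overline{\ev(D)}=2\nb-\bigl(\dim D-\dim\ker d(\ev|_D)\bigr)=1+\max\bigl(0,\ k_{d_A}-a,\ a-k_{d_A}-1\bigr),$$
which equals $1$ exactly when $k_{d_A}-a\leq 0$ and $a-k_{d_A}-1\leq 0$, i.e.\ $a\in\{k_{d_A},k_{d_A}+1\}$ — the asserted equivalence. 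I expect the main obstacle to be the transversality input $V_A+V_B=T_{u(\xi)}X$, together with the genericity facts feeding it (smoothness of the generic source, that $u_A,u_B$ are generically immersions with the stated normal bundles, and free positioning of the node); these rest on convexity of $X$ to control the spaces $\mathcal{M}^{d'}(X)$ and on a small-deformation argument for the tangency. Everything else is dimension bookkeeping on top of Proposition~\ref{kerev}.
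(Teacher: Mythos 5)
Votre preuve est correcte et repose sur la m\^eme d\'ecomposition en produit fibr\'e $D(A,B;d_A,d_B)\cong\overline{\mathcal{M}}_{|A|+1}^{d_A}(X)\times_X\overline{\mathcal{M}}_{|B|+1}^{d_B}(X)$ que celle du texte, mais le calcul central est plus fin. Le texte se contente d'un comptage grossier de dimensions sur l'un des facteurs pour obtenir que l'image est de codimension au moins deux d\`es que $|A|>k_{d_A}+1$ (et sym\'etriquement), ce qui ne donne que l'implication \emph{codimension un} $\Rightarrow$ $|A|\in\{k_{d_A},k_{d_A}+1\}$ ; la r\'eciproque est laiss\'ee \`a l'\'enonc\'e de r\'egularit\'e de \cite{W1} invoqu\'e juste apr\`es. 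Vous calculez au contraire la dimension g\'en\'erique de $\ker d(\ev|_D)$ via la proposition \ref{kerev} appliqu\'ee \`a chaque branche et la condition de recollement au n{\oe}ud, d'o\`u $\codim\overline{\ev(D)}=1+\max(0,\,k_{d_A}-|A|,\,|A|-k_{d_A}-1)$, ce qui livre les deux implications d'un coup ; le traitement du cas $d_A=0$ par la petite diagonale et la convention $k_0=-1$ est \'egalement plus explicite que dans le texte. Le prix \`a payer est l'hypoth\`ese de transversalit\'e $V_A+V_B=T_{u(\xi)}X$, mais remarquez qu'elle n'est n\'ecessaire que lorsque $V_A$ et $V_B$ sont tous deux des droites tangentes, c'est-\`a-dire pr\'ecis\'ement lorsque $|A|\in\{k_{d_A},k_{d_A}+1\}$ : dans tous les autres cas l'un des deux sous-espaces est d\'ej\`a \'egal \`a $T_{u(\xi)}X$ et votre formule vaut sans condition, de sorte que la direction \emph{seulement si} est compl\`ete telle quelle. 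L'affirmation restante --- les deux branches d'un \'el\'ement g\'en\'erique d'un tel diviseur se coupent transversalement au n{\oe}ud --- est exactement le contenu du r\'esultat de \cite{W1} que le texte cite pour l'autre direction ; vous avez donc correctement isol\'e le seul point non trivial, commun aux deux arguments. Veillez seulement \`a pr\'eciser que le point g\'en\'erique choisi est une immersion sans automorphisme sur chaque branche, pour que la proposition \ref{kerev} et le lemme \ref{propkero} s'appliquent.
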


\begin{proof}
Soit $D(A,B ; d_A,d_B)$ un diviseur de la frontière de $\Mc$ tel que défini en \ref{diviseurs}. Pour $\mapg \in \Mc$ on note $\underline{z}_A=(z_{i_1},\dots,z_{i_{|A|}})$ le $|A|$-uplet tel que $A=\{i_1,\dots,i_{|A|}\}$. Soit $\overline{\mathcal{M}}_{|A|+1}^{d_A}(X) \times_X \overline{\mathcal{M}}_{|B|+1}^{d_B}(X)$ le produit fibré relativement aux morphismes d'évaluation $e_A$ et $e_B$ associés au point marqué supplémentaire $e_{A,B} : (C^{A,B},\underline{z}_{A,B},\xi,u) \in \overline{\mathcal{M}}_{|A,B|+1}^{d_{A,B}}(X) \mapsto u(\xi) \in X$. Il existe un isomorphisme $D(A,B ; d_A,d_B) \cong \overline{\mathcal{M}}_{|A|+1}^{d_A}(X) \times_X \overline{\mathcal{M}}_{|B|+1}^{d_B}(X)$ décrit dans \cite{F-P}. Le morphisme d'évaluation se factorise $\mapg \in D(A,B ; d_A,d_B) \mapsto  \left( (C_A,\underline{z}_{A},\xi,u_A),(C_B,\underline{z}_B,\xi,u_B)\right) \in \overline{\mathcal{M}}_{|A|+1}^{d_A}(X) ×\overline{\mathcal{M}}_{|B|+1}^{d_B}(X) \mapsto u(\underline{z}) \in X^{\nb}$. Supposons $|A|> k_{d_A}+1$, ce qui entra\^ine $|B|\leq k_{d_B}-2$, alors $\dim \overline{\mathcal {M}}_{|B|+1}^{d_B}(X) \leq 2k_{d_B}-1$. Puisque $\dim X^{k_{d_B}}=2k_{d_B}$, l'image du diviseur par le morphisme composé est de codimension au moins égale à deux dans $X^{\nb}=X^{k_{d_A}+k_{d_B}+1}$. La symétrie dans le rôle de $A$ et de $B$ achève le raisonnement.
\end{proof}

Dans \cite{W1}, Welschinger montre que les diviseurs de la frontière qui sont de codimension un à l'image est un lieu régulier pour le morphisme d'évaluation. Une conséquence de la proposition \ref{imcodim1} est que seuls les diviseurs qui vérifient $|A|>k_{d_A}+1$ ou $|B|>k_{d_B}+1$ peuvent contribuer à un représentant dual de la première classe de Stiefel-Whitney de la partie réelle. Plus précisément et avec les notations introduites en \ref{diviseurs} on a la proposition suivante.

\begin{prop}\label{indept}
La première classe de Stiefel-Whitney de $\RMc$ s'écrit~:
$$w_1(\RMc)=(\Rev)^*w_1(\RX^{\nb})+\sum_{\substack{d' \in \red\\k_{d'}+1<k'\leq \nb}}\epsilon_{d',k'}.[\RKd]^{\vee}$$
où $\epsilon_{d',k'} \in \{0,1\}$ dépend seulement des paramètres $d'\in \red$ et $k' \in \N$.
\end{prop}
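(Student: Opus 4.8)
The strategy is to upgrade Corollaire \ref{W3} by identifying which boundary divisors $D(A,B;d_A,d_B)$ of $\Mc$ can actually contribute to the formula for $w_1(\RMc)$, and then to re-index the surviving contributions in terms of the families $\RKd$. First I would recall that Corollaire \ref{W3} already expresses $w_1(\RMc)$ as $(\Rev)^*w_1(\RX^{\nb})$ plus a sum of $\epsilon(D).[\R_{\tau}D]^{\vee}$ over irreducible components $D$ of the boundary that are crushed (image of codimension $\geq 2$) by the evaluation morphism \emph{and} whose real part $\R_{\tau}D$ has a connected component of codimension one in $\RMc$. The first reduction is to invoke Proposition \ref{imcodim1}: a boundary divisor $D(A,B;d_A,d_B)$ is crushed by $\ev$ precisely when $|A|\neq k_{d_A}$ and $|A|\neq k_{d_A}+1$, i.e. (by the symmetry $|A|+|B|=\nb$ and $k_{d_A}+k_{d_B}=\nb-1$) when $|A|>k_{d_A}+1$ or $|B|>k_{d_B}+1$. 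So only these divisors can appear, and this already accounts for the range of the sum: writing $d'=d_A$ and $k'=|A|$ with the convention $k'>k_{d'}$ that makes the labelling unique (as in the proof of Proposition \ref{decomp frontiere}), the condition becomes $k_{d'}+1<k'\leq\nb$.

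Next I would pass from individual divisors $D(A,B;d_A,d_B)$ to the unions $\Kd$. By definition $\Kd=\bigcup_{|A|=k',\,d_A=d'}D(A,B;d_A,d_B)$, and for fixed $(d',k')$ the divisors $D(A,B;d_A,d_B)$ with $|A|=k'$, $d_A=d'$ are permuted transitively by the $S_{\nb}$-action on marked points. Since $\Rev$, the real structure $c_{\overline{\mathcal{M}},\tau}$, and $c_{\tau}$ on $X^{\nb}$ are all equivariant, and since the coefficient $\epsilon(D)\in\{0,1\}$ in Corollaire \ref{W3} is determined intrinsically by the local topology of $\RMc$ along $\R_\tau D$ relative to the critical locus of $\Rev$, any two divisors in the same orbit carry the same coefficient. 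Hence I can set $\epsilon_{d',k'}$ equal to the common value of $\epsilon(D)$ over $D\subset\Kd$, and the partial sum $\sum_{D\subset\Kd,\,D\subset K}\epsilon(D).[\R_\tau D]^{\vee}$ collapses to $\epsilon_{d',k'}.[\RKd]^{\vee}$ — here one uses that $[\RKd]=\sum_{D\subset\Kd}[\R_\tau D]$ in $H_{2\nb-1}(\RMc,\Z/2\Z)$ (a union of divisors meeting in codimension $\geq 2$), so duality is additive over the pieces. One must also remember that $\R_\tau D$ might be empty or of codimension $\geq 2$ for some divisors in the orbit, depending on $\tau$; but then $[\R_\tau D]^{\vee}=0$, consistent with absorbing it into the single term, and the family $\RKd$ is precisely the union of the nonempty codimension-one real parts.

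The main obstacle is the independence claim itself: that $\epsilon_{d',k'}$ depends only on $d'$ and $k'$ and not on $\tau$ or on the connected component of $\RMc$. The equivariance argument handles the independence of the \emph{choice of $A$} within a fixed $\tau$, but to get genuine independence of $\tau$ one needs that the coefficient is computed by a purely local model near a generic point of $\R_\tau\Kd$ — a generic real stable map whose source is $\CP^1\cup_\xi\CP^1$ with $k'$ real marked points and class $d'$ on one branch — and that this local model (the way the smooth locus $\RMc^*$ wraps around $\R_\tau\Kd$, detected by $\Rev$ via Proposition \ref{kerev} and Lemme \ref{propkero}) does not see $\tau$ beyond the combinatorial data $(d',k')$. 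I would therefore reduce $\epsilon_{d',k'}$ to a local monodromy/orientation computation along a small loop in $\RMc$ meeting $\R_\tau\Kd$ transversally once, expressed through the normal bundle $\Nu$ of the two branches and the kernel sheaf $\Ker d\ob$; the value of this computation manifestly depends only on the degrees and numbers of marked points on the two branches, i.e. on $(d',k')$. This is exactly the computation carried out in the remaining sections to pin down $\epsilon_{d',k'}$ explicitly, so for Proposition \ref{indept} it suffices to note that such a local description exists and is $\tau$-insensitive.
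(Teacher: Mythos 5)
Your proposal follows essentially the same route as the paper: you restrict the range of the sum via Proposition \ref{imcodim1}, then use the $S_{\nb}$-action on the marked points to identify the coefficients of boundary divisors of the same calibre $(|A|,|B|,d_A,d_B)$, and defer the full ``depends only on $(d',k')$'' claim to the local model computed in the later sections, exactly as the paper does. The one imprecision is your assertion that $c_{\overline{\mathcal{M}},\tau}$ is $S_{\nb}$-equivariant: a permutation $\sigma$ in fact intertwines $c_{\overline{\mathcal{M}},\tau}$ with $c_{\overline{\mathcal{M}},\sigma\tau\sigma^{-1}}$, and the paper closes this small gap by noting that these two real structures coincide because the structure depends only on the conjugacy class of $\tau$.
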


\begin{proof}
D'après le corollaire de la proposition \ref{decomp frontiere}, la frontière de $\Mc$ est la réunion $\bigcup_{\substack{d' \in red^d \\ k_{d'} < k' \leq \nb}} \Kd$ et d'après la proposition \ref{imcodim1}, $K$ est complémentaire à la réunion $\bigcup_{\substack{d' \in \red}} \mathcal{K}_{d',k_{d'}+1}^d$. Autrement dit, les composantes connexes de $\RK$ de codimension un dans $\RMc$ et qui sont écrasées par le morphisme d'évaluation réel sont contenues dans la réunion $\bigcup_{\substack{d' \in \red \\ k_{d'}+1 < k' \leq \nb}} \RKd$. Pour conclure, on remarque que l'indexation sur les points marqués ne joue aucun rôle dans le calcul de la première classe de Stiefel-Whitney. En effet, l'application induite par une permutation $\sigma \in S_{\nb}$ des points marqués est un automorphisme de $\Mc$ (cf. \cite{K-M}) qui permute les diviseurs de la frontière $D(A,B,d_A,d_B)$ de même calibre $(|A|,|B|,d_A,d_B)$. Pour $\tau \in S_{\nb}$ une permutation d'ordre deux, cet automorphisme induit un isomorphisme réel entre $(\Mc,c_{\overline{\mathcal{M}},{\tau}})$ et $(\Mc,c_{\overline{\mathcal{M}},{\sigma\circ\tau\circ{\sigma}^{-1}}})$ puisque $\sigma\circ\tau\circ{\sigma}^{-1}\circ\sigma\circ\tau=\sigma$. Or les structures réelles $c_{\overline{\mathcal{M}},{\tau}}$ sur $\Mc$ ne se distinguent que par la classe de conjugaison de $\tau$ (voir \ref{strucreelle}), c'est-à-dire que $c_{\overline{\mathcal{M}},{\tau}}=c_{{\overline{\mathcal{M}},{\sigma\circ\tau\circ{\sigma}^{-1}}}}$. Donc les propriétés homologiques de la partie réelle des diviseurs de même calibre sont équivalentes. Une composante irréductible de la frontière est contenue dans un des diviseurs de la frontière et donc dans un certain $\Kd$. Le fait qu'une composante connexe de sa partie réelle $\mathcal{D} \subset \RMc$ contribue ou pas à un représentant dual de la première classe de Stiefel-Whitney dépend uniquement des données $k'$ et $d'$. En particulier cela ne dépend pas de la composante connexe de $\RKd$ choisie.
\end{proof}

\subsection{Exposé des résultats}

Nous déterminons quelles sont les diviseurs de la frontière dont la partie réelle $\RKd$ participe effectivement à une classe duale de la première classe de Stiefel-Whitney de l'espace des modules $w_1(\RMc)$. C'est-à-dire que nous déterminons la valeur de chaque $\epsilon_{d',k'}$ dans la description précédente.

\begin{theo}\label{theo}
Soit $X$ une surface projective convexe équipée d'un structure réelle $c_X$ et $d$ une classe non nulle de $H_2(X,\Z)$ telle que $c_X(d)=-d$. On note $\nb$ le nombre $c_1(X)d-1$ et $\tau$ une permutation d'ordre au plus deux dans le groupe des permutations $S_{\nb}$. La première classe de Stiefel-Whitney de la partie réelle $\RMc$ est représentée par
$$w_1(\RMc)=(\Rev)^*w_1(\RX^{\nb})+\sum_{\substack{d' \in \red\\k_{d'}<k'\leq \nb}}\epsilon_{d',k'}.[\RKd]^{\vee}$$
avec $\epsilon_{d',k'} \in \{0,1\}$ tel que $\epsilon_{d',k'}=1$ si et seulement si $k'-k_{d'}=2 \mod (4)$ ou  $k'-k_{d'}=3\mod (4)$.
\end{theo}

\section{Démonstration}\label{demo1}

Nous cherchons à calculer le bord de la chaîne $[\RMc]$ dans le groupe des cycles $C_{2\nb}(\RMc^*,\mathcal{Z}^*)$. Fixons $\R_{\tau}\mathcal{M}$ une composante connexe de $\RMc^* \setminus \R_{\tau}K$ et choisissons $[\R_{\tau}\mathcal{M}] \in H_{2\nb}(\R_{\tau}\mathcal{M},\R_{\tau}K \cap \R_{\tau}\mathcal{M} ;\mathcal{Z}^*)$ la classe fondamentale définie par la proposition \ref{classfonda}. On considère pour une composante connexe $\mathcal{D}$ de $\R_{\tau}K$, de codimension un dans $\RMc^*$, un voisinage contractile $B$ d'un point générique de $\mathcal{D}$ de sorte que $\R_{\tau}\mathcal{M} \cap B$ ait deux composantes connexes $\R_{\tau}\mathcal{M}_0^B$ et $\R_{\tau}\mathcal{M}_1^B$ séparées par $\R_{\tau}K \cap B$. On obtient deux générateurs $[\R_{\tau}\mathcal{M}_0^B]$ et $[\R_{\tau}\mathcal{M}_1^B]$ de $H_{2\nb}(B,\partial B \cup \R_{\tau}K;\mathcal{Z}^*)$. Chacun induit un générateur $[B_0]$ (resp. $[B_1]$) de $H_{2\nb}(B,\partial B ; \mathcal{Z}^*)$. Il s'agit d'évaluer si l'application $H_{2\nb}(B,\partial B ; \mathcal{Z}^*) \to H_{2\nb}(B,\partial B \cup \R_{\tau}K;\mathcal{Z}^*)$ réalise $\pm [B] \mapsto [B_0] + [B_1]$ ou bien $\pm [B] \mapsto [B_0] - [B_1]$ en fonction du choix de $\mathcal{D}$. Pour cela, on choisit un chemin dans $B$ transverse à $\R_{\tau}K \cap B$ et qui relie deux points de part et d'autre de $\R_{\tau}K$. Le long de ce chemin on construit une trivialisation de $\TRMc$ (voir \ref{trivial}) afin d'évaluer l'image de $[B]$ dans $H_{2\nb}(B,\partial B \cup \R_{\tau}K;\mathcal{Z}^*)$ en comparant les orientations induites par $[B_0]$ et $[B_1]$.

\subsection{\'Etude du cas $\tau=id$}\label{chemin}

\renewcommand{\RX}{\mathbb{R}X}
\renewcommand{\Rev}{\mathbb{R}ev_{\nb}^d}
\renewcommand{\Revk}{\mathbb{R}ev_{k}^d}

\renewcommand{\RM}{\mathbb{R}\mathcal{M}_{k_d}^d(X)}
\renewcommand{\RMk}{\mathbb{R}\mathcal{M}_{k}^d(X)}
\renewcommand{\RMl}{\mathbb{R}\mathcal{M}_{\nb+\ell}^d(X)}
\renewcommand{\RMo}{\mathbb{R}\mathcal{M}_{0}^d(X)}
\renewcommand{\RMco}{\mathbb{R}\overline{\mathcal{M}}_{0}^d(X)}
\renewcommand{\RMc}{\mathbb{R}\overline{\mathcal{M}}_{k_d}^d(X)}
\renewcommand{\RMck}{\mathbb{R}\overline{\mathcal{M}}_{k}^d(X)}
\renewcommand{\RMcl}{\mathbb{R}\overline{\mathcal{M}}_{k_d+\ell}^d(X)}

\renewcommand{\TRM}{T_{\mathbb{R}\mathcal{M}_{\nb}^d(X)^*}}
\renewcommand{\TRMk}{T_{\mathbb{R}\mathcal{M}_{k}^d(X)^*}}
\renewcommand{\TRMo}{T_{\mathbb{R}\mathcal{M}_{0}^d(X)^*}}
\renewcommand{\TRMc}{T_{\mathbb{R}\overline{\mathcal{M}}_{\nb}^d(X)^*}}
\renewcommand{\TRMck}{T_{\mathbb{R}\overline{\mathcal{M}}_{k}^d(X)^*}}
\renewcommand{\TRMcl}{T_{\mathbb{R}\overline{\mathcal{M}}_{\nb+\ell}^d(X)^*}}
\renewcommand{\TRMco}{T_{\mathbb{R}\overline{\mathcal{M}}_{0}^d(X)^*}}

\renewcommand{\RKd}{\mathbb{R}\mathcal{K}_{d',k'}^{d}}
\renewcommand{\RK}{\mathbb{R}\mathcal{K}_{\nb}^d}
\renewcommand{\RKg}{\mathbb{R}\mathcal{K}_{\nb}^{d}}
\renewcommand{\RKk}{\mathbb{R}\mathcal{K}_{k}^d}

On fixe la structure réelle définie par l'identité de $S_{\nb}$ ce qui impose à l'image de chaque point marqué d'appartenir à la partie réelle $\RX$ (voir \ref{strucreelle}). Soit $d' \in red^d \subset H_2(X,\Z)$, $k'$ un entier strictement supérieur à $k_{d'}+1$ et $\K$ une composante connexe de $\RKd$ de codimension un dans $\RMc$. On distingue deux situations selon que $d'$ soit nulle ou pas.

\subsubsection{Chemins transverses. Cas $d'\neq0$}

On souhaite décrire un chemin transverse à $\RKd$ en un point générique bien choisit de $\K$. Une permutation de l'indexation des points marqués $\underline{z}^{\star}$ revient éventuellement à définir une autre composante connexe $(\mathcal{D}_{d',k'}^d)'$ de $\RKd$ ce qui est sans conséquence dans ce qui suit (cf. proposition \ref{indept}).

\paragraph{Choix d'un point générique.} On rappelle qu'un élément $\maprg \in \Kg$ est générique lorsque $C^i \cong \CP^1$ pour $i \in \{1,2\}$, $\xi=\{C^1 \cap C^2\}$ est un point double ordinaire et $u$ n'a que des singularités nodales et est lisse aux points marqués $\underline{z} \in (C^1\cup C^2)^{\nb}$. On note $d^{i}{'}=u_{*}[C^i]$ et $k^{i}{'}=\#(z\cap C^i)$ de sorte que $d'+d''$ soit égal à $d$ et $k'+k''$ égal à $k_d$. Un élément générique de $\K$ a donc deux branches à la source dont l'une a \og trop\fg\ de points marqués et l'autre trop peu (au sens ou le \og bon\fg\ nombre de points marqués serait $k_{d^{i}{'}}$ ou $k_{d^{i}{'}}+1$ pour $i \in\{1,2\}$). On détermine un point $\maprg$ de $\K$ tel que le morphisme $u$ envoie un certain nombre de points marqués estimés en \og trop\fg\ dans un voisinage contractile du point double $u(\xi)$ afin de travailler dans l'homologie à coefficients entiers.

\begin{defi}\label{ell}
Pour $\maprg \in \K$, on définit le nombre $\ell \in \mathbb{N}$ (on rappelle que $k'>k_{d'}+1$) par
\begin{equation}
\begin{array}{c}
\ell=k'-k_{d'}\\ \textrm{  ou  }\ell=k'-k_{d'}-1
\end{array}
\end{equation} de sorte que $\ell \equiv 0 \mod (2)$.
\end{defi}
\begin{rema}
Un sous-ensemble non vide de points réguliers dans la partie réelle d'une courbe rationnelle réelle de $X$ est dans la composante connexe isomorphe à $\RP^1$. Il possède donc un ordre cyclique défini par l'ordre cyclique sur $\RP^1$.
\end{rema}

\begin{lemm}\label{mapr}
Quitte à changer l'indexation sur les points marqués, il existe un point générique $\mapr$ de $\K$ qui, en notant $C_{\star}=C_{\star}^1\cup_{\{\xi^{\star}\}} C^2_{\star}$, vérifie $\{z^{\star}_1,\dots,z^{\star}_{k'}\} \subset C_{\star}^1$ et $\{z^{\star}_{k'+1},\dots,z^{\star}_{\nb}\} \subset C_{\star}^2$~; les points spéciaux se trouvent à l'image dans l'ordre cyclique $u_{\star}(z^{\star}_1)<\dots<u_{\star}(z^{\star}_{\frac{\ell}{2}})<u_{\star}(\xi^{\star})<u_{\star}(z^{\star}_{\frac{\ell}{2}+1})<\dots<u_{\star}(z^{\star}_{k'})$ et  $u_{\star}(\xi^{\star})<u_{\star}(z^{\star}_{k'+1})<\dots<u_{\star}(z^{\star}_{\nb})$ ; de plus $u_{\star}([z^{\star}_1,z^{\star}_{\ell}])$ est inclus dans un voisinage contractile de $u_{\star}(\xi^{\star})$. (Voir figure \ref{config}.)
\end{lemm}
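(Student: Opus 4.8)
The plan is to produce $\mapr$ by first relabelling the marked points into a convenient combinatorial type --- which is harmless by Proposition~\ref{indept} --- and then sliding the marked points along the real locus of the source while remaining inside the connected component $\K$. The preliminary observations I would record are as follows. Since $\tau=\mathrm{id}$, every marked point is sent to $\RX$, so on a real reducible source the two branches cannot be exchanged by the real structure (a marked point of $C^1$ would be carried to a marked point of $C^2$); hence for any element of $\RKd$ the branches $C^1,C^2$ are individually real and the node $\xi\in\R C^1\cap\R C^2$ is real, so $u(\xi)\in\RX$. Writing $d''=d-d'$ and $k''=\nb-k'$, the hypothesis $k'>k_{d'}+1$ rules out $d''=0$ (that would force $d'=d$, $k_{d'}=\nb$, $k'>\nb+1$), so neither branch is contracted. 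Via the isomorphism $D(A,B;d_A,d_B)\cong\overline{\mathcal{M}}_{k'+1}^{d'}(X)\times_X\overline{\mathcal{M}}_{k''+1}^{d''}(X)$ of \cite{F-P}, the positions of the marked points on each branch, and the two immersions with common value at $\xi$, are free deformation parameters, so sliding a marked point along $\R C^1$ or $\R C^2$ and perturbing $u_1,u_2$ all describe paths inside $\RKd$, hence inside $\K$; and relabelling by $\sigma\in S_\nb$ is induced by an automorphism of $(\Mc,c_{\overline{\mathcal{M}},\mathrm{id}})$ (since $\sigma\,\mathrm{id}\,\sigma^{-1}=\mathrm{id}$) carrying $\K$ isomorphically onto a component of the same $\RKd$. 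So I may decide freely which $k'$ marked points sit on the branch realizing $d'$ and in which cyclic order they appear on $\R C^1\cong\RP^1$.

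For the construction I would start from any generic point of $\K$ and, after relabelling, assume that $z_1,\dots,z_{k'}$ lie on the branch realizing $d'$ and appear cyclically on $\R C^1\cong\RP^1$ in the order $z_1,\dots,z_{\ell/2},\xi,z_{\ell/2+1},\dots,z_{k'}$ (recall from \ref{ell} that $\ell$ is even, with $k'-\ell\in\{k_{d'},k_{d'}+1\}$), while $z_{k'+1},\dots,z_\nb$ lie on the other branch, cyclically ordered $\xi,z_{k'+1},\dots,z_\nb$ on $\R C^2\cong\RP^1$. I would then fix a ball neighbourhood $V$ of $u(\xi)$ in $\RX$; by continuity there is a connected open arc $W\subset\R C^1$ with $\xi$ in its interior, disjoint from the marked points and from the preimages of the singular points of $u_1$, such that $u_1(W)\subset V$. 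Next I would deform the element within $\K$, pushing $z_1,\dots,z_{\ell/2}$ into the component of $W\setminus\{\xi\}$ on one side of $\xi$ and $z_{\ell/2+1},\dots,z_\ell$ into the component on the other side, in such a way that along $W$ one meets $z_1,\dots,z_{\ell/2},\xi,z_{\ell/2+1},\dots,z_\ell$ in this order, while keeping $z_{\ell+1},\dots,z_{k'}$ outside $\overline{W}$, keeping all marked points distinct, and leaving $z_{k'+1},\dots,z_\nb$ on $\R C^2$; along the path I would perturb $u_1,u_2$ as needed so as to preserve the (open) genericity conditions, namely that $u$ is an immersion with only nodal singularities, smooth with pairwise distinct values at the marked points, and with $u(\xi)$ a smooth point of each branch. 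The end point $\mapr$ of this path is then a generic element of $\K$.

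To conclude I would read off the statement: because $u_\star$ transports the cyclic order of $\R C_\star^i\cong\RP^1$ to that of the $\RP^1$-component of its image (the remark preceding the statement), the images of the special points occur in the two asserted cyclic orders, with $u_\star(\xi^\star)$ between $u_\star(z^\star_{\ell/2})$ and $u_\star(z^\star_{\ell/2+1})$ since $\xi^\star$ separates them inside $W$; and the arc $[z^\star_1,z^\star_\ell]$ through $\xi^\star$ is precisely the sub-arc of $W$ it bounds, so $u_\star([z^\star_1,z^\star_\ell])\subset u_\star(W)\subset V$, which is contractible. I expect the one genuinely delicate point to be the bookkeeping of the first paragraph --- checking that every move (relabelling, pushing marked points to the node, perturbing the immersions) is either an automorphism of $\Mc$ or a path inside the single connected component $\K$, so that the element eventually obtained really does lie in $\K$ --- which rests on the product description of the boundary divisor exhibiting the combinatorial positions as free parameters, together with the openness of the genericity conditions.
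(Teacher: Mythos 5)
Your proposal is correct and follows essentially the same route as the paper: both fix a generic real reducible curve underlying a point of $\K$, place the real marked points on the real loci of the two branches in the prescribed cyclic order with the first $\ell$ of them pushed into a small arc around the node mapping into a contractible neighbourhood of $u_{\star}(\xi^{\star})$, and dispose of the labelling ambiguity via Proposition~\ref{indept}. The only difference is packaging --- the paper constructs the point directly from a generic element of $\ob(\K)$ and relabels at the end, whereas you deform an existing point of $\K$ along a path after relabelling at the start, which makes the membership in the connected component $\K$ slightly more explicit.
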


\begin{figure}[htp]
\begin{center}
\input{config.pstex_t}
\end{center}
\caption{$X=\CP^2$, $d=3.[L]$, $\nb=8$, $\ell=2$ }\label{config}
\end{figure}

\begin{proof}
On pose $u_{\star}  \in \ob(\K) \subset \RMco$ un point générique dans l'image de $\K$ par le morphisme d'oubli. En particulier $(u_{\star} : C_{\star} \to X) \in \RMor_{d'+d''}(X)$ de sorte que $C_{\star}=C_{\star}^1\cup_{\xi^{\star}}C_{\star}^2$ avec $C_{\star}^i=\CP^1$ pour $i \in \{1,2\}$ et on note $A_i=u_{\star}|_{C_{\star}^i}(C_{\star}^i)$, chaque composante irréductible de la courbe $A= u_{\star}(C_{\star})$. Soit $D_{\star}$ un disque ouvert de $u_{\star}(\xi^{\star}) \in \RX$ qui définit des coordonnées locales $(x,y) \in \R^2$ sur $\RX$ centrées en $u_{\star}(\xi^{\star})$ et telles que $\R A_1 \cap D_{\star}=\{(x,y) : x=0\}$ et $\R A_2 \cap D_{\star}=\{(x,y) : y=0\}$. On pose $U_1^+=u_{\star}^{-1}(\{y>0\}_{D_{\star}} \cap \R A_1)$ et $U_1^-=u_{\star}^{-1}(\{y<0\}_{D_{\star}} \cap \R A_1)$ des ouverts de $\RC_{\star}^1 := u_{\star}^{-1}(\R A_1)$ et $U_1=U_1^+\cup U_1^-$. On choisit ainsi un $\nb$-uplet $z^{\star}=(z^{\star}_1,\dots,z^{\star}_{\nb})$ dans l'ensemble ordonné : $$(U_1^+)^{\frac{\ell}{2}} ×(U_1^-)^{\frac{\ell}{2}} ×(\RC_{\star}^1\setminus U_1)^{k'-\ell} ×(\RC_{\star}^2)^{k''} \setminus  Diag_{\nb}.$$
On définit $\mapr \in \RKg$ en considérant la classe de $(z^{\star},u_{\star}) \in (\CP^1)^{k'} ×(\CP^1)^{k''} \times \Mor_{d'+d''}(X)$ dans $\Mc$. Par définition, $\mapr$ vérifie les propriétés du lemme \ref{mapr} pour le voisinage $D_{\star}$ et quitte à changer l'indexation des points marqués il appartient à la composante $\K$.
\end{proof}

\paragraph{Choix d'un chemin.}\label{choixchemin}
On choisit un chemin qui soit transverse à $\K$ au point $\mapr$ et suffisamment \og petit\fg\ pour considérer une orientation locale. Rappelons que $\RX$ est une surface réelle convexe (en particulier elle est projective et lisse). Il existe une première classe de Stiefel-Whitney $w_1(\RX) \in H^1(X,\Z/2\Z)$ dans l'homologie singulière et on note $w_1^{\vee}(\RX) \in H_1(\RX,\Z/2\Z)$ son dual de Poincaré. On fixe un représentant de $w_1^{\vee}(\RX)$ dans le groupe des $1$-chaînes $C_1(\RX,\Z)$ et on considère son support $\W \subset \RX$. On note $\Omega_{\gamma}=\bigcup_{i=1}^{\nb} X×\dots ×\W ×\dots ×X$. Puisque $\overline{D}_{\star}$ est contractile, on peut choisir $\W$ de sorte que $\Omega_{\gamma}$ n'intersecte pas $\Rev\mapr$ et $\W$ n'intersecte pas $\overline{D}_{\star}$. Considérons un voisinage contractile (une boule) $B_{\star}$ de $\mapr$ dans $\RMc$ tel que  $B_{\star} \cap \Kg=B_{\star} \cap \K$ de sorte que $\K$ découpe $B_{\star}$ en deux composantes connexes dans $\RM$. Alors, l'ouvert $O_{\gamma}=(\Rev)^{-1}(^{\complement}\Omega_{\gamma}) \subset \RMc$ contient $\mapr$. On fixe un chemin $\gamma:[0,1] \to \RMc^*$ dans $B_{\star}\cap O_{\gamma} \subset \RMc^*$ transverse à $\K$ au point $\mapr$. Par simplicité on identifiera $\gamma$ et son image $\gamma([0,1]) \subset \RMc$. On notera aussi $\gamma_*$ l'image de $\gamma$ par le morphisme d'évaluation $\gamma_*=\Rev(\gamma) \subset \RX^{\nb}$.
Le chemin $\gamma$ est contenu dans $B_{\star}$, transverse à $\K$ en $\mapr$ et $\gamma_*$ n'intersecte pas $\Omega_{\gamma}$. On pose $\mapt=\gamma(t)$ pour $t \in [0,1]$. On distingue plus particulièrement $\mapd$ et $\mapa$ le point de \og départ\fg\ et le point d'\og arrivée\fg\ du chemin, chacun se situant dans une composante connexe différente de $\RM \cap B_{\star}$.

\paragraph{Construction d'un chemin de comparaison.}\label{chemintd}
On construit un autre chemin $\tilde{\gamma}$ associé à $\gamma$ mais à valeurs dans le lieu régulier $\Reg$. Pour cela, on associe à $\mapr$ un point générique $\maprtd$ de la frontière tel que les courbes soient identiques mais les points marqués diffèrent afin d'obtenir un point régulier du morphisme d'évaluation.

\begin{lemm}\label{maprtilde}
Soit $\mapr \in \K$ défini par le lemme \ref{mapr}. Il existe une application stable $\maprtd \in \Kg$ qui vérifie pour $0 < i \leq \ell$, $\tilde{z}^{\star}_i \in C^2$ et pour $\ell<i\leq\nb$, $\tilde{z}^{\star}_i=z^{\star}_i$~; à l'image de $C^2$ on a l'ordre cyclique $u_{\star}(\xi^{\star})<u_{\star}(\tilde{z}^{\star}_1)<\dots<u_{\star}(\tilde{z}^{\star}_{\frac{\ell}{2}})<u_{\star}(\tilde{z}^{\star}_{k'+1})<\dots<u_{\star}(\tilde{z}^{\star}_{\nb})<u_{\star}(\tilde{z}^{\star}_{\frac{\ell}{2}+1})<\dots<u_{\star}(\tilde{z}^{\star}_l)$ et $u_{\star}([\tilde{z}^{\star}_1,\tilde{z}^{\star}_{\ell}])$ est inclus dans le voisinage de $u_{\star}(\xi^{\star})$ défini par le lemme \ref{mapr}. (Voir figure \ref{configtd}.)
\end{lemm}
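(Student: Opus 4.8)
Le plan est de calquer la construction du lemme~\ref{mapr}, en \emph{d\'epla\c{c}ant} de la branche $C_{\star}^1$ vers la branche $C_{\star}^2$ les $\ell$ points marqu\'es jug\'es \og en trop\fg{}, les $\nb-\ell$ autres restant fixes. On conserve donc le morphisme $u_{\star}:C_{\star}=C_{\star}^1\cup_{\xi^{\star}}C_{\star}^2\to X$ associ\'e \`a $\mapr$, le disque $D_{\star}$ centr\'e en $u_{\star}(\xi^{\star})$ et ses coordonn\'ees locales $(x,y)$ telles que $\R A_1\cap D_{\star}=\{x=0\}$ et $\R A_2\cap D_{\star}=\{y=0\}$, avec les notations de la preuve du lemme~\ref{mapr}~; on peut en outre y supposer, quitte \`a r\'etr\'ecir $D_{\star}$, que les points $z^{\star}_{k'+1},\dots,z^{\star}_{\nb}$ de $C_{\star}^2$ \'evitent $u_{\star}^{-1}(\overline{D}_{\star})$, ce qui est loisible par g\'en\'ericit\'e.

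Par analogie avec $U_1^{\pm}$, on pose $U_2^+=u_{\star}^{-1}(\{x>0\}_{D_{\star}}\cap\R A_2)$, $U_2^-=u_{\star}^{-1}(\{x<0\}_{D_{\star}}\cap\R A_2)$, ouverts de $\RC_{\star}^2:=u_{\star}^{-1}(\R A_2)$, et $U_2=U_2^+\cup U_2^-$~: ce sont les deux petits arcs de la composante $\RP^1\subset\RC_{\star}^2$ situ\'es respectivement juste \og apr\`es\fg{} et juste \og avant\fg{} $u_{\star}(\xi^{\star})$ pour l'orientation cyclique. On garde $\tilde{z}^{\star}_i=z^{\star}_i$ pour $\ell<i\leq\nb$, et on choisit $(\tilde{z}^{\star}_1,\dots,\tilde{z}^{\star}_{\ell})$ dans l'ensemble ordonn\'e $(U_2^+)^{\frac{\ell}{2}}\times(U_2^-)^{\frac{\ell}{2}}\setminus Diag_{\ell}$, de fa\c{c}on que $u_{\star}(\tilde{z}^{\star}_1)<\dots<u_{\star}(\tilde{z}^{\star}_{\frac{\ell}{2}})$ en s'\'eloignant de $u_{\star}(\xi^{\star})$ dans $U_2^+$ et que $u_{\star}(\tilde{z}^{\star}_{\frac{\ell}{2}+1})<\dots<u_{\star}(\tilde{z}^{\star}_{\ell})$ en se rapprochant de $u_{\star}(\xi^{\star})$ dans $U_2^-$. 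On d\'efinit alors $\maprtd$ comme la classe de $(\tilde{z}^{\star},u_{\star})$ dans $\Mc$~; le $\nb$-uplet $\tilde{z}^{\star}$ appartient ainsi \`a
$$(U_2^+)^{\frac{\ell}{2}}\times(U_2^-)^{\frac{\ell}{2}}\times(\RC_{\star}^1\setminus U_1)^{k'-\ell}\times(\RC_{\star}^2\setminus U_2)^{k''}\setminus Diag_{\nb}.$$

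Il reste alors \`a constater les propri\'et\'es annonc\'ees, ce qui est imm\'ediat. La source $C_{\star}$ \'etant r\'eductible, $u_{\star}$ r\'eel et tous les points marqu\'es r\'eels ($\tau$ \'etant l'identit\'e), $\maprtd$ appartient \`a la fronti\`ere r\'eelle $\RKg$, donc \`a $\Kg$~; et comme la branche $C_{\star}^1$ ne porte plus que $k'-\ell$ points marqu\'es, soit $k_{d'}$ si $\ell=k'-k_{d'}$, soit $k_{d'}+1$ si $\ell=k'-k_{d'}-1$, ce point est un point g\'en\'erique de $\R\Ktd$, composante qui d'apr\`es la proposition~\ref{imcodim1} n'est pas \'ecras\'ee par le morphisme d'\'evaluation~; pour un choix g\'en\'erique des $\tilde{z}^{\star}_i$, le point $\maprtd$ est donc r\'egulier, c'est-\`a-dire dans $\Reg$, ce qui est le but recherch\'e. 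Quant \`a l'ordre cyclique \`a l'image de $C_{\star}^2$ et \`a l'inclusion $u_{\star}([\tilde{z}^{\star}_1,\tilde{z}^{\star}_{\ell}])\subset D_{\star}$, ils r\'esultent de ce que $U_2^+$ (resp. $U_2^-$) est l'arc de $\R A_2$ imm\'ediatement apr\`es (resp. avant) $u_{\star}(\xi^{\star})$ et de ce que les images $u_{\star}(z^{\star}_{k'+1}),\dots,u_{\star}(z^{\star}_{\nb})$ sont dispos\'ees, dans cet ordre, sur le reste de la composante, hors de $\overline{D}_{\star}$.

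L'unique point demandant un peu de soin est la compatibilit\'e des deux constructions, \`a savoir que l'on ait le droit de supposer d\`es le lemme~\ref{mapr} que les $k''$ points marqu\'es port\'es par $C_{\star}^2$ \'evitent un voisinage de $\xi^{\star}$~; c'est assur\'e par g\'en\'ericit\'e du choix effectu\'e l\`a-bas. Un changement \'eventuel de l'indexation des points marqu\'es est par ailleurs sans cons\'equence d'apr\`es la proposition~\ref{indept}.
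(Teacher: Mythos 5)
Votre construction est exactement celle du papier : on conserve $u_{\star}$ et le disque $D_{\star}$ de la preuve du lemme \ref{mapr}, on introduit les arcs $U_2^{\pm}=u_{\star}^{-1}(\{\pm x>0\}\cap\R A_2)$ et on place les $\ell$ premiers points marqu\'es dans $(U_2^+)^{\frac{\ell}{2}}\times(U_2^-)^{\frac{\ell}{2}}$ en gardant les autres inchang\'es, ce qui donne bien l'ordre cyclique annonc\'e. Les compl\'ements que vous ajoutez (points de $C_{\star}^2$ \'evitant $\overline{D}_{\star}$, r\'egularit\'e du point pour le morphisme d'\'evaluation) sont corrects mais rel\`event plut\^ot de la proposition \ref{reg} qui suit ; l'approche est pour l'essentiel identique \`a celle du texte.
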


\begin{figure}[htp]
\begin{center}
\input{configtd.pstex_t}
\end{center}
\caption{$X=\CP^2$, $d=3.[L]$, $\nb=8$, $\ell=2$}\label{configtd}
\end{figure}

\begin{proof}
On reprend les termes de la démonstration du lemme \ref{mapr} et on considère, dans le système de coordonnées locales sur $D_{\star}$, les ouverts $U_2^+=u_{\star}^{-1}(\{x>0\} \cap \R A_2)$, $U_2^-=u_{\star}^{-1}(\{x<0\} \cap \R A_2)$ et $U_2=U_2^+\cup U_1^-$ de $\RC^2$. On choisit cette fois $\tilde{z}^{\star}=(\tilde{z}^{\star}_1,\dots,\tilde{z}^{\star}_{\nb})$ dans l'ensemble ordonné : $$(U_2^+)^{\frac{\ell}{2}} ×(U_2^-)^{\frac{\ell}{2}} ×\{z^{\star}_{\ell+1}\} ×\dots ×\{z^{\star}_{\nb}\} \setminus  Diag_{\nb}.$$
On définit ainsi $\maprtd \in \K$ en considérant la classe de $(\tilde{z}^{\star},u_{\star}) \in (\CP^1)^{k'-\ell} ×(\CP^1)^{k''+\ell} ×\Mor_{d'+d''}(X)$ dans $\Mc$.
\end{proof}

\begin{prop}\label{reg}
L'application stable $\maprtd \in \K$ du lemme \ref{maprtilde} est un point régulier du morphisme d'évaluation.
\end{prop}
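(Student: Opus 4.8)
The goal is to show that the stable map $\maprtd$ constructed in Lemma \ref{maprtilde} lies in the regular locus of the evaluation morphism, i.e. that $d\ev$ is an isomorphism at $\maprtd$. Since $\maprtd$ has a reducible source $C_{\star} = C_{\star}^1 \cup_{\xi^{\star}} C_{\star}^2$, Proposition \ref{kerev} does not apply directly; instead I would use the factorization of the evaluation through the fiber product $\overline{\mathcal{M}}_{k'-\ell+1}^{d'}(X) \times_X \overline{\mathcal{M}}_{k''+\ell+1}^{d''}(X)$ described in the proof of Proposition \ref{imcodim1}, together with the identification of the tangent space to the boundary divisor. The first step is to compute the dimension of each factor: by our choice in Lemma \ref{maprtilde}, the first branch carries $k'-\ell$ marked points and the second carries $k''+\ell$; since $\ell = k'-k_{d'}$ or $k'-k_{d'}-1$, after possibly switching we may assume $k'-\ell \in \{k_{d'}, k_{d'}+1\}$, so each branch has the ``correct'' number of marked points, and both factor moduli spaces have the right dimension for their evaluation maps (with the extra point at $\xi$) to be generically finite.

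\textbf{Key steps.} (1) Unwind the factorization $\mapr \in D(A,B;d',d'') \cong \overline{\mathcal{M}}_{|A|+1}^{d'}(X) \times_X \overline{\mathcal{M}}_{|B|+1}^{d''}(X) \to X^{\nb}$ at the point $\maprtd$, so that $d\ev$ at $\maprtd$ splits, up to the fiber-product constraint at $\xi$, into the differentials of the evaluation morphisms on the two factors plus the deformation of the node. (2) On each branch $C_{\star}^i = \CP^1$ the restriction $u_i = u_{\star}|_{C_{\star}^i}$ is an immersion with only nodal singularities (genericity of $u_{\star}$), so Proposition \ref{kerev} applies to each branch with its marked points; since the number of marked points on each branch equals $k_{d^{i}{'}}$ or $k_{d^{i}{'}}+1$, the kernel $H^0(\CP^1, \mathcal{N}_{u_i} \otimes \OP(-z_i))$ has dimension $0$ or $1$ and similarly for the cokernel. (3) The remaining degree of freedom — the position of $\xi$ on each branch, and the choice among the finitely many preimages — is exactly compensated by the fiber-product condition $u_1(\xi) = u_2(\xi)$ and the choice that $u_{\star}$ places the node at a point where the two branch images meet transversally (a genuine node of $A$, guaranteed by genericity of $u_{\star}$). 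The crucial input is that the cyclic-order conditions of Lemma \ref{maprtilde} were arranged precisely so that the marked points $\tilde{z}^{\star}_i$ on each branch are \emph{not} forced to lie in any special position relative to $\mathcal{N}_{u_i} \otimes \OP(-z_i)$: in particular, by choosing the $\ell/2 + \ell/2$ points on $C_{\star}^2$ in the neighborhood $D_{\star}$ on opposite sides of the node, one arranges that twisting $\mathcal{N}_{u_2}$ down by these points strictly lowers $h^0$ each time, so that $H^0(\CP^1, \mathcal{N}_{u_2} \otimes \OP(-\tilde{z}^{\star})) = 0$ and, by the dimension count and adjunction ($\mathcal{N}_{u_i} \cong \OP(c_1(X)d^{i}{'} - 2)$), also $H^1 = 0$; symmetrically for $C_{\star}^1$. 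Then a short exact sequence (Mayer--Vietoris for the normal sheaf on the nodal curve, matching the two branch contributions along $\xi$) gives $\ker d\ev = \coker d\ev = 0$ at $\maprtd$.

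\textbf{Main obstacle.} The delicate point is step (3): one must verify that the matching condition at the node $\xi$ together with the twisting-down data from the marked points genuinely produces vanishing of both cohomology groups for the \emph{normal sheaf of the nodal curve} $\mathcal{N}_{u_{\star}}$ twisted by the full collection $\tilde{z}^{\star}$ — this requires checking that the node is not a base point of the relevant linear system and that the $\ell$ marked points moved to the second branch impose independent conditions, which is where the opposite-sides-of-the-node placement in $D_{\star}$ is used. I would isolate this in a normal-bundle cohomology lemma on a two-component rational nodal curve: given $\mathcal{N}$ of bidegree $(c_1(X)d' - 2, c_1(X)d'' - 2)$ on $\CP^1 \cup_{\xi} \CP^1$, with marked points distributed as in Lemma \ref{maprtilde}, one has $H^0 = H^1 = 0$ for $\mathcal{N}$ twisted down by all marked points. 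Once this lemma is in hand, regularity of $\maprtd$ — equivalently, $d\ev$ being an isomorphism of tangent spaces of equal dimension $2\nb$ — follows formally, and the construction of the comparison path $\tilde\gamma$ in $\Reg$ proceeds by moving $\gamma$ through the same deformation.
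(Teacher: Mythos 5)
Your proposal heads down a much harder road than the paper and stalls at exactly the point you yourself flag as the ``main obstacle''. The paper's proof is two lines. Your opening dimension count is the same as the paper's first (and essentially only) step: by the choice of $\ell$ in Definition \ref{ell}, the branch $C_{\star}^1$ of class $d'$ carries $k'-\ell\in\{k_{d'},k_{d'}+1\}$ of the marked points $\underline{\tilde{z}}^{\star}$, hence $\maprtd$ lies in $\mathcal{K}_{d',k_{d'}}^{d}\cup\mathcal{K}_{d',k_{d'}+1}^{d}$. But then the paper simply observes that these are precisely the boundary divisors whose image under $\ev$ has codimension one (Proposition \ref{imcodim1}), and that such divisors are a regular locus for the evaluation morphism by the result of Welschinger \cite{W1} recalled immediately after that proposition. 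No cohomology on the nodal curve needs to be recomputed; the regularity is imported, not re-proved.

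If you do want a direct verification, your steps (2)--(3) contain a genuine error and then a genuine gap. Exactly one of the two branches carries $k_{\delta}+1$ marked points, where $\delta\in\{d',d''\}$ is its class (the two counts sum to $\nb=k_{d'}+k_{d''}+1$), and on that branch the twisted sheaf has degree $(c_1(X)\delta-2)-(c_1(X)\delta)=-2$, so $H^1\cong\C$ does \emph{not} vanish; the claim ``$H^0=H^1=0$, symmetrically for $C_{\star}^1$'' fails there. That one-dimensional cokernel is killed only by the direction transverse to the boundary divisor (the smoothing parameter $T_{\xi^{\star}}C_{\star}^1\otimes T_{\xi^{\star}}C_{\star}^2$ of the node), and showing that it is killed is precisely the nontrivial computation of \cite{W1} which you defer to an unproven ``normal-bundle cohomology lemma''; as stated, asking for $H^0=H^1=0$ of a bidegree $(c_1(X)d'-2,c_1(X)d''-2)$ sheaf twisted down by all $\nb$ marked points cannot hold for dimension reasons without the node contribution, so the lemma itself needs reformulating before it can be proved. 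Finally, the ``opposite sides of the node'' placement is irrelevant here: on $\CP^1$ twisting a line bundle down by a point lowers $h^0$ by one whenever $h^0>0$, independently of the point's position. That placement matters for the later orientation comparison (Lemmas \ref{turnlemma} and \ref{signarrivee}), not for regularity.
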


\begin{proof}
Il suffit d'après la proposition \ref{imcodim1} de vérifier que $\maprtd \in \mathcal{K}_{d',k_{d'}}^d \cup \mathcal{K}_{d',k_{d'}+1}^d$. Or, par définition, $(u_{\star})_*[C_{\star}]=d'$ et $C_{\star}^1 \cap \underline{\tilde{z}}^{\star}=\{\tilde{z}^{\star}_{\ell+1},\dots,\tilde{z}^{\star}_{k'}\}$ donc $\#(C_{\star}^1 \cap \tilde{z}^{\star}) \in \{k_{d'},k_{d'}+1\}$ en fonction du choix de $\ell \in \{k'-k_{d'},k'-k_{d'}-1\}$ (voir définition \ref{ell}).
\end{proof}

On construit un chemin $\tilde{\gamma}$ transverse à la frontière $\Kg$ au point $\maprtd$ et qui soit compatible avec $\gamma$ dans le sens où les courbes et les points marqués communs coïncident le long des chemins. Pour cela, on se place dans $\RMcl^*$ et on définit $\underline{z}^{\star}\cup\underline{\tilde{z}}^{\star}:=(z^{\star}_1,\dots,z^{\star}_{\ell},\tilde{z}^{\star}_1,\dots,\tilde{z}^{\star}_{\ell},z^{\star}_{\ell+1},\dots,z^{\star}_{\nb}) \in (C_{\star})^{\nb+\ell}\setminus Diag_{\nb+\ell}$. On considère $(C_{\star},\underline{z}_{\star}\cup\underline{\tilde{z}}_{\star},u_{\star}) \in \mathcal{K}_{\nb+\ell}^d$ en prenant la classe de $(\underline{z}_{\star}\cup\underline{\tilde{z}}_{\star},u_{\star}) \in (\CP^1)^{\nb+\ell}\setminus Diag_{\nb+\ell} ×\RMor_{d'+d''}(X)$ dans $\RMcl^*$. Par définition on a  $\ob^{\nb+\ell}_{\ell+1,\dots,2\ell}(C_{\star},\underline{z}^{\star}\cup\underline{\tilde{z}}^{\star},u_{\star})=(C_{\star},\underline{z}^{\star},u_{\star})$. Quitte à restreindre $\gamma$ on peut relever dans $\RMl^* \cup \mathcal{K}_{\nb+\ell}^d$ ce chemin par le morphisme d'oubli $\ob^{\nb+\ell}_{\ell+1,\dots,2\ell}$ au point $(C_{\star},\underline{z}^{\star}\cup\underline{\tilde{z}}^{\star},u_{\star}) \in \mathcal{K}_{\nb+\ell}^d$. Étant donnée la suite exacte
\begin{displaymath}
\xymatrix{0 \ar[r] & \Ker d\ob^{\nb+\ell}_{\ell+1,\dots,2\ell} \ar[r] & \TRMcl \ar[r]_-{d\ob^{\nb+\ell}_{\ell+1,\dots,2\ell}} & \TRMc \ar[r] & 0}
\end{displaymath}
on peut définir un chemin dans $\RMcl^*$ dont la différentielle s'annule dans le sous-fibré $\Ker d\ob_{\ell+1,\dots,2\ell}^{\nb+\ell}$ de $\TRMcl$. On pose $\Gamma$ un tel chemin passant par $(\CP^1,\underline{z}^{\star}\cup\underline{\tilde{z}}^{\star},u_{\star})$ et qui relève $\gamma$ de sorte que l'image par le morphisme d'oubli $\ob_{\ell+1,\dots,2\ell}^{\nb+\ell}$ soit constante.
\begin{displaymath}
\xymatrix{ \RMcl^*\ar[r] \ar[d]_-{\ob^{\nb+\ell}_{\ell+1,\dots,2\ell}} &  \RMc^* \\ \R\overline{\mathcal{M}}_{\nb-\ell}^d(X)^* & \ar[lu]_-{\Gamma} \ar[u]_-{\gamma} \ar[l]^-{\textrm{cte}} [0,1]}
\end{displaymath}
On définit le chemin $\tilde{\gamma}:[0,1] \to \RMc^*$ transverse à $\Kg$ au point $\maprtd$ comme l'image de $\Gamma$ par le morphisme d'oubli des $\ell$ premiers points marqués
\begin{displaymath}
\xymatrix{\Gamma \subset \RMcl^* \ar[r]^{\ob^{\nb+\ell}_{1,\dots,\ell}} & \tilde{\gamma} \subset \RMc^*.}
\end{displaymath}
On confond comme précédemment le chemin et son image, tout comme on note $\tilde{\gamma}_*$ pour l'image $\Rev(\tilde{\gamma}) \subset \RX^{\nb}$. Le chemin $\tilde{\gamma}$ est inclus dans $\Reg$, l'image dans $\mathbb{R}\overline{\mathcal{M}}_{0,k_d-\ell}^d(X)^*$ de $\tilde{\gamma}$ par $\ob_{1,\dots,\ell}$ est égale à l'image de $\gamma$ par $\ob_{1,\dots,\ell}$, enfin $\tilde{\gamma}_*$ n'intersecte pas $\Omega_{\gamma}$. Comme précédemment on note $\mapttd=\tilde{\gamma}(t)$, pour $t \in [0,1]$.

\subsubsection{Chemins transverses. Cas $d'=0$}\label{degal0}
On considère une composante connexe $\mathcal{D}_{0,k'}^d$ de $\R\mathcal{K}_{0,k'}^d$ de codimension un dans $\RMc$. (On rappelle que nécessairement $k' \geq 2$ par stabilité.) On choisit un point générique $\mapr$ dans $B_{\star} \cap \K$ ; c'est-à-dire une application stable qui est une immersion et dont la source est composée de deux branches $C_{\star}=\CP^1 \cup_{\xi} \CP^1$, l'une envoyée par une application birationnelle $u_{\star}$ sur une courbe rationnelle réelle de $X$ dans la classe $d$ et l'autre envoyée sur la classe nulle au point $u_{\star}(\xi)$. Comme précédemment, quitte à changer l'indexation, on choisit que $\{z^{\star}_1,\dots,z^{\star}_{k'}\}$ appartienne à la branche de classe nulle et un ordre cyclique sur la branche de classe non nulle $u_{\star}(z^{\star}_{k'+1})<\dots<u_{\star}(z^{\star}_{\nb})$.
On définit ensuite un chemin $\gamma : t \in [0,1] \mapsto (C_t,z^t,u_t) \in \RMc^*$ transverse à $\K$ en $\mapr$ au paramètre $t_{\star}$ mais dans le \og sens\fg\ des applications. C'est-à-dire que seuls les points marqués dépendent du paramètre $t$, l'application $u_t$ étant fixée à l'image: $u_0(\CP^1)=u_{t}(\CP^1)=u_{\star}(C_{\star})=u_1(\CP^1),\ \forall t \neq t_{\star} \in [0,1]$. On exige que $\gamma(0)=\gamma(1)$ après renversement dans l'ordre cyclique : $u_0(z^0_1)=u_1(z^1_1),\dots,u_0(z^0_{k'})=u_1(z^1_{k'})$. Autrement dit, $\mapa=(\CP^1,z^0_{k'},\dots,z^0_1,z^0_{k'+1},\dots,z^0_{\nb},u_0)$. De plus, en reprenant les notations de \ref{choixchemin}, on choisit $\gamma$ suffisamment \og petit\fg\ pour que son image $\gamma_*$ n'intersecte pas $\Omega_{\gamma}$ pour le choix de $\W$ tel que $\W \cap D_{\star} = \emptyset$, où $D_{\star}$ est un voisinage contractile de $u_{\star}(\xi)$. Il n'est pas utile de définir un autre chemin $\tilde{\gamma}$.
\begin{figure}[htp]
\begin{center}
\input{gamma0.pstex_t}
\end{center}
\caption{$X=\CP^2$, $d=2.[L]$, $\nb=5$, $k'=3$}
\end{figure}

\subsubsection{Bases de l'espace tangent}\label{secdecomptan}

L'objectif de ce paragraphe est de se munir d'une base positive, relativement à une orientation locale sur $T_{\RX^{\nb}}$, de l'espace tangent $\TRMd$ au point de départ du chemin $\gamma$. On se donne différentes décompositions du fibré tangent de $\RMck^*$ pour lesquelles on définit une terminologie ad hoc.

\begin{prop}\label{decomptan}
Soit $\mapg \in \Mck^*$, on a la suite exacte
\begin{equation}\label{decomp}
\xymatrix{0 \ar[r] & T_{\underline{z}}C^{k} \ar[r] & T_{\mapg}\Mck \ar[r] & H^0(C,\Nu) \ar[r] & 0.}
\end{equation}
\end{prop}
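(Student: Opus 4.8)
The plan is to read (\ref{decomp}) off the presentation of $\Mck$ recalled in \S\ref{prelim} together with the deformation theory of stable maps: the subspace $T_{\underline z}C^k\subset T_{\mapg}\Mck$ is the space of first-order deformations moving the marked points while keeping the source $C$ and the map $u$ fixed, and the quotient is the space of deformations of $u$ (with its source) modulo reparametrisation. First I would treat the generic case $C\cong\CP^1$, i.e. $\mapg\in\Mk^*$, by a direct computation. By convexity $\Mor_d(X)$ is smooth with $T_u\Mor_d(X)=H^0(\CP^1,u^*T_X)$, and since $\mapg$ has no automorphism the quotient by $Aut(\CP^1)$ is free near $\mapg$, so
\begin{equation*}
T_{\map}\Mk\;\cong\;\bigl(T_{\underline z}C^k\oplus H^0(\CP^1,u^*T_X)\bigr)\big/H^0(\CP^1,T_{\CP^1}),
\end{equation*}
the Lie algebra $H^0(\CP^1,T_{\CP^1})$ of $Aut(\CP^1)$ embedding by $\xi\mapsto(\xi(z_1),\dots,\xi(z_k),-du\circ\xi)$. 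As the stable map is simple, $du\colon T_{\CP^1}\to u^*T_X$ is injective with cokernel the line bundle $\Nu\cong\OP(c_1(X)d-2)$; the cohomology sequence of $0\to T_{\CP^1}\xrightarrow{du}u^*T_X\to\Nu\to0$ together with $H^1(\CP^1,T_{\CP^1})=0$ shows that $H^0(\CP^1,u^*T_X)\to H^0(\CP^1,\Nu)$ is surjective with kernel $du\bigl(H^0(\CP^1,T_{\CP^1})\bigr)$. The composite $T_{\underline z}C^k\oplus H^0(\CP^1,u^*T_X)\to H^0(\CP^1,u^*T_X)\to H^0(\CP^1,\Nu)$ then kills the $Aut(\CP^1)$-orbit directions, descends to a surjection $T_{\map}\Mk\to H^0(\CP^1,\Nu)$, and — since $du$ is injective on global sections — has kernel exactly the summand $T_{\underline z}C^k$. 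This gives (\ref{decomp}) when $C$ is irreducible.

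For a general genus-$0$ nodal source I would run the analogous argument inside the deformation theory of stable maps (cf. \cite{F-P,B-M}). There $T_{\mapg}\Mck$ is the unobstructed deformation space of $\mapg$ — convexity gives $H^1(C,u^*T_X)=0$, which is also the source of the smoothness in Proposition \ref{Mck} — and $T_{\underline z}C^k$ sits inside it as the kernel of the differential of the forgetful morphism $\ob$ (Lemme \ref{propkero}, applied where the $z_i$ are regular on $C_{\ob}^{stab}$). The quotient $T_{\mapg}\Mck/T_{\underline z}C^k$ is the deformation space of the pair $(C,u)$ modulo reparametrisation, computed by the two-term complex $[\,T_C\xrightarrow{du}u^*T_X\,]$; using that $X$ is convex and $C$ has genus $0$, the degree-one hypercohomology of this complex is $H^0(C,\Nu)$, which yields (\ref{decomp}). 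Alternatively one may argue by induction on the number of components via the isomorphism $D(A,B;d_A,d_B)\cong\overline{\mathcal{M}}^{d_A}_{|A|+1}(X)\times_X\overline{\mathcal{M}}^{d_B}_{|B|+1}(X)$ of \S\ref{diviseurs} and the irreducible case applied to each factor.

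The delicate point is the reducible case, and it concerns the components of $C$ on which $u$ is constant and the nodes of $C$: on a constant component $du$ vanishes and $\Nu$ is not locally free, while at each node $\Nu$ acquires a length-one torsion subsheaf encoding the first-order smoothing of that node inside $X$. The verification that, on the simple locus $\Mck^*$ and in genus $0$, these contributions are precisely those needed for $H^0(C,\Nu)$ to be the space of deformations of $u$ transverse to the reparametrisation orbit — neither more nor fewer — is the only non-formal step, and it is exactly where the genus-$0$ and convexity hypotheses enter.
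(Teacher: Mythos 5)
Your argument is correct in the irreducible case but follows a genuinely different route from the paper. The paper's proof is two lines: it invokes the exact sequence of the forgetful morphism $0 \to T_{\underline{z}}C^{k} \to T_{\mapg}\Mck \xrightarrow{d\ob} T_{\mapg}\Mco \to 0$ supplied by the Lemme \ref{propkero}, and then identifies $T_{(C,u)}\Mco$ with the space of first-order deformations of the curve, namely $H^0(C,\Nu)$, by citing Chap.~II.1 de \cite{Ko}. You instead compute directly from the quotient presentation $\Mk=\bigl((\CP^1)^k\setminus Diag_k\bigr)\times\Mor_d(X)/Aut(\CP^1)$, using the cohomology sequence of $0\to T_{\CP^1}\xrightarrow{du}u^*T_X\to\Nu\to 0$ and the freeness of the action at an automorphism-free point; this is a correct and more self-contained derivation of exactly the identification the paper outsources to \cite{Ko}, and it makes explicit where convexity ($T_u\Mor_d(X)=H^0(\CP^1,u^*T_X)$, smoothness of $\Mor_d(X)$) and the vanishing $H^1(\CP^1,T_{\CP^1})=0$ enter. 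One small imprecision: for $u$ simple but not an immersion, $\Nu$ is only a coherent quotient with torsion at the critical points of $du$, not the line bundle $\OP(c_1(X)d-2)$ (the paper reserves that identification for immersions); your exactness argument survives unchanged, but the phrasing should not assert local freeness.

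Where your proposal stops short is precisely the reducible case, which you flag as ``the only non-formal step'' without carrying it out. The statement is for every $\mapg\in\Mck^*$, so this case is part of the proposition, and the hypercohomology of $[T_C\xrightarrow{du}u^*T_X]$ is only quasi-isomorphic to $\Nu[-1]$ when $du$ is an injective sheaf map, which fails on contracted components. Note that the paper's own mechanism closes this case without any such analysis: the Lemme \ref{propkero} gives the left-hand part of \eqref{decomp} uniformly (as the kernel of $d\ob$), and the reference to \cite{Ko} supplies $T_{(C,u)}\Mco\cong H^0(C,\Nu)$ for the stabilized nodal source. Your first paragraph together with the Lemme \ref{propkero} essentially reassembles that argument, so the cleanest repair is to quote it (or the fibre-product description $D(A,B;d_A,d_B)\cong\overline{\mathcal{M}}^{d_A}_{|A|+1}(X)\times_X\overline{\mathcal{M}}^{d_B}_{|B|+1}(X)$ of \cite{F-P}, as you suggest in the alternative) rather than leave the nodal contributions as an unverified claim.
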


\begin{proof}
On a la suite exacte sur le morphisme d'oubli : $0 \to T_{\underline{z}}C^{k} \to T_{\mapg}\Mck \stackrel{d\ob}{\rightarrow} T_{\mapg}\Mco \to 0$ d'après le lemme \ref{propkero}. D'autre part, l'espace tangent $T_{(C,u)}\Mco$ est l'espace des déformations à l'ordre un de la courbe immergée par $u$, c'est-à-dire $H^0(C,\Nu)$ (cf. Chap. II.1 de \cite{Ko}).
\end{proof}

\begin{defi}\label{stand}
Soit  $\mapg \in \Mck^*$ défini par une immersion. On appelle \emph{base standard} une base de $T_{\mapg}\Mck$ adaptée à la suite exacte (\ref{decomp}). Autrement dit, la donnée d'un $(k+\nb)$-uplet $(e_1,\dots,e_{k},f_1,\dots,f_{\nb})$ composé de $k$ éléments $\{e_i\}_{i=1,\dots,k}$ générateurs de chaque $T_{z_i}C$ et $\nb$ sections linéairement indépendantes $\{f_i\}_{i=1,\dots,\nb}$, dans $H^0(C,\Nu)$.
\end{defi}

Lorsque $k=\nb$ on peut affiner la définition au points réguliers du morphisme d'évaluation . En effet, si on note ${T_X}_{(i)}=\{\vec{0}\} ×\dots ×T_X ×\dots ×\{\vec{0}\}$ le sous-fibré de $T_{X^{\nb}}$ associé à la $i$-ième composante, la décomposition de $T_{X^{\nb}}=\bigoplus_{i=1}^{\nb}{T_X}_{(i)}$ se relève par le morphisme d'évaluation en tout point régulier $\mapg \in \Reg$ \begin{equation}\label{decomp3}{\TMmc}=\bigoplus_{i=1}^{\nb}(\ev)^{*}{T_{u(z_i)}X_{(i)}}.\end{equation}

\begin{nota}
Pour $\mapg \in\Reg$ et $i\in\{1,\dots,\nb\}$, on note $\Nui$ le faisceau $\Nu \otimes \mathcal{O}_{C}(-\hat{\underline{z}}^i)$ où $\hat{\underline{z}}^i$ est l'élément de $C^{\nb-1}$ obtenu en ôtant le $i$-ième point marqué dans $\underline{z}=(z_1,\dots,z_i,\dots,z_{\nb}) \in C^{\nb}$. Les sections de ce faisceau correspondent aux déformations de la courbe qui n'affectent pas la position des points marqués sauf le $i$-ième.
\end{nota}

\begin{prop}\label{decompfib}
Soit $\mapg \in \M^* \cap \Reg$, on a la suite exacte
\begin{equation}\label{eqdecompfib}
\xymatrix{0 \ar[r] & T_{z_i}C \ar[r] & (\ev)^*T_{u(z_i)}X_{(i)} \ar[r] & H^0(C,\Nui) \ar[r] & 0.}
\end{equation}
\end{prop}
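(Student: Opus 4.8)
Le plan est de r\'ealiser la suite \eqref{eqdecompfib} comme le $i$-i\`eme facteur direct d'un isomorphisme entre la suite exacte \eqref{decomp} de la proposition \ref{decomptan} et une somme de suites ponctuelles d\'efinissant $\Nu$~; la r\'egularit\'e de $\mapg$ garantit que tout se scinde facteur par facteur.

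D'abord, je traduirais l'hypoth\`ese de r\'egularit\'e. D'apr\`es la proposition \ref{kerev}, dire que $\mapg\in\Reg$ \'equivaut \`a $H^0(\CP^1,\Nu\otimes\OP(-\underline{z}))=0$ et $H^1(\CP^1,\Nu\otimes\OP(-\underline{z}))=0$. Comme un point r\'egulier est immerg\'e (les points critiques du morphisme d'\'evaluation contiennent le lieu non immerg\'e), $\Nu\cong\OP(\nb-1)$ est un fibr\'e en droites et le morphisme d'\'evaluation des sections aux points marqu\'es
\[
ev_{\underline{z}}:H^0(C,\Nu)\longrightarrow\bigoplus_{j=1}^{\nb}\Nu|_{z_j},\qquad s\longmapsto\big(s(z_1),\dots,s(z_{\nb})\big)
\]
a pour noyau $H^0(\CP^1,\Nu\otimes\OP(-\underline{z}))=0$~; c'est donc une injection entre deux espaces de dimension $\nb$, c'est-\`a-dire un isomorphisme. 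En le restreignant au sous-espace $H^0(C,\Nui)$ des sections s'annulant sur $\hat{\underline{z}}^i$, comme $\Nui\cong\OP$ est de dimension $h^0(\CP^1,\OP)=1$, on obtient un isomorphisme $H^0(C,\Nui)\xrightarrow{\sim}\Nu|_{z_i}$ sur le $i$-i\`eme facteur.

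Ensuite, j'\'etablirais le diagramme commutatif \`a lignes exactes
\[
\xymatrix{
0 \ar[r] & \bigoplus_{j=1}^{\nb} T_{z_j}C \ar[r] \ar@{=}[d] & \TMmc \ar[r]^-{\beta} \ar[d]^-{d\ev} & H^0(C,\Nu) \ar[r] \ar[d]^-{ev_{\underline{z}}} & 0 \\
0 \ar[r] & \bigoplus_{j=1}^{\nb} T_{z_j}C \ar[r]^-{du} & \bigoplus_{j=1}^{\nb} T_{u(z_j)}X \ar[r] & \bigoplus_{j=1}^{\nb} \Nu|_{z_j} \ar[r] & 0
}
\]
o\`u $\beta$ est le morphisme quotient de \eqref{decomp}, o\`u la fl\`eche centrale $d\ev$ est l'isomorphisme \eqref{decomp3} valable en tout point r\'egulier, et o\`u la ligne inf\'erieure est la somme directe des suites $0\to T_{z_j}C\xrightarrow{du} T_{u(z_j)}X\to\Nu|_{z_j}\to0$ (suite d\'efinissant $\Nu$ restreinte en $z_j$, exacte car $\Nu$ est localement libre). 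Le carr\'e de gauche exprime que d\'eplacer le seul point marqu\'e $z_j$ le long de $C$ d\'eplace $u(z_j)$ selon $du(T_{z_j}C)$ en fixant les $u(z_l)$ pour $l\neq j$~; il commute. L'obstacle principal est la commutation du carr\'e de droite, c'est-\`a-dire l'identit\'e
\[
\beta(v)(z_j)=\mathrm{pr}_j\big(d\ev(v)_j\big)\in\Nu|_{z_j}\qquad(v\in\TMmc,\ 1\le j\le\nb),
\]
o\`u $\mathrm{pr}_j:T_{u(z_j)}X\to\Nu|_{z_j}$ d\'esigne la projection normale~: la composante normale en $z_j$ du mouvement infinit\'esimal de $u(z_j)$ co\"incide avec la valeur en $z_j$ de la section $\beta(v)$ d\'ecrivant la d\'eformation normale de la courbe image. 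Cette compatibilit\'e est inh\'erente \`a la construction de l'identification $T_{(C,u)}\Mco\cong H^0(C,\Nu)$ et je la d\'eduirais d'une lecture locale des d\'eformations au premier ordre de $u$ (cf. Chap. II.1 de \cite{Ko}).

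Le diagramme acquis, j'en extrairais le $i$-i\`eme facteur. Via $d\ev$, le sous-espace $(\ev)^*T_{u(z_i)}X_{(i)}$ de $\TMmc$ s'identifie au $i$-i\`eme facteur $T_{u(z_i)}X$ de la ligne inf\'erieure~; par le carr\'e de droite, son image par $\beta$ est form\'ee des sections s'annulant en tout $z_j$ pour $j\neq i$, c'est-\`a-dire $H^0(C,\Nui)$ via l'isomorphisme $ev_{\underline{z}}$ ci-dessus. Le noyau de cette restriction de $\beta$ vaut $(\ev)^*T_{u(z_i)}X_{(i)}\cap\Ker\beta=(\ev)^*T_{u(z_i)}X_{(i)}\cap\bigoplus_j T_{z_j}C=T_{z_i}C$, puisque le carr\'e de gauche loge chaque $T_{z_j}C$ dans le $j$-i\`eme facteur direct. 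On obtient ainsi la suite \eqref{eqdecompfib}, l'exactitude (et en particulier la surjectivit\'e) se confirmant par le compte de dimensions sur $\C$~: $\dim T_{z_i}C=1$, $\dim(\ev)^*T_{u(z_i)}X_{(i)}=\dim T_{u(z_i)}X=2$ et $\dim H^0(C,\Nui)=1$.
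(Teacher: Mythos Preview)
Votre d\'emonstration est correcte, mais elle emprunte une voie diff\'erente de celle de l'article. L'article identifie $(\ev)^*T_{X_{(i)}}$ au noyau du morphisme compos\'e $\hat{e}_i=\evl\circ\ob_i:\Mc\to X^{\nb-1}$, puis en tire la suite cherch\'ee par chasse dans le diagramme form\'e des deux suites $0\to\Ker d\ob_i\to\TMc\to\TMcl\to0$ et $0\to\Ker d\evl\to\TMcl\to T_{X^{\nb-1}}\to0$~; il conclut en invoquant $\ker d\ob_i\cong T_{z_i}C$ (lemme \ref{propkero}) et $\ker d\evl\cong H^0(C,\Nui)$ (proposition \ref{kerev} appliqu\'ee \`a $\ob_i\mapg$). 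Votre approche construit au contraire un isomorphisme entre la suite \eqref{decomp} et la somme directe des suites ponctuelles $0\to T_{z_j}C\to T_{u(z_j)}X\to\Nu|_{z_j}\to0$, puis projette sur le $i$-i\`eme facteur. L'avantage de la m\'ethode de l'article est qu'elle reste enti\`erement au niveau des morphismes d'oubli et d'\'evaluation, sans d\'eplier l'identification $T_{(C,u)}\Mco\cong H^0(C,\Nu)$ que vous devez invoquer pour justifier la commutation de votre carr\'e de droite~; l'avantage de la v\^otre est qu'elle rend explicite l'isomorphisme $H^0(C,\Nui)\cong\Nu|_{z_i}$ et fait appara\^itre simultan\'ement les $\nb$ suites \eqref{eqdecompfib} comme facteurs directs de \eqref{decomp}, ce qui pr\'epare directement la construction des bases mod\`eles de la d\'efinition \ref{model}.
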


\begin{proof}
Pour $i\in\{1,\dots,\nb\}$, notons $\hat{e}_i$ le morphisme composé $$\hat{e}_i:\Mc \xrightarrow{\ob_i} \Mless \xrightarrow{\evl} X^{\nb-1}$$ de sorte que $(\ev)^*(T_{X_{(i)}})=\Ker d\hat{e}_i$. On déduit du diagramme de suites exactes
\begin{displaymath}
\xymatrix{& & & 0 \ar[d]\\
& & & \Ker d\evl \ar[d]\\
0\ar[r] &\Ker d\ob_{i} \ar[r] & \TMc \ar[r]^-{d\ob_i} \ar[rd]_-{d\hat{e}_{i}}& \TMcl \ar[d]^-{d\evl} \ar[r] & 0 \\
& & & T_{X^{\nb-1}} \ar[d]\\
& & & 0}
\end{displaymath}
la suite exacte $\xymatrix{0 \ar[r] & \Ker d\ob_i \ar[r] & \Ker d\hat{e}_{i} \ar[r]^-{d\ob_i|} & d\ob_i(\Ker d\hat{e}_{i}) \ar[r] & 0}$, ainsi que l'égalité $d\ob_i(\Ker d\hat{e}_{i})=\Ker d\evl$. On conclut en appliquant les égalités $\ker d\ob_i = T_{z_i}C$ et $\ker d\evl = H^0(C,\Nui)$ de la proposition \ref{kerev}.
\end{proof}

\begin{defi}\label{model}
Soit $\map \in \M^* \cap \Reg$, une base standard $\mathcal{B}=\basm$ de $\TMm$  est une \emph{base modèle} lorsqu'elle est compatible avec la décomposition \eqref{decomp3} et la suite exacte \eqref{eqdecompfib}. Autrement dit, si tout sous-espaces vectoriels de la décomposition \eqref{decomp3} admet comme générateurs un couple $(e_h,f_l)$ de la base $\mathcal{B}$. Si de plus, $e_i \in T_{z_i}C$ et $f_i \in H^0(C,\Nui)$ pour chaque $i \in \{1,\dots,\nb\}$ on dit que la base modèle est \emph{ordonnée}.
\end{defi}

\paragraph{Construction d'une base modèle positive}\label{basd}
On s'intéresse aux bases de l'espace vectoriel réel $\TRMc$ et à leurs orientations. On se place en un point régulier $\mapg \in \Reg \cap \RMc$ tel que $\Rev\map$ n'intersecte pas $\Omega_{\gamma}$ défini en \ref{choixchemin}. Fixons une orientation $\mathfrak{o}_{\gamma}$ sur $\RX \setminus \W$. On peut relever cette orientation sur $\TRMmc$ par la fonction d'évaluation et définir une orientation $\mathfrak{o}^*=(\Rev)^*(\mathfrak{o}\times\mathfrak{o}\times \dots \times \mathfrak{o})$. Une base de $\TRMmc$ est dite positive relativement à $\mathfrak{o}_{\gamma}$ si l'orientation qu'elle induit est $\mathfrak{o}^*$. On donne une procédure pour se munir d'une base modèle ordonnée positive relativement à $\mathfrak{o}_{\gamma}$ sur $\RX \setminus \W$. Une telle base induit une orientation directe sur $\RX^{\nb} \setminus \Omega_{\gamma}$ mais aussi sur chaque espace de la décomposition  $$T_{u(\underline{z})}\RX^{\nb}=\bigoplus^{\nb}_{i=1}T_{u(z_i)}\RX_{(i)}.$$

\begin{defi}
Soient $\map \in \Reg$ et $\basm$ une base modèle ordonnée de $\TRMm$. Pour un couple de vecteurs $(e_i,f_i)$ on définit son \emph{sens} relativement à $\mathfrak{o}_{\gamma}$ $$\angle(e_i,f_i) \in \{-1,+1\}$$ par la convention suivante (au point $\map$)~:
\begin{itemize}
\item$\angle(e_i,f_i)=+1$ si $d{\Rev}\left( (e_i,f_i)\right) $ est une base positive de $T_{u(z_i)}\RX_{(i)}$~;
\item$\angle(e_i,f_i)=-1$ si $d{\Rev}\left( (e_i,f_i)\right) $ est une base négative de $T_{u(z_i)}\RX_{(i)}$.
\end{itemize}
\end{defi}

On note $A_0=u_0(C_0)$ la courbe réelle définie par le point de départ du chemin $\gamma$. Fixons une orientation $\mathfrak{o}_{\R A_0}$ sur $\R A_0$. En combinant ces deux orientations on construit une base de $\TRMd$ comme suit. On choisit $k_d$ éléments $\{e_i^0 \in T_{z^0_i}C_0;\ i=1,\dots,k_d\}$ tels que les vecteurs $\{d_{z^0_i}u_0(e_i^0)\}_{i=1,\dots,k_d}$ soient dans $T\RA_0$ et positifs relativement à $\mathfrak{o}_{\R A_0}$ puis $\nb$ éléments $\{f_i^0 ; i=1,\dots,\nb\}$ dans $H^0(C_0,\Nud)$ de sorte que $\basd$ soit une base modèle ordonnée pour $\TRMd$ avec $\angle(e^0_i,f^0_i)=+1,\ \forall i \in \{1,\dots,\nb\}$. La base $\mathcal{B}_0=\basd$ ainsi définie est positive relativement à $\mathfrak{o}_{\gamma}$. Lorsque $\mapg \in \RMc^*$, le faisceau normal $\Nu$ est un faisceau réel pour la structure induite par $c_X$ et on note $H^0_{c_X}(C,\Nu)$ la partie réelle de $H^0(C,\Nu)$. Les éléments $\{f_i^0 ; i=1,\dots,\nb\}$ de  $\mathcal{B}_0$ sont dans $H^0_{c_X}(C,\Nud)$ et chaque $f_i$ appartient à $H^0_{c_X}(C,\Nuid)$ d'après la construction.

\subsubsection{Homotopies et trivialisation. Cas $d'\neq 0$}

\paragraph{Homotopie au point.}
L'objet de ce paragraphe est de construire une nouvelle base standard pour $\TRMd$ qui induise la même orientation que $\mathcal{B}_0$ mais qui va s'intégrer plus facilement dans une trivialisation de $T|_{\gamma}\RMc$ compatible avec le morphisme d'évaluation. On renvoie à la définition de $\tilde{\gamma}$ puis on choisit une base modèle ordonnée positive de $\TRMdtd$  relativement à $\mathfrak{o}_{\gamma}$ et $\mathfrak{o}_{\R A_0}$ suivant la méthode définie en \ref{basd} qu'on note $$\tilde{\mathcal{B}}_0=(\tilde{e}_1^0,\dots,\tilde{e}_{k_d}^0,\tilde{f}_1^0,\dots,\tilde{f}_{\nb}^0).$$
Chaque élément $\tilde{f}_i$ appartient à $H^0_{c_X}(\CP^1,\Nuidtd)$ pour $i \in \{1,\dots,\nb\}$ et le $\nb$-uplet $(\tilde{f}_1^0,\dots,\tilde{f}_{\nb}^0)$ est une base de $H^0_{c_X}(\CP^1,\Nud)$.

\begin{lemm}\label{turnlemma}
L'ensemble ordonné de vecteurs $\basdtd$ définit une base standard positive de $\TRMd$.
\end{lemm}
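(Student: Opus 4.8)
First I would check that $\basdtd$ is a standard basis, which is formal, and then obtain positivity by rewriting the relevant sign as a parity count along an auxiliary path.

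Since $\gamma$ and $\tilde\gamma$ were built so that their sources and their common marked points coincide (see~\ref{chemintd}), the stable maps $\mapd=\gamma(0)$ and $\mapdtd=\tilde\gamma(0)$ share the source $C_0\cong\CP^1$, the morphism $u_0$ and the marked points of index larger than $\ell$, differing only in $z_1^0,\dots,z_\ell^0$ versus $\tilde z_1^0,\dots,\tilde z_\ell^0$. In particular $\ob$ sends both of them to $(\CP^1,u_0)$, so the exact sequence~\eqref{decomp} of Proposition~\ref{decomptan}, read over $\R$, has quotient $H^0_{c_X}(\CP^1,\Nud)$ both at $\gamma(0)$ and at $\tilde\gamma(0)$, with kernels $T_{\underline{z}^0}C_0^{\nb}=\Ker d|_{\gamma(0)}\ob$ and $T_{\underline{\tilde{z}}^0}C_0^{\nb}=\Ker d|_{\tilde\gamma(0)}\ob$ (Lemma~\ref{propkero}). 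Now $e_1^0,\dots,e_{\nb}^0$ span $\Ker d|_{\gamma(0)}\ob$ because $\mathcal{B}_0=\basd$ is a standard basis, and $(\tilde f_1^0,\dots,\tilde f_{\nb}^0)$ is a basis of $H^0_{c_X}(\CP^1,\Nud)$ because $\tilde{\mathcal{B}}_0$ is; hence $\basdtd$ is adapted to~\eqref{decomp} at $\gamma(0)$, i.e.\ is a standard basis of $\TRMd$. For positivity it then suffices to compare $\basdtd$ with the positive basis $\mathcal{B}_0$: since their $e$-parts coincide, this amounts to showing that the change of basis $(f_i^0)\mapsto(\tilde f_i^0)$ of $H^0_{c_X}(\CP^1,\Nud)$ has positive determinant.

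I would read this sign as a parity count. Join $\gamma(0)$ to $\tilde\gamma(0)$ by a path $\sigma$ in $\RMc^*$ along which $u_0$ and the marked points of index larger than $\ell$ stay fixed and only $z_1,\dots,z_\ell$ move, each from $\tilde z_j^0$ to $z_j^0$, all routed through one common ``corridor'' in $C_0$ joining the two clusters, kept pairwise distinct and with their cyclic order unchanged. Along $\sigma$ the marked points remain distinct on $\CP^1$, so $\Nud\otimes\OP(-\underline{z})\cong\OP(-1)$ has vanishing $H^1$ and Proposition~\ref{kerev} gives $\sigma\subset\Reg$. Trivialize $\TRMc$ along $\sigma$ compatibly with~\eqref{decomp}: on the quotient $H^0_{c_X}(\CP^1,\Nud)$, which is constant along $\sigma$ since $u_0$ is frozen, take the constant trivialization; on each kernel summand $T_{z_i}C_0$ take a trivialization positive for $\mathfrak{o}_{\R A_0}$. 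Then the transport of $\tilde{\mathcal{B}}_0$ from $\tilde\gamma(0)$ to $\gamma(0)$ along $\sigma$ fixes each $\tilde f_i^0$ and carries each $\tilde e_i^0$ to a positive generator of $T_{z_i^0}C_0$, so it induces the same orientation on $\TRMd$ as $\basdtd$. On the other hand $\mathfrak{o}^*$ is the pull-back under $\Rev$ of a fixed orientation of $\RX^{\nb}\setminus\Omega_{\gamma}$, and transport of orientations along a path does not depend on the trivialization, so the transport of $\tilde{\mathcal{B}}_0$ is positive at $\gamma(0)$ up to the sign $(-1)^{N_\sigma}$, where $N_\sigma$ is the number of transverse crossings of $\Rev(\sigma)$ with $\Omega_{\gamma}$, that is $\sum_{j\le\ell}$ of the number of crossings of $t\mapsto u_0(z_j(t))$ with $\W$.

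It remains to check that $N_\sigma$ is even. Inside the two clusters the points $z_j$ have image in $\overline{D}_{\star}$, which is disjoint from $\W$ by the choice made in~\ref{choixchemin}, so no crossing occurs there; every crossing happens in the common corridor, and since the $\ell$ moving points traverse it in parallel each of them crosses $\W$ the same number $N$ of times. Hence $N_\sigma=\ell N$, which is even because $\ell$ is even by Definition~\ref{ell}. Therefore the transport of $\tilde{\mathcal{B}}_0$, and with it $\basdtd$, is positive.

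The delicate point is the construction of $\sigma$. The clusters $\{z_j^0\}_{j\le\ell}$ and $\{\tilde z_j^0\}_{j\le\ell}$ lie near the two preimages in $C_0$ of the image node $q_0\in D_{\star}$, on distinct local branches of $\R A_0$ through $q_0$, so they need not lie in the same connected component of $\R C_0\setminus u_0^{-1}(\W)$ and one cannot in general keep $\sigma$ inside $O_{\gamma}$. One does not have to: the argument above only uses that $N_\sigma$ is a multiple of $\ell$, which holds for any corridor as soon as the $\ell$ points use it simultaneously. The real work is to make such a corridor clean --- the $\ell$ paths pairwise disjoint, each meeting $\W$ equally often, with no spurious crossing inside $D_{\star}$ and no forced transposition of two marked points --- and this is where one uses that $\underline{z}^0$ and $\underline{\tilde{z}}^0$ have the clustered, order-preserving shape with $\frac{\ell}{2}$ points on each side of the node prescribed by Lemmas~\ref{mapr} and~\ref{maprtilde}.
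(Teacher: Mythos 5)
Your overall strategy is the same as the paper's: reduce to comparing $(f_1^0,\dots,f_{\nb}^0)$ and $(\tilde f_1^0,\dots,\tilde f_{\nb}^0)$ as bases of $H^0_{c_X}(\CP^1,\Nud)$, realize the comparison by a path in the space of marked-point configurations that moves only $z_1,\dots,z_\ell$ onto $\tilde z_1,\dots,\tilde z_\ell$ while freezing $u_0$ and the remaining points, and read the sign off the crossings of the evaluated path with $\Omega_\gamma$. The formal part (that $\basdtd$ is a standard basis, and that the sign of the transport is $(-1)^{N_\sigma}$) is fine. The gap is in your claim that $N_\sigma$ is automatically a multiple of $\ell$ because "the $\ell$ moving points traverse one common corridor in parallel". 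They cannot. For $\tau=id$ all marked points live on $\R C_0\cong\RP^1$, and a moving point can never cross a fixed one. By the cyclic order forced by Lemmas \ref{mapr} and \ref{maprtilde} (and recorded at the start of the paper's proof), the moving points split into two groups of $\frac{\ell}{2}$: the block $z_1^0,\dots,z_{\ell/2}^0,\tilde z_1^0,\dots,\tilde z_{\ell/2}^0$ sits in one arc of $\RP^1$ cut out by the fixed points, and $\tilde z_{\ell/2+1}^0,\dots,\tilde z_\ell^0, z_{\ell/2+1}^0,\dots,z_\ell^0$ in another, the two arcs being separated by the fixed points $z_{k'+1}^0,\dots,z_{\nb}^0$ and $z_{\ell+1}^0,\dots,z_{k'}^0$. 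So the honest count is $N_\sigma=\frac{\ell}{2}(N_1+N_2)$ for two independent corridors, which is not automatically even: for $\ell\equiv 2\ (4)$ and $N_1+N_2$ odd you would conclude that $\basdtd$ is \emph{negative}. Worse, $(-1)^{\ell/2}$ is exactly the sign the rest of the proof (Lemma \ref{matrat} and the final corollary) is designed to extract, so an uncontrolled extra factor of this form at this stage is fatal, not harmless.

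The point you defer as "the real work" is in fact the content of the lemma, and it is resolved not by parity but by showing $N_\sigma=0$. This is precisely why Lemmas \ref{mapr} and \ref{maprtilde} split each cluster into two halves placed on the four local real branches at the node ($U_1^{\pm}$, $U_2^{\pm}$): after smoothing the source, the arc of $\R C_0$ from $z_{\ell/2}^0$ to $\tilde z_1^0$ and the arc from $\tilde z_\ell^0$ to $z_{\ell/2+1}^0$ are the smoothings of arcs through $\xi^{\star}$ whose images lie in $D_{\star}$, hence (for $B_{\star}$ small) each half-cluster of $z$'s is adjacent to the corresponding half-cluster of $\tilde z$'s \emph{inside} $u_0^{-1}(D_{\star})$. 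Since $\W$ was chosen in \ref{choixchemin} disjoint from $\overline{D}_{\star}$, both corridors can be taken with evaluation image in $D_{\star}$, so no crossing with $\Omega_\gamma$ occurs at all; your stated worry that the two clusters might lie in different components of $\R C_0\setminus u_0^{-1}(\W)$ is exactly what this construction rules out. This is how the paper argues: it builds the path $c$ in $(\RP^1)^{\nb}\setminus Diag_{\nb}$ with $\mathbb{R}ev_{\nb}^d(c)\subset D_{\star}$, homotopes the sections so that their zero divisors follow $c$, and concludes that each sense $\angle(e_i^0,f_i^0)=+1$ is preserved, giving $h_1^0(f_i^0)=\lambda_+^i\tilde f_i^0$ with $\lambda_+^i>0$. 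You should replace the multiple-of-$\ell$ argument by this explicit verification that the two corridors stay in $D_{\star}$.
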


\begin{proof}\label{turn}
On se place au point $\Gamma(0)=(\CP^1,z^0\cup\tilde{z}^0,u_0) \in \RMl^*$ défini au paragraphe \ref{chemintd}. Quitte à \og inverser\fg\ $\gamma$ (et donc $\Gamma$), on peut supposer d'après les lemmes \ref{mapr} et \ref{maprtilde} que l'ordre cyclique des points $u_0(z^0\cup\tilde{z}^0)$ sur $\RA_0$ est donné par
\begin{eqnarray*}
\begin{aligned}u_0(z^0_1)< \dots & <u_0(z^0_{\frac{\ell}{2}})<u_0(\tilde{z}^0_1)<\cdots< u_0(\tilde{z}^0_{\frac{\ell}{2}})<u_0(z^0_{k'+1})<\cdots \\ & \cdots <u_0(z^0_{\nb})<u_0(\tilde{z}^0_{\frac{\ell}{2}+1})<\dots<u_0(\tilde{z}^0_{{\ell}})<u_0(z^0_{\ell+1})<\dots<u_0(z^0_{k'})
\end{aligned}\end{eqnarray*}On convient que lorsque $\ell=k'$, la partie à droite de $u_0(\tilde{z}^0_{\frac{\ell}{2}})$ n'est pas retenue. (Voir figure \ref{config0}.)
\begin{figure}[htp]
\begin{center}
\input{config0.pstex_t}
\end{center}
\caption{$X=\CP^2$, $d=3.[L]$, $\nb=8$, $\ell=2$}\label{config0}
\end{figure}\\
Fixons des représentants $(\underline{z}^0,u_0)$, $(\underline{\tilde{z}}^0,u_0)$ dans $ (\RP^1)^{\nb} × \RMor_d(X,c_X)$ pour les points $\mapd$ et $\mapdtd$ de sorte que $\underline{z}^0_i=\underline{\tilde{z}}^0_i$ lorsque $i \in \{\ell+1,\dots,\nb\}$. D'après l'ordre cyclique, on peut construire un chemin dans $(\RP^1)^{\nb} \setminus Diag_{\nb}$ qui lie $\underline{z}^0$ à $\underline{\tilde{z}}^0$ et qui reste constant en $z^0_{\ell+1}=\tilde{z}^0_{\ell+1},\dots,z^0_{\nb}=\tilde{z}^0_{\nb}$. On pose $c:[0,1] \to (\RP^1)^{\nb} \setminus Diag_{\nb}$ tel que $c(0)=\underline{z}^0$ et $c(1)=\underline{\tilde{z}}^0$ un tel chemin. (Comme précédemment, on note parfois $c$ pour désigner $c([0,1])$ sans ambiguïté possible.) Puisque $u_0$ est une immersion, on rappelle que $\Nud=\OP(\nb-1)$ (propriété \ref{degnormal}), de plus $f^0_i \in H^0_{c_X}(\CP^1,\OP(-\widehat{z^0_i}))$ car $\mathcal{B}_0$ est une base modèle. On construit ainsi une homotopie de base $h_{t \in [0,1]}^0$ dans l'espace vectoriel $H^0_{c_X}(\CP^1,\OP(\nb-1))$ de sorte que pour $i \in \{1,\dots,\nb\}$, $h_0^{0}(f^0_i)=f^0_i$ et $h^0_1(f^0_i) \in H^0_{c_X}(\CP^1,\OP(-\widehat{\tilde{z}^0_i}))$ suivant l'ordre cyclique des zéros de sections. D'après le choix de la base $(\tilde{e}_1^0,\dots,\tilde{e}_{k_d}^0,\tilde{f}_1^0,\dots,\tilde{f}_{\nb}^0)$, les vecteurs $\tilde{e}_i$ de $T_{\tilde{z}_i}\RP^1$ sont tous orientés positivement relativement à $\mathfrak{o}_{\R A_0}$. Puisque $\Rev(c) \subset D_{\star}$ on a $\Rev(c) \cap \Omega_{\gamma} = \emptyset$ et on obtient une famille de sections $(\tilde{f}_1^0,\dots,\tilde{f}_{k_d}^0)$ qui est ordonnée avec $(\tilde{e}_1^0,\dots,\tilde{e}_{k_d}^0)$  dans $T_{c(1)}\RM$ car $h^0_1(f^0_i) \in H^0_{c_X}(\CP^1,\Nuidtd)$ et positive car $\angle(\tilde{e}_i^0,h^0_1(f^0_i))= \angle(e_i^0,f^0_i)= +1$, pour $i \in \{1,\dots,\nb\}$. On en déduit l'existence de réels $\lambda^i_+ > 0$ tels  que $h^0_1(f^0_i)=\lambda^i_+\tilde{f}^0_i$ pour $i \in \{1,\dots,\nb\}$. En conclusion, $(f^0_1,\dots,f_{\nb}^0)$ et $(\tilde{f}_1^0,\dots,\tilde{f}_{\nb}^0)$ sont homotopes comme bases de $H^0_{c_X}(\CP^1,\Nud)$ et le résultat s'en déduit par positivité de la base $\mathcal{B}_0$.
\end{proof}

\paragraph{Trivialisation le long du chemin.}\label{trivial}

On décrit une trivialisation du fibré tangent $\TMc$ le long de $\tilde{\gamma}$. À partir de celle-ci on va construire  une trivialisation le long de ${\gamma}$. Le chemin $\tilde{\gamma}$ est inclus dans le lieu régulier $\Reg$ du morphisme d'évaluation. En particulier le morphisme d'évaluation réel est un difféomorphisme et on peut relever la décomposition \eqref{decomp3} le long de $\tilde{\gamma}$
$$T|_{\tilde{\gamma}}\RMc=\bigoplus_{i=1}^{\nb}(\Rev)^{*}T|_{\tilde{\gamma}}\RX_{(i)}.$$
On pose $\tilde{\Phi}$ la trivialisation de $T|_{\tilde{\gamma}}\RMc$ issue de $\mathcal{B}_0$ et compatible avec cette décomposition et la suite exacte ${0 \rightarrow \Ker d\ob_i \rightarrow (\ev)^*T_{X_{(i)}} \stackrel{d\ob_i|}{\rightarrow} d\ob_i(\Ker d\hat{e}_{i}) \rightarrow 0}$ (voir proposition \ref{decompfib}) dont les faisceaux adjacents sont les faisceaux de fibrés en droite restreints à $\tilde{\gamma}$. On note $\tilde{e}_i:\tilde{\gamma} \to \Ker d\ob_i$ les $\nb$ sections tautologiques réelles de chaque sous-fibré $\Ker d\ob_i|_{\tilde{\gamma}}$ de sorte que  $\tilde{e}_i(\tilde{\gamma}(0))=\tilde{e}_i^0$ et $\tilde{f}_i:\tilde{\gamma} \to \Ker d\hat{e}_i/\Ker d\ob_i$ les $\nb$ sections tautologiques réelles telles que  $\tilde{f}_i(\tilde{\gamma}(0))=\tilde{f}_i^0$ pour $i \in \{1,\dots,\nb\}$. C'est-à-dire, en notant $\tilde{\mathcal{B}}_t=\basttd$ pour $t \in [0,1]$ on définit
\begin{align*}
\tilde{e}_i^t=\tilde{e}_i(\tilde{\gamma}(t))=\tilde{\Phi}(t;0,\dots,1,\dots,0,0,\dots,0)\\
\tilde{f}_i^t=\tilde{f}_i(\tilde{\gamma}(t))=\tilde{\Phi}(t;0,\dots,0,0,\dots,1,\dots,0)
\end{align*}
la section de bases de $\TRMc$ issues de la trivialisation $\tilde{\Phi}$ le long de $\tilde{\gamma}$. \`A présent, on définit une section de bases de $\TRMc$ le long du chemin $\gamma$. On considère pour cela une trivialisation $\phi$\label{phi} de $\Ker d\ob$ le long de $\gamma$, issue de $\mathcal{B}_0|_{\ker d\ob}$ et compatible avec la décomposition en droites du lemme \ref{propkero}. On note $e_i:\gamma \to \Ker d\ob_i$ les $\nb$ sections tautologiques de chaque sous-fibré $\Ker d\ob_i|_{\gamma}$ issues de $e^0_i \in \ker d_{\gamma(0)}\ob_i$. On considère ensuite $\tilde{f}_i:\tilde{\gamma} \to \TRMc/\Ker d\ob$ les $\nb$ sections précédemment définies de $T_{\RMc}/\Ker d\ob|_{\tilde{\gamma}} \cong \bigoplus_{i=1}^{\nb} \Ker d\hat{e}_i/\Ker d\ob_i|_{\tilde{\gamma}}$ afin d'obtenir une famille libre $\overline{\mathcal{B}}_t=\bast_{t \in [0,1]}$ de sections du fibré $T|_{\gamma}\RMc$ par le lemme suivant.

\begin{lemm}
Il existe un isomorphisme de fibrés
$$\TRMc/\Ker d\ob|_{\tilde{\gamma}} \cong \TRMc/\Ker d\ob|_{\gamma}$$
qui fait de $\{\tilde{f}^t_i;i=1,\dots,\nb\}_{t \in [0,1]}$ des sections de $\TRMc/\Ker d\ob|_{\gamma}$.
\end{lemm}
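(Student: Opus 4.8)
Le point clef est que le fibré quotient $\TRMc/\Ker d\ob$ ne dépend que de la courbe sous-jacente et que les chemins $\gamma$ et $\tilde\gamma$ ont, par construction, la même image par le morphisme d'oubli total.

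On commencera par observer que la suite exacte \eqref{decomp} de la proposition \ref{decomptan} n'est autre que
\[
0 \longrightarrow \Ker d\ob \longrightarrow \TRMc \xrightarrow{\ d\ob\ } \ob^{*}\TRMco \longrightarrow 0,
\]
où $\ob : \RMc^{*} \to \RMco^{*}$ désigne le morphisme d'oubli total. En effet, le noyau $T_{\underline{z}}C^{\nb}$ s'identifie à $\Ker d\ob$ d'après le lemme \ref{propkero} et le quotient $H^{0}(C,\Nu)$ à l'espace tangent $T_{(C,u)}\Mco$. On en déduit, au-dessus du lieu des applications stables simples dont les points marqués sont réguliers sur la source stabilisée --- lieu qui contient $\gamma$ et $\tilde\gamma$ ---, un isomorphisme canonique de fibrés vectoriels réels $\TRMc/\Ker d\ob \cong \ob^{*}\TRMco$, compatible avec les structures réelles puisque $\ob$ et $d\ob$ le sont.

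On rappellera ensuite la construction du paragraphe \ref{chemintd} : $\gamma$ et $\tilde\gamma$ sont les images d'un même chemin $\Gamma \subset \RMcl^{*} \cup \mathcal{K}_{\nb+\ell}^{d}$ par deux morphismes d'oubli de $\ell$ points. Puisque la composée d'un morphisme d'oubli de $\ell$ points avec le morphisme d'oubli total $\ob^{\nb}$ est le morphisme d'oubli total $\ob^{\nb+\ell}$ de $\RMcl^{*}$ (cf. \ref{oubli}), on obtient l'égalité de chemins dans $\RMco^{*}$
\[
\ob \circ \gamma = \ob^{\nb+\ell} \circ \Gamma = \ob \circ \tilde\gamma =: \beta,
\]
qui traduit exactement le fait que les courbes coïncident le long de $\gamma$ et de $\tilde\gamma$. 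En combinant ces deux points on conclura par la chaîne d'égalités de fibrés vectoriels réels au-dessus de $[0,1]$
\[
\TRMc/\Ker d\ob|_{\tilde\gamma} = \tilde\gamma^{*}\ob^{*}\TRMco = \beta^{*}\TRMco = \gamma^{*}\ob^{*}\TRMco = \TRMc/\Ker d\ob|_{\gamma},
\]
qui fournit l'isomorphisme de fibrés recherché. Pour chaque $t \in [0,1]$ il envoie la classe de $\tilde{f}^{t}_{i}$ modulo $\Ker d\ob$ sur sa classe dans $(\TRMc/\Ker d\ob)_{\gamma(t)}$ ; comme $\tilde{f}^{t}_{i}$ est, par construction, une section de $\TRMc/\Ker d\ob|_{\tilde\gamma} \cong \bigoplus_{i=1}^{\nb} \Ker d\hat{e}_{i}/\Ker d\ob_{i}|_{\tilde\gamma}$ (proposition \ref{decompfib}), son transport définit la famille $\{\tilde{f}^{t}_{i}; i=1,\dots,\nb\}_{t \in [0,1]}$ de sections de $\TRMc/\Ker d\ob|_{\gamma}$.

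Le seul point qui demandera un peu de soin --- et c'est là qu'il faut être prudent --- est de vérifier que l'identification $\TRMc/\Ker d\ob \cong \ob^{*}\TRMco$ reste valable au point d'intersection de $\gamma$ avec $\RKd$. En ce point la source devient réductible, $C = C^{1} \cup C^{2}$ ; mais l'hypothèse $k' > k_{d'}+1$ faite sur les diviseurs écrasés entraîne $k'' \leq k_{d''}-1$, ce qui interdit $d''=0$ : aucune des deux branches n'est donc contractée par $\ob$, les points marqués restent réguliers sur $C_{\ob}^{stab} = C$, et la proposition \ref{decomptan} ainsi que le lemme \ref{propkero} s'y appliquent encore. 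Le fibré $\ob^{*}\TRMco$ demeure alors localement libre le long de $\gamma$ tout entier, l'espace $\RMco^{*}$ étant lisse, et l'isomorphisme ci-dessus s'étend à travers ce point.
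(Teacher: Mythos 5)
Votre preuve est correcte et suit essentiellement la même démarche que celle du texte : on identifie $\TRMc/\Ker d\ob$ au tiré en arrière de $\TRMco$ via la suite exacte du morphisme d'oubli, puis on utilise l'égalité $\ob(\gamma)=\ob(\tilde{\gamma})$ issue de la construction de $\Gamma$. Votre vérification supplémentaire au point nodal (aucune branche n'est contractée puisque $d'\neq 0$ et $k'>k_{d'}+1$ force $d''\neq 0$) est un complément bienvenu mais ne change pas l'argument.
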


\begin{proof}
On a la suite exacte de fibrés $0 \rightarrow \Ker d\ob\rightarrow \TRMc \xrightarrow{d\ob} \TRMco \rightarrow 0$ d'où on tire les isomorphismes de fibrés $\TRMc/\Ker d\ob|_{\tilde{\gamma}} \cong T|_{\ob(\tilde{\gamma})}\RMco$ d'une part et $\TRMc/\Ker d\ob|_{\gamma} \cong T|_{\ob(\gamma)}\RMco$ d'autre part. Or les images des chemins $\ob(\tilde{\gamma})$ et $\ob(\gamma)$ dans $\RMco$ sont égales.
\begin{displaymath}
\xymatrix{[0,1] \ar[d]_-{\gamma} \ar[r]^-{\tilde{\gamma}} & \RMc|{\tilde{\gamma}} \ar[d]^-{\ob} \ar@/^/[r]^-{\tilde{f}_i} & \TRMc/\Ker d\ob|_{\tilde{\gamma}} \ar[l] \ar[dd]^-{\wr} \\
\RMc \ar[r]_-{\ob} & \RMco|_{\ob(\gamma)=\ob(\tilde{\gamma})} \\
\TRMc/\Ker d\ob|_{\gamma} \ar[u] \ar[rr]_-{\sim} & & T|_{\gamma}\RMco \ar[lu]}
\end{displaymath}
On obtient ainsi une famille de sections du fibré $\TRMc/\Ker d\ob|_{\gamma}$, notées abusivement $\{\tilde{f}^t_i;i=1,\dots,\nb\}_{t \in [0,1]}$. Chaque élément $\tilde{f}^t_i$, $i \in \{1,\dots,\nb\}$ est en définitive une section de $\Ker d\hat{e}_i/\Ker d\ob_i|_{\gamma}$.
\end{proof}

\begin{rema}
Les sections $e_i$ et $\tilde{e}_i$ sont identiques pour $i>\ell$, suivant l'identification $\ker d_{\gamma(t)}\ob_i=T_{z_i^t}\C_t=T_{\tilde{z}_i^t}\C_t=\ker d_{\tilde{\gamma}(t)}\ob_i$, quelque soit $t \in [0,1]$.
\end{rema}

\begin{rema}
Le chemin $\overline{\mathcal{B}} : [0,1] \rightarrow \prod^{2\nb}\TRMc$ définit bien une section de bases standards de $\TRMc$ le long de $\gamma$ puisque la famille $(e^t_1,\dots,e^t_{\nb})_{t \in [0,1]}$ (resp. $(\tilde{f}_1,\dots,\tilde{f}_{\nb})_{t \in [0,1]}$) est libre dans le sous-fibrés $\Ker d\ob|_{\gamma}$ (resp. dans le quotient $\TRMc/\Ker d\ob|_{\gamma}$).
\end{rema}

\paragraph{Retour au modèle.}\label{basa}
On décrit une homotopie de la base standard $\mathcal{\overline{B}}_1$ de $\TRMa$ sur une base modèle afin de l'évaluer par le morphisme d'évaluation réel. On utilise les techniques employées dans la démonstration du lemme \ref{turnlemma} au paragraphe \ref{turn}.

\begin{lemm}\label{c1}
Il existe une base modèle de $\TRMa$ notée $$\mathcal{B}_1=\basa$$ ayant les propriétés suivantes~:
\begin{enumerate}
\item $(\tilde{f}^1_1,\dots,\tilde{f}_{\nb}^1)$ et $(f^1_ 1,\dots,f^1_{\nb})$ sont homotopes dans $H_{c_X}^0(\CP^1,\Nua)$ ;
\item pour $i \in \{1,\dots,\ell\}$~: $f_i^1 \in H_{c_X}^0(\CP^1,\mathcal{N}_{u,-\ob_{\ell+1-i}(z)})$ ;
\item pour $i \in \{\ell+1,\dots,\nb\}$~: $f_i^1 \in H_{c_X}^0(\CP^1,\Nui)$.
\end{enumerate}
\end{lemm}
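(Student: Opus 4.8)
The plan is to run, at the endpoint $\gamma(1)=\mapa$ of the path, the exact analogue of the argument that proves Lemme~\ref{turnlemma} in paragraphe~\ref{turn}. Recall first that $\overline{\mathcal B}_1=\basatd$ is already a standard basis of $\TRMa$: the vectors $e_i^1=e_i(\gamma(1))$ generate $\ker d_{\gamma(1)}\ob_i=T_{z_i^1}\CP^1$, and since $\tilde\gamma$ lies in $\Reg$ while the trivialisation $\tilde\Phi$ along $\tilde\gamma$ was built compatibly with the exact sequence \eqref{eqdecompfib}, each $\tilde f_i^1$ belongs to $H^0_{c_X}(\CP^1,\Nuitda)$ and $(\tilde f_1^1,\dots,\tilde f_\nb^1)$ is a basis of $H^0_{c_X}(\CP^1,\Nua)$; moreover $\tilde z_i^1=z_i^1$ for $i>\ell$, and since $u_1$ is an immersion, $\Nua\cong\OP(\nb-1)$ (cf. \ref{degnormal}). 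Write $\R A_1$ for the real rational curve $u_1(C_1)$ parametrised at $\gamma(1)$.

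First I would read off the cyclic order of $u_1(\underline z^1\cup\underline{\tilde z}^1)$ on $\R A_1$. Outside a contractible disc $D_\star$ around $u_1(\xi^\star)$ this order is inherited from Lemmes~\ref{mapr} and~\ref{maprtilde}, and there the marked points of index $i>\ell$ are common to $\underline z^1$ and $\underline{\tilde z}^1$. Inside $D_\star$, the point is that $\gamma(1)$ lies on the side of the wall $\RKd$ opposite to $\gamma(0)$ — the path $\gamma$ meeting $\RKd$ transversally at the single point $\mapr$ — hence it realises the other real smoothing of the node $u_\star(\xi^\star)$; passing from one smoothing to the other reconnects the two arcs of the neck the opposite way, which reverses within $D_\star$ the cyclic positions of the first $\ell$ marked points. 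Consequently, after the reindexing $i\mapsto\ell+1-i$ on $\{1,\dots,\ell\}$ the configurations $\underline{\tilde z}^1$ and $\underline z^1$ are in the same cyclic order on $\R A_1$; and since all displacements take place inside $D_\star$, which is disjoint from $\W$ by the choice of $\W$, while $\Rev\mapa\notin\Omega_\gamma$, there is a path $c^1\colon[0,1]\to(\RP^1)^{\nb}\setminus Diag_{\nb}$ — constant in the coordinates $i>\ell$, monotone for the cyclic order on $\R A_1$, and with $\Rev(c^1)\cap\Omega_\gamma=\emptyset$ — joining $\underline{\tilde z}^1$ to $(z_\ell^1,z_{\ell-1}^1,\dots,z_1^1,z_{\ell+1}^1,\dots,z_\nb^1)$.

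Next, exactly as the homotopy $h^0$ of the proof of Lemme~\ref{turnlemma} was built, I would lift $c^1$ to a homotopy $h^1_t$ of bases of $H^0_{c_X}(\CP^1,\OP(\nb-1))$ with $h^1_0(\tilde f_i^1)=\tilde f_i^1$: a real section of $\OP(\nb-1)$ is determined up to a positive real scalar by its $\nb-1$ pairwise distinct real zeros, so following $c^1$ — along which the $\nb$ distinguished points stay pairwise distinct — keeps the $\nb$ sections $h^1_t(\tilde f_i^1)$ linearly independent at every time. Set $f_i^1:=h^1_1(\tilde f_i^1)$. By construction the zeros of $f_i^1$ are the $z_j^1$ with $j\neq\ell+1-i$ when $i\leq\ell$, and the $z_j^1$ with $j\neq i$ when $i>\ell$; hence $f_i^1\in H^0_{c_X}(\CP^1,\mathcal N_{u_1,-\ob_{\ell+1-i}(z^1)})$ for $i\leq\ell$ and $f_i^1\in H^0_{c_X}(\CP^1,\Nuia)$ for $i>\ell$, which are the assertions $2$ and $3$; assertion $1$ is the homotopy $h^1$ itself, the family $(h^1_t(\tilde f_1^1),\dots,h^1_t(\tilde f_\nb^1))_{t\in[0,1]}$ remaining a basis of $H^0_{c_X}(\CP^1,\Nua)$.

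Finally one checks that $\mathcal B_1=\basa$ is a model basis of $\TRMa$ in the sense of \ref{model}, although not an ordered one: through the exact sequence \eqref{eqdecompfib}, the $i$-th summand $(\Rev)^{*}T_{u_1(z_i^1)}\RX_{(i)}$ of the decomposition \eqref{decomp3} is generated by the couple $(e_i^1,f_{\ell+1-i}^1)$ when $i\leq\ell$ and by $(e_i^1,f_i^1)$ when $i>\ell$. The delicate step, which is really the whole content of the lemma, is the bookkeeping of the cyclic order at $\gamma(1)$: one must verify that a single transverse crossing of $\RKd$ between $\gamma(0)$ and $\gamma(1)$ reverses exactly the first $\ell$ marked points inside $D_\star$ (whence the reindexing $i\mapsto\ell+1-i$), everything outside $D_\star$ being left unchanged. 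Granting this, the construction of $c^1$ and of its lift $h^1$ is a verbatim transcription of the homotopy in the proof of Lemme~\ref{turnlemma}, and the conclusion follows.
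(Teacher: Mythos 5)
Votre preuve suit essentiellement la même démarche que celle du texte : lecture de l'ordre cyclique de $u_1(\underline{z}^1\cup\underline{\tilde{z}}^1)$ sur $\R A_1$ au point d'arrivée (lemmes \ref{mapr} et \ref{maprtilde}), identification $\Nua\cong\OP(\nb-1)$ via l'immersion, puis homotopie de bases $h^1_t$ dans $H^0_{c_X}(\CP^1,\OP(\nb-1))$ déplaçant les zéros des sections suivant cet ordre cyclique, exactement comme au paragraphe \ref{turn}, avec $f^1_i:=h^1_1(\tilde f^1_i)$. Votre justification supplémentaire du renversement $i\mapsto\ell+1-i$ par les deux lissages réels du n{\oe}ud, ainsi que la vérification finale que $\mathcal{B}_1$ est modèle mais non ordonnée, sont conformes à ce que le texte affirme.
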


\begin {proof}
Quitte à restreindre $\gamma$ et d'après le lemme \ref{maprtilde}, l'ordre cyclique des points marqués à l'image $u_1(\underline{z} \cup \underline{\tilde{z}})$ sur $\RA_1$ où $A_1=u_1(\CP^1)$ est le suivant
\begin{eqnarray*}
\begin{aligned}u_1(z_1)&<\dots<u_1(z_{\frac{\ell}{2}})<u_1(\tilde{z}_{\ell})<\cdots<u_1(\tilde{z}_{\frac{\ell}{2}+1})<u_1(z_{\nb})<\cdots \\ \cdots& <u_1(z_{k'+1})<\dots <u_1(\tilde{z}_{\frac{\ell}{2}})<\dots<u_1(\tilde{z}_1)<u_1(z_{\frac{\ell}{2}+1})<\dots<u_1(z_{k'})\end{aligned}\end{eqnarray*}
\begin{figure}[htp]
\begin{center}
\input{config1.pstex_t}
\end{center}
\caption{$X=\CP^2$, $d=3.[L]$, $\nb=8$, $\ell=2$}\label{config1}
\end{figure}\\
On choisit des représentants $(\underline{z}^1,u_1)$ et $(\underline{\tilde{z}^1},u_1)$ dans $(\RP^1)^{\nb} × \RMor_d(X,c_X)$ de $\mapa$ et $\mapatd$ de sorte que $z^1_i=\tilde{z}^1_i$ lorsque $i \in \{\ell+1,\dots,\nb\}$. On rappelle que, $u_1$ étant une immersion, $\Nua=\OP(\nb-1)$ et de plus $\tilde{f}^1_i \in H^0_{c_X}(\CP^1,\OP(-\widehat{z^1_i}))$. On peut donc définir comme au paragraphe \ref{turn} une homotopie de base, notée $h^1_{t\in[0,1]}$, dans $H^0_{c_X}(\CP^1,\OP(\nb-1))$ de sorte que $h^1_0=id$, $h^1_1(\tilde{f}^1_i) \in H^0_{c_X}(\CP^1,\OP(\widehat{z^1_{\ell+1-i}}))$ lorsque $i \in \{1,\dots,\ell\}$ et $h^1_1(\tilde{f}^1_i) \in H^0(\CP^1,\OP(\widehat{z^1_{i}}))$  $i \in \{\ell+1,\dots,\nb\}$ d'après l'ordre cyclique sur les zéros de sections. En posant $f^1_i=h^1_1(\tilde{f}^1_i)$ pour tous $i \in \{1,\dots,\nb\}$, on obtient le résultat annoncé.
\end{proof}

La base $\mathcal{B}_1=\basa$ est modèle puisque pour chaque $i$ dans $\{1,\dots,\nb\}$, $f^1_i$ appartient à $H^0(\CP^1,\Nua)$, mais elle n'est pas ordonnée. On détermine son signe relativement à l'orientation $\mathfrak{o}_{\gamma}$ à la partie \ref{evalbase}.

\subsubsection{Trivialisation. Cas $d'=0$}\label{orient0}

On construit une trivialisation de $\TRMc$ au-dessus de $\gamma$. Le long du chemin, on trivialise la partie définie par les $\nb$ derniers vecteurs $(f^0_1,\dots,f^0_{\nb})$ de la base standard $\mathcal{B}_0$ en se donnant $\nb$ sections constantes $t \in [0,1] \mapsto f^0_i \in H^0(\CP^1,\Nud)$ puisque l'application $u_t=u_0$ est constante. Pour la partie de $T|_{\gamma}\RM$ engendrée par $\Ker d\ob$ on ne peut pas définir les vecteurs $e_i^t$ comme précédemment car $$\ker d|_{\mapr}\ob \cap \ker d|_{\mapr}\Rev \neq \{0\}.$$ Puisque l'application $u_0$ est constante le long du chemin, on peut décrire la courbe universelle au-dessus $\gamma$ comme l'espace des déformations de la source. Considérons le produit $[0,1] ×\CP^1 ×\{u_0\}$ et les $\nb$ sections $t \in [0,1] \mapsto z_i^t \in \CP^1$ définies par le chemin $\gamma$. La courbe universelle $\mathcal{U}^d_{\nb}(X)|_{\gamma}$ (voir théorème \ref{Mck}) est l'éclaté $\Bl_{\xi}([0,1] ×\CP^1)$ du produit $[0,1] \times \CP^1$ en l'unique point de concours $(t_{\star},\xi)$ des $k'$ premières sections. (Voir figure \ref{Ugamma}.)
\begin{figure}[htp]
\begin{center}
\input{Ugamma.pstex_t}
\end{center}
\caption{$d'=0$, $u_t=u_0$, $\nb=5$, $k'=3$}\label{Ugamma}
\end{figure}\\
On définit ainsi une trivialisation de base $(e_1^t,\dots,e_{\nb}^t)_{t \in [0,1]} \subset \Ker d\ob$ dans $\Bl_{\xi}([0,1] \times \CP^1)$ issue de $(e_1^0,\dots,e_{\nb}^0)_{t=0}$.

\subsubsection{Évaluation de bases}\label{evalbase}

On compare les orientations induites par les bases  $\mathcal{B}_0$ et $\mathcal{B}_1$ dans une trivialisation du fibré tangent à l'image du chemin par le morphisme d'évaluation. Lorsque ces orientations coïncident, on en déduit que $\epsilon_{d',k'}=0$, dans le cas contraire on obtient $\epsilon_{d',k'}=1$.

\paragraph{Trivialisation à l'image. Cas $d'\neq0$.}\label{trivRX}

On se place dans une trivialisation de $T_{\RX^{\nb}}$ le long de $\gamma_*$ qui soit compatible avec le morphisme d'évaluation. On définit une trivialisation de $T|_{\gamma_*}\RX^{\nb}$ dont la base standard au-dessus de $\gamma_*(1)$ définit une base modèle ordonnée positive selon le procédé du paragraphe \ref{basd}. Reprenons la trivialisation $\phi$ définie en \ref{phi} ayant pour sections tautologiques $e_i : \gamma \subset \RMc \to \bigoplus_{i=1}^{\nb}\Ker d\ob_i|_{\gamma}$ pour $i=1,\dots,\nb$. Puisque $d\Rev$ est injective sur $\Ker d\ob|_{\gamma}$ (cf. proposition \ref{kerev}), on construit une famille libre de $\nb\ (=\dim \Ker d\ob)$ sections $v_i: \gamma_* \subset \RX^{\nb} \to T|_{\gamma_*}\RX^{\nb}$ comme images des sections $e_i^t$
$${v}_i(\gamma_*(t))=d\Rev(e_i^t)\; ;\ t \in [0,1].$$
Pour plus de clarté, on notera ${v}_i^{t}=v_i(\gamma_*(t))$. Quelque soit $t$ dans $[0,1]$ et $i$ dans $\{1,\dots,\nb\}$, $v_i^t$ appartient à $T_{\gamma_*(t)}\RX_{(i)}$. On complète la base en choisissant pour chaque $v_i^0$ un vecteur $w_i^0$ dans $T_{\gamma_*(0)}\RX_{(i)}$ de sorte que $\overline{w}_i^0=d\Rev(f_i^0)$ où $\overline{w}_i^0$ représente la classe au quotient de $w_i^0$ dans la suite exacte $$\xymatrix{0 \ar[r]& <v^0_i>_{\R} \ar[r]& T_{\gamma_*(0)}\RX_{(i)} \ar[r]& T_{\gamma_*(0)}{\RX_{(i)}}/<v^0_i>_{\R} \ar[r]& 0.}$$ Ainsi le couple $(v_i^0,w_i^0)$ est une base positive de $T_{\gamma_*(0)}\RX_{(i)}$ relativement à $\mathfrak{o}_{\gamma}$. On considère alors une trivialisation de $T|_{{\gamma}_*}\RX_{(i)}$ et on complète chaque base $(v^t_1,\dots,v^t_{\nb})_{t \in [0,1]}$ par une section issue de $w_i^0$, on obtient une famille libre de $\nb$ sections ${w}_i:\gamma_* \subset \RX^{\nb} \to T|_{\gamma_*}\RX^{\nb}$ telles que $\overline{w}_i^{t}=\overline{w}_i(\gamma_*(t)) \in \left(T_{\RX_{(i)}}/<v_i^t>_{\R}\right)_{t \in [0,1]}$ suivant le diagramme $$\xymatrix{0 \ar[r]& <v_i>_{\R}|_{\gamma_*} \ar[r] & T|_{\gamma_*}\RX_{(i)} \ar[d] \ar[r] & T_{\RX_{(i)}}/<v_i>_{\R}|_{\gamma_*} \ar[r] & 0 \\ & & \gamma_* \subset \RX_{(i)} \ar@/_/[u]_-{w_i} \ar@/_/[ru]_-{\overline{w}_i} & }$$
Ainsi, quelque soit $t$ dans $[0,1]$, $\mathcal{V}_t=\{{v}_1^t,\dots,{v}_{\nb}^t,{w}_1^t,\dots,{w}_{\nb}^t\}$ est une base positive de $T_{\gamma_*(t)}\RX^{\nb}$ relativement à $\mathfrak{o}_{\gamma}$ et compatible avec la décomposition qui suit (définie par le choix de $w_i^0$) $$T_{\gamma_*(t)}\RX^{\nb}=\bigoplus_{i=1}^{\nb}T_{\gamma_*(t)}\RX_{(i)} \cong \bigoplus_{i=1}^{\nb}\left(<v_i^t>_{\R} \oplus \left(T_{\gamma_*(t)}\RX_{(i)}/<v_i^t>_{\R}\right)\right).$$
\begin{rema}
Par construction $d|_{\gamma(1)}\Rev(\mathcal{B}_0)=\mathcal{V}_0$ et $(d\Rev)^{-1}(\mathcal{V}_1)$ est une base modèle ordonnée positive de $\TRMa$.
\end{rema}

\paragraph{Trivialisation à l'image. Cas $d'=0$.}\label{trivX0}

On considère l'unique trivialisation (à homotopie près) issue de $\Rev(\mathcal{B}_0)=(v^1_0,\dots,v_{\nb}^0,w_1^0,\dots,w_{\nb}^0)$ le long de $\gamma_*$ dont une base est donnée par un vecteur tangent à $\RA_0=\R A_t\, \forall t \in [0,1]$ à chaque point marqué $u_t(z_i)$ et son complémentaire \emph{positif} pour l'orientation définie par $\mathfrak{o}_{\gamma}$ dans $\RX_i$ pour tous $i \in \{1,\dots,\nb\}$. L'ensemble des $2(\nb-k')$ vecteurs $\{v_{k'+1}^t,\dots,v_{\nb}^t,w_{k'+1}^t,\dots,w_{\nb}^t\}$ sont choisis constants quelque soit $t$ dans $[0,1]$ puisque les sections $t \mapsto z_i^t$, $k'<i\leq \nb$ sont tautologiques ($t \mapsto z_i^0$) par définition de $\gamma$.
\begin{figure}[htp]
\begin{center}
\input{triv0.pstex_t}
\end{center}
\caption{$X=\CP^2$, $d=2[L]$, $k'=3$ }\label{triv0}
\end{figure}
Le résultat au point $\Rev \mapa$ est une base $\mathcal{V}_1$ de $T|_{\gamma_*(1)}\RX^{\nb}$ constituée de $\nb$ vecteurs $(v^1_1,\dots,v^1_{\nb})$ tangents à $\RA_1=\RA_0$ aux points $u_1(z_i^1)$ de sorte que l'orientation qu'ils induisent sur $\RA_1$ est toujours positive relativement au choix de $\mathfrak{o}_{\RA_0}$ ; puis de $\nb$ vecteurs $(w_1^0,\dots,w_{\nb}^0)$ complémentaires dans $T_{u_1(z_i^1)}\RX_{(i)}$ le tout formant une base positive pour le choix de $\mathfrak{o}_{\gamma}$. (Voir figure \ref{triv0}.)

\subsubsection{Matrices de transition}\label{mattrans}

On donne la matrice de l'image de la base $\mathcal{B}_1$ dans la base $\mathcal{V}_1$ par le morphisme d'évaluation réel. D'après le théorème \ref{W3}, $\tilde{\mathcal{B}}_1$ est une base modèle ordonnée positive de $\TRMatd$ puisque $\tilde{\gamma} \subset \Reg$ et $\tilde{\gamma}_* \cap \W = \emptyset$.
Plus précisément, le sens $\angle(\tilde{e}_i^1,\tilde{f}_i^1)$ est $+1$ relativement à $\mathfrak{o}_{\gamma}$ quelque soit $i$ dans $\{1,\dots,\nb\}$. On rappelle que le long de $\tilde{\gamma}$, chaque couple $(\tilde{e}_i^t,\tilde{f}^t_i)$ constitue une base du sous-espace $(d_{\tilde{\gamma}(t)}\Rev)^{-1}(T_{\tilde{\gamma}_*(t)}\RX_{(i)})$ de $T_{\tilde{\gamma}(t)}\RMc$.

\begin{lemm}\label{signarrivee}
Soit $\mathcal{B}_1=\basa$ la base modèle précédemment définie de $\TRMa$, on a~:
\begin{enumerate}
\item $\angle(e_i^1,f_i^1)=+1$ pour $\ell<i\leq\nb$~;
\item $\angle(e_{i}^1,f_{\ell+1-i}^1)=-1$ pour $i \in \{1,\dots,\ell\}$.
\end{enumerate}
\end{lemm}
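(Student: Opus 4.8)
The statement to establish is the computation of the sign $\angle(e_i^1,f_{j}^1)$ for the model base $\mathcal{B}_1$ obtained in Lemma \ref{c1}, comparing it to the positive base $\tilde{\mathcal{B}}_1=\basatd$ of $\TRMatd$ which, by construction ($\tilde{\gamma}\subset\Reg$, $\tilde{\gamma}_*\cap\W=\emptyset$), satisfies $\angle(\tilde{e}_i^1,\tilde{f}_i^1)=+1$ for every $i$. The whole point is to track how the homotopy $h^1_{t\in[0,1]}$ from Lemma \ref{c1} moves the zero of each section $\tilde f^1_i$ of $\OP(\nb-1)$, and which tangent vector $e_i^1$ the resulting section $f^1_i=h^1_1(\tilde f^1_i)$ is then paired with under the identification $\ker d\ob_i=T_{z_i^1}\CP^1$.

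First I would recall the local picture. At $\mapa$ we have $u_1$ an immersion, $\Nua\cong\OP(\nb-1)$, and a section of $H^0_{c_X}(\CP^1,\Nui)$ is (up to a positive real scalar) determined by the cyclic position of its unique real zero relative to the point $z_i^1$ it does \emph{not} vanish at. The sign $\angle(e_i^1,f^1_i)$ is $+1$ exactly when, at the marked point $u_1(z_i^1)$, the pair $\bigl(d\Rev(e_i^1),d\Rev(f^1_i)\bigr)$ agrees with the chosen orientation $\mathfrak o_\gamma$ of $\RX_{(i)}$; since $e_i^1$ is tangent to $\RA_1$ and positive for $\mathfrak o_{\RA_1}$, this reduces to a comparison on $\RP^1\cong\RC^1_\star$ between the germ of $f^1_i$ at $z_i^1$ and the complementary direction — equivalently, to the \emph{side} from which the zero of $f^1_i$ approaches $z_i^1$ along the homotopy. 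This is precisely the kind of bookkeeping already carried out in the proof of Lemma \ref{turnlemma} (paragraph \ref{turn}) for the departure point, and I would mirror that argument at the arrival point.

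The core of the proof is then case analysis on $i$ driven by the cyclic order of $u_1(\underline z\cup\underline{\tilde z})$ displayed in the proof of Lemma \ref{c1}. For $\ell<i\le\nb$ the section $\tilde f^1_i$ already vanishes on $\widehat{z^1_i}$ and $h^1_1$ leaves its zero on the same side of $z_i^1$ as in $\tilde{\mathcal{B}}_1$ (the marked points with index $>\ell$ are unmoved, $z^1_i=\tilde z^1_i$), so $f^1_i=\lambda\,\tilde f^1_i$ with $\lambda>0$ and hence $\angle(e_i^1,f_i^1)=\angle(\tilde e_i^1,\tilde f_i^1)=+1$; this gives item 1. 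For $i\in\{1,\dots,\ell\}$ the homotopy $h^1_1$ sends the zero of $\tilde f^1_i$ to the hat of $z^1_{\ell+1-i}$, i.e. $f^1_i\in H^0_{c_X}(\CP^1,\mathcal N_{u,-\ob_{\ell+1-i}(z)})$, so $f^1_i$ is now paired with $e^1_{\ell+1-i}\in T_{z^1_{\ell+1-i}}\CP^1$; reading off the cyclic order one checks that along this reindexing the non-vanishing marked point lies on the \emph{opposite} side relative to the configuration that made $\tilde{\mathcal{B}}_1$ positive — the reversal of the sub-block $\{z_1,\dots,z_{\ell/2}\}$ against $\{z_{\ell/2+1},\dots,z_\ell\}$ induced by $\tilde\gamma$ flips one orientation factor — whence $\angle(e^1_i,f^1_{\ell+1-i})=-1$, which is item 2. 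I would present both computations as a single figure-assisted verification, citing Figure \ref{config1} for the explicit $\CP^2$, $d=3[L]$ model.

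**Main obstacle.** The delicate point is not any computation but the sign bookkeeping itself: keeping consistent conventions for (a) the orientation $\mathfrak o_{\RA_1}$ versus the cyclic order on $\RP^1$, (b) which of $\gamma$ or its reversal we fixed in Lemma \ref{turnlemma}, and (c) the direction of the identification $\ker d\ob_i=T_{z_i^1}\CP^1$, so that the claimed $-1$ genuinely comes from the order-reversal of the $\ell$ "extra" points and is not absorbed by a compensating sign elsewhere. I expect to spend most of the effort making the cyclic-order diagram unambiguous and checking that exactly one orientation factor changes sign when passing from the index $i$ (vanishing locus $\widehat{z^1_i}$, as in $\tilde{\mathcal B}_1$) to the index $\ell+1-i$ (vanishing locus $\widehat{z^1_{\ell+1-i}}$, as in $\mathcal B_1$); once that is pinned down, both assertions of the lemma follow by the positivity of $\tilde{\mathcal B}_1$ exactly as in paragraph \ref{turn}.
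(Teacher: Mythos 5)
Your skeleton is the same as the paper's (compare $\mathcal{B}_1$ to the model base $\tilde{\mathcal{B}}_1$, which is positive because $\tilde{\gamma}\subset \Reg$ and $\tilde{\gamma}_*$ avoids $\W$, then transport signs along the homotopy $h^1$ of Lemma \ref{c1}), and your treatment of item 1 is fine since $e_i$ and $\tilde{e}_i$ coincide for $i>\ell$. But there is a genuine gap in item 2, and it sits exactly where you choose to locate the sign. You assert that $e_i^1$ is \emph{positive} for $\mathfrak{o}_{\mathbb{R}A_1}$ and then try to extract the $-1$ from the side on which the zero of the section approaches the marked point. The paper does the opposite, and for a reason: once $\mathfrak{o}_{\mathbb{R}A_1}$ is normalized so that $e^1_i$ is positive for $i>k'$, the vectors $e^1_i$ for $i\in\{1,\dots,k'\}$ are \emph{necessarily negative}, because the trivialisation $\phi$ of $\Ker d\ob$ is continuous along $\gamma$ and $\gamma$ crosses $\K$ transversally: the real smoothing of the node switches quadrants, so the marked points carried by the branch $C^1_{\star}$ return with reversed orientation relative to those carried by $C^2_{\star}$. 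Along $\tilde{\gamma}$, by contrast, the indices $1,\dots,\ell$ sit on $C^2_{\star}$, so $\tilde{e}^1_i$ stays positive for $i\le\ell$. The sign in item 2 is precisely the ratio $e^1_i/\tilde{e}^1_{\ell+1-i}(1)<0$; the normal components contribute nothing, since $\angle(\tilde{e}^1_i(1),f^1_i)=+1$ is preserved along $h^1$ because its image stays in $D_{\star}$, away from $\W$.

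Concretely, your premise is incompatible with the construction: either you keep the paper's normalisation of $\mathfrak{o}_{\mathbb{R}A_1}$ and then $e^1_i$ is negative for $i\le\ell$ (so the $e$-factor, not the $f$-factor, produces the $-1$), or you re-orient to make those $e^1_i$ positive, and then the positivity statement for $\tilde{\mathcal{B}}_1$ that you start from no longer reads the same way and your item-1 argument breaks for $\ell<i\le k'$. The alternative mechanism you invoke --- that the reindexing $i\mapsto \ell+1-i$ places the non-vanishing point of $f^1_{\ell+1-i}$ on the \emph{opposite} side and thereby flips one orientation factor --- is asserted but never established, and it cannot be, because the homotopy $h^1$ takes place entirely inside $D_{\star}$ and therefore preserves the sense of each pair $(\tilde{e}^1_i(t),h^1_t(\tilde{f}^1_i))$. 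The missing idea is the wall-crossing analysis of the $e$'s, i.e.\ the comparison of the two transverse paths $\gamma$ and $\tilde{\gamma}$ at the level of $\Ker d\ob$ via the orientations each endpoint induces on $\mathbb{R}A_1$.
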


\begin{proof}
On fixe une orientation $\mathfrak{o}_{\R A_1}$ sur la partie réelle de $A_1=u_1(\CP^1)$ de sorte que les vecteurs $\{e^1_i ; i=k'+1,\dots,\nb\}$ (c.-à-d. les éléments de la base dans $T_{C_{\star}^2}$) soient positifs. On rappelle que $d''\in H_2(X,\Z)$ est non nulle et réelle donc la partie réelle de l'image de $C_{\star}^2$ n'est ni vide ni réduite à un point de $\RX$ car elle contient des points marqués. Les vecteurs $\{e^1_i ; i=1,\dots,k'\}$ sont nécessairement orientés négativement relativement à $\mathfrak{o}_{\R A_1}$ puisque le chemin $\gamma$ est transverse au diviseur $\Kd$ de la frontière (cf. lemme \ref{mapr}). De même, en $\tilde{\gamma}(1)$ les vecteurs $\{\tilde{e}^1_i ; i=1,\dots,\ell\}$ et $\{\tilde{e}^1_i ; i=k'+1,\dots,\nb\}$ sont orientés positivement relativement à $\mathfrak{o}_{\R A_1}$ alors que les vecteurs $\{\tilde{e}^1_i ; i=\ell+1,\dots,k'\}$ le sont négativement (cf. lemme \ref{maprtilde}). Reprenons la construction de $\mathcal{B}_1$ et considérons le relevé du chemin $c'$ dans $(\RP^1)^{\nb} × \RMor_d(X,c_X)^*$ et l'homotopie de base $h^1$ de $H^0_{c_X}(\CP^1,\OP(\nb-1))$ associée (cf. lemme \ref{c1}). Une trivialisation de $T|_{c'}\RM$ issue de $\tilde{\mathcal{B}}_1=(\tilde{e}^1_1,\dots,\tilde{e}^1_{\nb},\tilde{f}^1_1,\dots,\tilde{f}^1_{\nb})$ compatible avec la suite exacte (\ref{decomp}) et définie par cette homotopie fournit une base de $\TRMa$ dont les $\nb$ derniers vecteurs sont $(f^1_1,\dots,f^1_{\nb})$. Puisque l'image $\Rev(c')$ n'intersecte pas $\W$ car elle est contenu dans $D_{\star}$ on obtient le long de la première composante une trivialisation $(\tilde{e}_1^1(t),\dots,\tilde{e}_{\nb}^1(t))$, $t \in [0,1]$ de $T|_{c'}(\RP^1)^{\nb}$ qui vérifie $\angle(\tilde{e}^1_i(1),f^1_i)=+1$ relativement à $\mathfrak{o}_{\gamma}$ car $\angle(\tilde{e}_i^1(0),\tilde{f}^1_i) = +1$, pour $i \in \{1,\dots,\nb\}$. Cependant, pour $i \in \{1,\dots,\ell\}$, $e^1_i$ étant orienté négativement relativement à $\mathfrak{o}_{\R A_1}$, il est opposé à l'orientation définie par $\tilde{e}_{i}^1(0)$. Or, cette orientation est aussi celle définie par $\tilde{e}^1_{\ell+1-i}(1) $ car $\ell+1-i \in \{1,\dots,\ell\}$. Alors que, pour $i \in \{\ell+1,\dots,\nb\}$, $e^1_i$ est orienté positivement relativement à $\mathfrak{o}_{\R A_1}$ et donc relativement à $\tilde{e}^1_i(1)$. Finalement, $\tilde{e}^1_i(1)= \lambda_+^i e_i^1$ où $\lambda_+^i \in \R_+^*$ si $i \in \{\ell+1,\dots,\nb\}$ alors que $\tilde{e}^1_{\ell+1-i}(1)=\lambda_-^i e_i^1$ avec $\lambda_-^i \in \R_-^*$ lorsque $i \in \{1,\dots,\ell\}$. En conséquence, $f^1_{\ell+1-i}(z^1_i)=\lambda_-^i \tilde{f}^1_i(\tilde{z}^1_i)$ pour $i \in \{1,\dots,\ell\}$ et $f^1_i(z^1_{i})=\lambda_+^i \tilde{f}^1_i(\tilde{z}^1_i)$ pour $i \in \{\ell+1,\dots,\nb\}$, avec $\lambda_-^i<0$ et $\lambda_+^i>0$, d'où le résultat.
\end{proof}

\begin{lemm}\label{matrat}
Soit $d' \in red^d$, $k' \in \{k_{d'}+2,\dots,\nb\}$, $\gamma$ un chemin défini comme en \ref{chemin} et $\mathcal{B}_1$, $\mathcal{V}_1$ les bases précédemment définies de $\TRMa$ et $T_{\gamma_*(1)}\RX^{\nb}$ respectivement. Alors, dans la trivialisation définie en \ref{trivRX}, l'image de la base $d_{\gamma(1)}\Rev(\mathcal{B}_1)$ dans la base $\mathcal{V}_1$ s'écrit matriciellement
\[\mathcal{B}_1|_{\mathcal{V}_1}=\left(\begin{array}{c|c|c}I_{\nb} & 0 & 0\\ \hline 0 & -J_{\ell}& 0 \\ \hline 0 & 0 & I_{\nb-\ell}\end{array}\right)\]
où $J=\left(\begin{array}{cccc}0 & \cdots & 0 & 1 \\ 0 & \cdots & 1& 0\\ \vdots & {\mathinner{\mkern2mu\raise1pt\hbox{.}\mkern2mu
\raise4pt\hbox{.}\mkern2mu\raise7pt\hbox{.}\mkern1mu}} & \vdots & \vdots \\ 1 & \cdots & 0 & 0\end{array}\right)$ est la matrice de permutation inversée.
\end{lemm}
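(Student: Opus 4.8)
The plan is to compute the transition matrix column by column, starting from the observation that $\gamma(1)=\mapa$ is a \emph{regular} point of the real evaluation morphism: its source is $\CP^1$, $u_1$ is an immersion, and the $\nb$ marked points are pairwise distinct, so $\Nua\cong\OP(\nb-1)$ by property \ref{degnormal} and Proposition \ref{kerev} gives $\ker d_{\mapa}\Rev\cong H^0(\CP^1,\Nua\otimes\OP(-z^1))=H^0(\CP^1,\OP(-1))=0$; since $\dim\RMc=2\nb=\dim\RX^{\nb}$, the differential $d_{\gamma(1)}\Rev$ is then a linear isomorphism, so $d\Rev(\mathcal{B}_1)$ is a basis of $T_{\gamma_*(1)}\RX^{\nb}$ and the matrix is well defined. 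The first $\nb$ columns, attached to $(e_1^1,\dots,e_{\nb}^1)$, are immediate: by the very construction of the trivialization in \ref{trivRX} the section $v_i$ of $T|_{\gamma_*}\RX_{(i)}$ is the image under $d\Rev$ of the tautological section $e_i$ of $\Ker d\ob_i|_{\gamma}$, and Lemma \ref{c1} leaves the $e_i^1$ unchanged when passing from $\overline{\mathcal{B}}_1$ to $\mathcal{B}_1$; hence $d\Rev(e_i^1)=v_i^1$, so these columns carry the identity in the $v$-rows and vanish in the $w$-rows.

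Next I would treat the $\nb$ columns attached to $(f_1^1,\dots,f_{\nb}^1)$. Fix $i$ and set $m=\ell+1-i$ if $1\le i\le\ell$ and $m=i$ if $\ell<i\le\nb$, so that $f_i^1\in H^0_{c_X}(\CP^1,\mathcal{N}_{u_1,-\ob_m(z^1)})$ by Lemma \ref{c1}. A section of $\mathcal{N}_{u_1,-\ob_m(z^1)}$ represents a first-order deformation of the stable map that moves only the $m$-th marked point, so the exact sequence \eqref{eqdecompfib} of Proposition \ref{decompfib} shows that $d\Rev(f_i^1)$ lies in $T_{u_1(z^1_m)}\RX_{(m)}=\langle v_m^1,w_m^1\rangle_{\R}$; and since $\Nua\cong\OP(\nb-1)$ with $\nb$ distinct points forces $\mathcal{N}_{u_1,-\ob_m(z^1)}\cong\OP(0)$, this space of sections is a real line, of which $w_m^1$ is the $d\Rev$-image because $(d\Rev)^{-1}(\mathcal{V}_1)$ is an ordered model basis of $\TRMa$. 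Consequently $d\Rev(f_i^1)=\beta_i w_m^1$ for a unique $\beta_i\in\R$, with no component along $v_m^1$, and $\sign\beta_i=\angle(e_m^1,f_i^1)$: indeed $(v_m^1,w_m^1)$ is a positive basis of $T\RX_{(m)}$ for $\mathfrak{o}_{\gamma}$ (as $\gamma_*$ avoids $\Omega_{\gamma}$), so $(d\Rev(e_m^1),d\Rev(f_i^1))=(v_m^1,\beta_i w_m^1)$ is positive precisely when $\beta_i>0$. Feeding in Lemma \ref{signarrivee} — $\angle(e_i^1,f_i^1)=+1$ for $\ell<i\le\nb$ and $\angle(e_{\ell+1-i}^1,f_i^1)=-1$ for $1\le i\le\ell$ — and rescaling each $f_i^1$ by the positive real $|\beta_i|^{-1}$ (which changes neither the model-basis property nor the $\angle$'s), I obtain $d\Rev(f_i^1)=w_i^1$ for $\ell<i\le\nb$, contributing the block $I_{\nb-\ell}$, and $d\Rev(f_i^1)=-w_{\ell+1-i}^1$ for $1\le i\le\ell$, contributing the block $-J_{\ell}$ since $i\mapsto\ell+1-i$ is the reversal of $\{1,\dots,\ell\}$. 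Collecting these three blocks in the ordered bases $\mathcal{B}_1=\basa$ and $\mathcal{V}_1=(v_1^1,\dots,v_{\nb}^1,w_1^1,\dots,w_{\nb}^1)$ yields the stated matrix.

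The main obstacle is the middle step of the second paragraph: ensuring that $d\Rev(f_i^1)$ is a genuine scalar multiple of $w_m^1$ alone, with the correct sign. The absence of a $v_m^1$-component is forced by the one-dimensionality of $H^0_{c_X}(\CP^1,\mathcal{N}_{u_1,-\ob_m(z^1)})$, so that $f_i^1$ and the generator defining $w_m^1$ are automatically proportional; and the sign of $\beta_i$ is precisely what Lemma \ref{signarrivee} provides, after tracking through the homotopies used in the proofs of Lemmas \ref{turnlemma} and \ref{c1} how reversing part of the cyclic order of the marked points flips the orientation of exactly the $\ell$ vectors $e_1^1,\dots,e_{\ell}^1$ relative to the $\tilde e_i^1$. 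Everything else is bookkeeping with block matrices.
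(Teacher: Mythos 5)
Your proof is correct and follows essentially the same route as the paper: the first $\nb$ columns come from the tautological identity $v_i^1=d\Rev(e_i^1)$ built into the trivialization of \ref{trivRX}, and the $f$-columns are read off from Lemma \ref{c1} together with the signs $\lambda_{\pm}^i$ established in (the proof of) Lemma \ref{signarrivee}. Your added details — the regularity of $\gamma(1)$ via $H^0(\CP^1,\OP(-1))=0$ and the one-dimensionality of $H^0_{c_X}(\CP^1,\mathcal{N}_{u_1,-\ob_m(z^1)})\cong H^0(\OP(0))$ forcing proportionality — are exactly what the paper leaves implicit behind its \og on a vu que\fg, the only cosmetic difference being that the paper states the $w$-relations in the quotient classes $\overline{w}_i^1$ rather than claiming a vanishing $v$-component outright.
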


\begin{proof}
La base de $T_{\gamma_*(1)}\RX^{\nb}$ définie par l'image de $\mathcal{B}_1$ est donnée par $d_{\gamma(1)}\Rev(e^1_1,\dots,e^1_{\nb})$ sur ses premières composantes, c'est-à-dire les premières composantes $(v_1^1,\dots,v_{\nb}^1)$ de $\mathcal{V}_1$ et dans le même ordre (cf. \ref{trivRX}). D'autre part, on a vu que lorsque $i \in \{\ell+1,\dots,\nb\}$, $f^1_i = \lambda_+^i (d\Rev)^{-1}(w_i^1)$ avec $\lambda_+^i > 0$ et donc, après éventuellement avoir normalisé, on obtient l'égalité $d_{\gamma(1)}\Rev(f^1_{\ell+1},\dots,f^1_{\nb})=(\overline{w}^1_{\ell+1},\dots,\overline{w}^1_{\nb})$. Enfin, pour $i \in \{1,\dots,\ell\}$, $f^1_i = \lambda^i_- (d\Rev)^{-1}(w_{\ell+1-i}^1)$ avec $\lambda^i_- <0$ et donc, après avoir éventuellement normalisé les bases, $d_{\gamma(1)}\Rev(f^1_{1},\dots,f^1_{\ell})=(-w_{\ell},\dots,-w_{1})$. La matrice associée à cette description est donc la matrice carrée $-J$ de taille $\ell$.
\end{proof}

Lorsque $d'=0$ on a construit une base standard non ordonnée $\mathcal{B}_1 = \basa$ de $\TRMd$ dont on décrit les propriétés relativement au morphisme d'évaluation dans le lemme suivant.

\begin{lemm}\label{matnul}
Soit $d'=0$, $k' \in \{2,\dots,\nb\}$, $\gamma$ un chemin défini comme en \ref{chemin} et $\mathcal{B}_1$, $\mathcal{V}_1$ les bases précédemment définies de $\TRMa$ et $T_{\gamma_*(1)}\RX^{\nb}$ respectivement. Alors, dans la trivialisation définie en \ref{trivX0} et après normalisation, la base image $d_{\gamma(1)}\Rev(\mathcal{B}_1)$ s'écrit matriciellement dans la base $\mathcal{V}_1$ \[\mathcal{B}_1|_{\mathcal{V}_1}=\left(\begin{array}{c|c|c|c}-I_{k'} & 0 & 0 & 0 \\ \hline 0 & I_{\nb-k'}& 0 & 0\\ \hline 0 & 0 & J_{k'}& 0 \\ \hline 0 & 0 & 0 & I_{\nb-k'}\end{array}\right).\]
\end{lemm}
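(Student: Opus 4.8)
The plan is to read off the matrix of $d_{\gamma(1)}\Rev(\mathcal{B}_1)$ in the basis $\mathcal{V}_1$ one block at a time, separating the indices $i\in\{1,\dots,k'\}$ (the marked points carried by the null bubble) from the spectator indices $i\in\{k'+1,\dots,\nb\}$, and using the explicit description of $\gamma$, of the universal curve $\mathcal{U}_{\nb}^d(X)|_{\gamma}=\Bl_{\xi}([0,1]\times\CP^1)$ and of the trivializations of $T|_{\gamma}\RMc$ and $T|_{\gamma_*}\RX^{\nb}$ given in \ref{degal0}, \ref{orient0} and \ref{trivX0}. Recall that here $u_t\equiv u_0$ along $\gamma$, so the last $\nb$ vectors of $\mathcal{B}_1$ are the constant sections $t\mapsto f_i^0\in H^0(\CP^1,\Nu)$ of $\mathcal{B}_0$, whereas the first $\nb$ vectors $e_i^1$ are obtained by trivializing $\Ker d\ob$ along $\gamma$ inside the blown-up universal curve.

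First I would dispose of the spectator indices $i>k'$. For those the section $t\mapsto z_i^t\equiv z_i^0$ is tautological, so $e_i^1=e_i^0$; since in \ref{trivX0} the vectors $v_i^t$ and $w_i^t$ are chosen constant for $i>k'$ and $\mathcal{V}_0=\Rev(\mathcal{B}_0)$, one gets at once $d\Rev(e_i^1)=v_i^0=v_i^1$ and $d\Rev(f_i^1)=d\Rev(f_i^0)=w_i^0=w_i^1$, which gives the two identity blocks $I_{\nb-k'}$.

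The core of the argument, which I expect to be the main obstacle, is the block $-I_{k'}$: one has to show that for $i\le k'$ the trivialized vector $e_i^1$ is \emph{negatively} oriented along $\R A_0=u_0(\CP^1)$ whereas $v_i^1$ stays positively oriented, so that $d\Rev(e_i^1)=-v_i^1$ after normalization. The sign of $v_i^1$ is controlled exactly as in the case $d'\neq0$: $\gamma$ being small, the image path $t\mapsto u_t(z_i^t)$ stays in the contractible disc $D_{\star}$ around $u_{\star}(\xi)$, hence it cannot wind around $\R A_0\simeq\RP^1$ and the continuation of the positively oriented $v_i^0$ stays positive. For $e_i^1$ I would work in blow-up coordinates near the blown-up point $(t_{\star},\xi)$: with $\tau=t-t_{\star}$ and $x$ a coordinate on $\CP^1$ vanishing at $\xi$, the chart $(\tau,a)$ with $x=\tau a$ covers a neighbourhood of the part of the exceptional divisor met by the colliding sections, the fibre of $\Ker d\ob$ over $z_i^t$ is spanned there by $\partial_a=\tau\,\partial_x$, and the $i$-th section crosses $(t_{\star},\xi)$ transversally (this is the transversality of $\gamma$ to $\K$), hence meets the exceptional divisor at a finite nonzero value of $a$. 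Therefore the tautological section $e_i^t$, which is a positive multiple of $\partial_x$ throughout $t<t_{\star}$ (it starts from the positively oriented $e_i^0$), must be a \emph{negative} multiple of $\partial_x$ for $t>t_{\star}$ in order to extend continuously across $t_{\star}$ as a nonzero multiple of $\partial_a$, precisely because $\tau$ changes sign there. Thus $e_i^1$ carries the orientation opposite to $e_i^0$ along $\R A_0$, and applying $d\Rev=du_0$ gives $d\Rev(e_i^1)=-v_i^1$ for every $i\le k'$.

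Finally I would treat the block $J_{k'}$, namely $d\Rev(f_i^1)=w_{k'+1-i}^1$ for $i\le k'$. Since $f_i^1=f_i^0$ and $\mathcal{B}_0$ is a model basis (Definition \ref{model}), $f_i^0\in H^0(\CP^1,\Nuid)$ is a deformation moving only the marked point at $z_i^0$; but after the relabelling by the order-reversing permutation built into the definition of $\gamma$ (``$\gamma(0)=\gamma(1)$ apr\`es renversement''), this marked point is the $(k'+1-i)$-th point of $\gamma(1)$, so $f_i^1\in H^0(\CP^1,\mathcal{N}_{u_1,-\ob_{k'+1-i}(z^1)})$ and $d\Rev(f_i^1)=d\Rev(f_i^0)=w_i^0$ lands in $T_{u_0(z_i^0)}\RX_{(k'+1-i)}$. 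Comparing with $\mathcal{V}_1$: both $(v_i^0,w_i^0)$ and $(v_{k'+1-i}^1,w_{k'+1-i}^1)$ are positively oriented bases of that tangent plane for $\mathfrak{o}_{\gamma}$ whose first vector is positively oriented along $\R A_0$, and $u_1(z_{k'+1-i}^1)=u_0(z_i^0)$, so $w_i^0$ and $w_{k'+1-i}^1$ agree up to a positive scalar modulo $v$; after normalizing the bases one gets $d\Rev(f_i^1)=w_{k'+1-i}^1$, with no sign, which is the anti-diagonal block $J_{k'}$. Assembling the four blocks yields the claimed matrix. The computation runs parallel to Lemmas \ref{c1}, \ref{signarrivee} and \ref{matrat} for $d'\neq0$, the difference being that here the orientation reversal sits in $\Ker d\ob$ (the $e$-part) rather than in $H^0(\CP^1,\Nu)$ (the $f$-part), because along $\gamma$ it is the source fibre, not the map, that degenerates.
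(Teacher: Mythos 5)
Votre preuve est correcte et suit pour l'essentiel la même démarche que celle de l'article : calcul bloc par bloc, avec le bloc $-I_{k'}$ provenant du renversement d'orientation du sous-fibré $\Ker d\ob_i$ à la traversée du diviseur exceptionnel de $\Bl_{\xi}([0,1]\times\CP^1)$, le bloc $J_{k'}$ provenant du renversement de l'indexation des points marqués dans $\gamma(1)$, et les blocs identité pour les indices $i>k'$. Votre calcul explicite en coordonnées d'éclatement ($x=\tau a$, $\partial_a=\tau\,\partial_x$) est simplement une version détaillée de l'argument du texte selon lequel $\R C^1_{\star}$ est dual de $w_1(\R\,\mathcal{U}^d_{\nb}(X)|_{\gamma})$.
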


\begin{proof}
Les sections concourantes $z_i^t$, $1 \leq i \leq k'$ induisent dans la trivialisation par $e_i^1$ une orientation opposée à celle fixée sur $\RC_0=\RC_1$ par $\mathfrak{o}_{\RA_0}$ puisque $\RC^1_{\star} = w_1(\R \mathcal{U}^d_{\nb}(X)|_{\gamma})^{\vee}$. Comme $\RA_0=\RA_1$, l'image de $e_i^1$ par $d\Rev$ est dans la direction de $v_i^1$ mais dans le sens opposé pour $1 \leq i \leq 0$. Les $\nb-k'$ dernières sections $z_i^t$, $k' < i$ ne rencontrant pas $w_1(\R \mathcal{\mathcal{U}}^d_{\nb}(X)|_{\gamma})$ (et pouvant être choisies tautologiques), l'image de $e_i^1$ par $d\Rev$ est dans la direction et le sens de $v_i^1$ dès que $k' < i$. (Voir figure \ref{Ugamma}). Finalement, d'après \ref{orient0},  $\mathcal{B}_1=(e_1^1,\dots,e_{\nb}^1,f_1^0,\dots,f_{\nb}^0)$, la matrice de permutation associée aux vecteurs de base dans la composante $H^0(\CP^1,\Nua)$ de la décomposition \eqref{decomp} est donnée par $J_{k'} \oplus I_{\nb-k'}$ et correspond au renversement des sections du faisceau normal issues de $(f^0_1,\dots,f^0_{k'})$ d'après le choix de $\gamma$. Chaque base $(v^1_i,d\Rev(f^1_{k'-i}))$ de $T_{\tilde{\gamma}(1)}\RX_i$ pour $1\leq i \leq k'$ étant positive puisque l'orientation de $\RA_1$ induite par $v^1_i$ est inchangée par rapport à l'orientation de $\RA_0$. De plus, par construction tous les autres vecteurs sont identiques après normalisation, c'est-à-dire que $d|_{\gamma(1)}\Rev(e^1_i)=v^1_i$ et $d|_{\gamma(1)}\Rev(f^1_i)=\overline{w}_i^1$ dès que $i>k'$.
\end{proof}

\subsubsection{Conclusion}

On évalue l'orientation induite par les bases d'arrivée des trivialisations précédemment définies. Puisque la base de départ était choisie positive et que l'image du chemin ne rencontre pas $\W$, il vient que cette orientation est compatible avec celle définie sur $\RX \setminus \W$ si et seulement si le déterminant des matrices précédemment définies est positif.

\begin{coro}
Soit $d' \in red^d$, $k'>k_{d'}$ et $k'>1$. Soit $\K$ une composante connexe de la frontière incluse dans $\RKd$ (comme décrit en \ref{diviseurs}) et de codimension un dans $\RMc$. Cette dernière participe à un représentant dual pour la première classe de Stiefel-Whitney de la composante connexe de $\RMc$ qui la contient si et seulement si  $E(\frac{k'-k_{d'}}{2}) \equiv 1 \mod (2)$ où $E$ désigne la partie entière.
\end{coro}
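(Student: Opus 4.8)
The plan is to feed the transition matrices of Lemmas~\ref{matrat} and~\ref{matnul} into the orientation-comparison principle set up at the beginning of Section~\ref{demo1}. That principle says $\K$ contributes to a dual representative of $w_1(\RMc)$ exactly when crossing $\K$ transversally along the path $\gamma$ of~\ref{chemin} reverses the pulled-back orientation $\mathfrak{o}^{*}=(\Rev)^{*}(\mathfrak{o}_{\gamma}\times\dots\times\mathfrak{o}_{\gamma})$. Now the starting basis $\mathcal{B}_0$ at $\gamma(0)$ is positive for $\mathfrak{o}_{\gamma}$ (see~\ref{basd}); the auxiliary path $\tilde{\gamma}\subset\Reg$ together with Lemma~\ref{turnlemma} ties $\mathcal{B}_0$ to the trivialization of $T|_{\gamma}\RMc$ we transport; the basis $\mathcal{V}_1$ of $T_{\gamma_*(1)}\RX^{\nb}$ is positive for $\mathfrak{o}_{\gamma}$ (see~\ref{trivRX} and~\ref{trivX0}); and $\gamma_*$ avoids $\W$, so the comparison is compatible with the twisted coefficients $\mathcal{Z}^{*}$. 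Hence $\K$ contributes if and only if $\det(\mathcal{B}_1|_{\mathcal{V}_1})<0$. (For $k'=k_{d'}+1$ there is nothing to check: Proposition~\ref{imcodim1} shows the divisor is a regular locus of the evaluation morphism, hence does not contribute, which agrees with $E(\tfrac12)=0$; so one may assume $k'\geq k_{d'}+2$.)

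Next I would evaluate the two determinants. If $d'\neq 0$, Lemma~\ref{matrat} gives $\det(\mathcal{B}_1|_{\mathcal{V}_1})=(-1)^{\ell}\det(J_{\ell})$; since $J_{\ell}$ is the matrix of the order-reversal $i\mapsto\ell+1-i$ we have $\det(J_{\ell})=(-1)^{E(\ell/2)}$, and $\ell$ is even by Definition~\ref{ell}, so $\det(\mathcal{B}_1|_{\mathcal{V}_1})=(-1)^{E(\ell/2)}$; again by Definition~\ref{ell}, $\ell$ is the even one among $k'-k_{d'}$ and $k'-k_{d'}-1$, whence $E(\ell/2)=E(\tfrac{k'-k_{d'}}{2})$, and the determinant is negative iff $E(\tfrac{k'-k_{d'}}{2})\equiv 1 \mod (2)$. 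If $d'=0$, then $k_{d'}=c_1(X)\cdot 0-1=-1$, so $E(\tfrac{k'-k_{d'}}{2})=E(\tfrac{k'+1}{2})$; Lemma~\ref{matnul} gives $\det(\mathcal{B}_1|_{\mathcal{V}_1})=(-1)^{k'}\det(J_{k'})=(-1)^{\,k'+E(k'/2)}$, and a one-line parity check — the difference $k'+E(k'/2)-E(\tfrac{k'+1}{2})$ equals $k'$ when $k'$ is even and $k'-1$ when $k'$ is odd, hence is even in both cases — identifies this with $(-1)^{E((k'+1)/2)}$; so the determinant is again negative iff $E(\tfrac{k'-k_{d'}}{2})\equiv 1 \mod (2)$. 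This gives the stated equivalence (equivalently, $k'-k_{d'}\equiv 2$ or $3 \mod (4)$, as in Theorem~\ref{theo}).

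The main obstacle is not this closing arithmetic but the justification of the criterion: one must compare the trivialization transported along $\gamma$, which crosses the critical locus of $\Rev$ along $\K$, with the trivialization of $T|_{\gamma_*}\RX^{\nb}$ along the image $\gamma_*$, which stays away from $\W$. This is exactly why the path $\tilde{\gamma}$ is introduced inside the regular locus $\Reg$, where $\Rev$ is a local diffeomorphism and so transports orientations faithfully: Lemma~\ref{turnlemma}, the homotopies of Lemma~\ref{c1} and the matrix computations of Lemmas~\ref{matrat} and~\ref{matnul} are what bridge the two trivializations at the two endpoints. As all of that has already been established, the work left here is the bookkeeping of signs and the observation that both cases collapse to the single condition on the parity of $E(\tfrac{k'-k_{d'}}{2})$.
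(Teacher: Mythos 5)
Your proposal is correct and follows essentially the same route as the paper: plug the transition matrices of Lemmas~\ref{matrat} and~\ref{matnul} into the orientation-comparison criterion (contribution iff the determinant is negative), then reduce both determinants to the parity of $E(\frac{k'-k_{d'}}{2})$ via $\frac{\ell}{2}=E(\frac{k'-k_{d'}}{2})$ and $k'+E(\frac{k'}{2})\equiv E(\frac{k'+1}{2}) \bmod 2$ with $k_0=-1$. Your extra remarks (the explicit justification of the sign criterion via $\tilde{\gamma}\subset\Reg$, and the separate dispatch of the boundary case $k'=k_{d'}+1$ using Proposition~\ref{imcodim1}) are consistent with what the paper establishes elsewhere and do not change the argument.
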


\begin{proof}
C'est une conséquence des lemmes précédents en observant que $$\det \left(\begin{array}{c|c|c}I_{\nb} & 0 & 0\\ \hline 0 & -J_{\ell}& 0 \\ \hline 0 & 0 & I_{\nb-\ell}\end{array}\right)=\det -J_{\ell}=(-1)^{\frac{\ell}{2}} \mod (2)$$ puisque $\ell \equiv 0 \mod (2)$ et $$\det \left(\begin{array}{c|c|c|c}-I_{k'} & 0 & 0 & 0 \\ \hline 0 & I_{\nb-k'}& 0 & 0\\ \hline 0 & 0 & J_{k}& 0 \\ \hline 0 & 0 & 0 & I_{\nb-k'}\end{array}\right)=(-1)^{k'} \det J_k=(-1)^{k'+E(\frac{k'}{2})}.$$
Il suffit de vérifier, d'une part que $\frac{\ell}{2} = E(\frac{k'-k_{d'}}{2})$ d'après le choix de $\ell$ et d'autre part que ${k'}+E(\frac{k'}{2}) \equiv E(\frac{k'+1}{2}) \mod (2)$. On rappelle que par convention $k_0=-1$ et on en déduit le résultat.
\end{proof}

\begin{coro}\label{corodet}
Soit $(X,c_X)$ une surface projective convexe équipée d'une structure réelle. La première classe de Stiefel-Whitney de $\RMc$ admet comme représentant
$$w_1(\RMc)=(\Rev)^*[w_1(\RX^{\nb})]+\sum_{\substack{d' \in \red\\k_{d'}<k'\leq \nb}}\epsilon_{d',k'}(\RKd)^{\vee}$$
avec $\epsilon_{d',k'} \in \{0,1\}$ et $\epsilon_{d',k'}=1$ si et seulement si $k'-k_{d'}=2 \textrm{ ou } 3 \mod (4)$.
\end{coro}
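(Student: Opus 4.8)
My plan is to reduce the theorem to the two determinant computations already isolated in the corollary that closes the excerpt, and to observe that the final statement is exactly a rephrasing of that corollary together with Proposition \ref{indept}. The skeleton is therefore: (1) invoke Proposition \ref{indept} to know that $w_1(\RMc)$ has the stated form with coefficients $\epsilon_{d',k'}\in\{0,1\}$ depending only on $(d',k')$, and that only divisors with $k'>k_{d'}+1$ can contribute (the terms with $k'=k_{d'}$ or $k'=k_{d'}+1$ are regular for the real evaluation morphism, hence invisible to $w_1$, which matches the residue classes $k'-k_{d'}\equiv 0,1\bmod 4$ being assigned $\epsilon=0$); (2) for a fixed component $\K\subset\RKd$ of codimension one, use the machinery of Sections \ref{chemin}--\ref{evalbase}: choose the generic point $\mapr$ (Lemma \ref{mapr}), the comparison point $\maprtd$ (Lemma \ref{maprtilde}, which is regular by Proposition \ref{reg}), build the transverse path $\gamma$ and its regular companion $\tilde\gamma$, transport a positive model basis $\mathcal{B}_0$ along both paths, and arrive at the bases $\mathcal{B}_1$, $\mathcal{V}_1$; (3) read off, via Lemmas \ref{matrat} and \ref{matnul}, the transition matrix $\mathcal{B}_1|_{\mathcal{V}_1}$ and conclude that $\epsilon_{d',k'}=1$ precisely when that determinant is negative.

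The concrete computation is the one already performed in the last corollary: in the case $d'\neq 0$ the determinant is $\det(-J_\ell)=(-1)^{\ell/2}$ with $\ell$ even and $\ell/2=E\big(\tfrac{k'-k_{d'}}{2}\big)$, so $\epsilon_{d',k'}=1\iff E\big(\tfrac{k'-k_{d'}}{2}\big)$ is odd; in the case $d'=0$ the determinant is $(-1)^{k'}\det J_{k'}=(-1)^{k'+E(k'/2)}$, and using the convention $k_0=-1$ one checks $k'+E(k'/2)\equiv E\big(\tfrac{k'+1}{2}\big)=E\big(\tfrac{k'-k_0}{2}\big)\bmod 2$, so the two cases are uniform. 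Finally one translates the arithmetic condition ``$E\big(\tfrac{m}{2}\big)$ odd'' into a condition on $m=k'-k_{d'}$ modulo $4$: writing $m=4q+s$ with $s\in\{0,1,2,3\}$ gives $E(m/2)=2q+E(s/2)$, which is odd exactly when $E(s/2)$ is odd, i.e. when $s\in\{2,3\}$. Hence $\epsilon_{d',k'}=1\iff k'-k_{d'}\equiv 2$ or $3\pmod 4$, which is the assertion.

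The only point requiring genuine care — and the step I expect to be the main obstacle — is the comparison step (3): one must be sure that the two trivialisations along $\gamma$ and along $\tilde\gamma$ are compatible, i.e. that they agree at the common endpoint data and differ only by the reparametrisation/reindexing that produces the inverted permutation matrix $J$ and the sign on the $\ell$ ``extra'' marked points. This is exactly the content of Lemma \ref{turnlemma} (the ``homotopie au point'' making $\basdtd$ a positive standard basis of $\TRMd$), of the forgetful-map trivialisation lemma comparing $\TRMc/\Ker d\ob|_{\tilde\gamma}$ with $\TRMc/\Ker d\ob|_{\gamma}$, and of Lemma \ref{signarrivee} (the signs $\angle(e^1_i,f^1_{\ell+1-i})=-1$). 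Once these are granted, Lemma \ref{matrat} — respectively Lemma \ref{matnul} for $d'=0$, where the subtlety is instead that $\RC^1_\star$ realises $w_1^\vee$ of the restricted universal curve, forcing the block $-I_{k'}$ — gives the matrix, and the determinant computation is immediate.

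Putting these together: combining Proposition \ref{indept}, the corollary above, and the elementary modulo-$4$ translation yields precisely
$$w_1(\RMc)=(\Rev)^*w_1(\RX^{\nb})+\sum_{\substack{d'\in\red\\k_{d'}<k'\leq\nb}}\epsilon_{d',k'}.[\RKd]^{\vee},$$
with $\epsilon_{d',k'}=1$ if and only if $k'-k_{d'}\equiv 2$ or $3\pmod 4$. For a permutation $\tau$ of order two (rather than the identity) no new computation is needed: by the last remark in Proposition \ref{indept}, the real structures $c_{\overline{\mathcal{M}},\tau}$ on $\Mc$ are pairwise isomorphic under relabelling up to conjugacy class of $\tau$, so the homological contribution of a component of $\RKd$ depends only on the calibre $(k',d')$ and not on $\tau$; thus the case $\tau=\mathrm{id}$ treated above already determines all $\epsilon_{d',k'}$. $\square$
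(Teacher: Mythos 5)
Your proposal is correct and follows the paper's route exactly: the corollary is obtained as a direct consequence of the preceding determinant corollary (itself resting on Lemmas \ref{matrat} and \ref{matnul} together with Proposition \ref{indept}), combined with the elementary equivalence $E\bigl(\tfrac{k'-k_{d'}}{2}\bigr)\equiv 1 \bmod 2 \Leftrightarrow k'-k_{d'}\equiv 2$ ou $3 \bmod 4$, which you verify correctly, including the uniformisation of the $d'=0$ case via the convention $k_0=-1$. One caveat on your closing paragraph: the present corollary concerns only $\tau=\mathrm{id}$, and your claim that the case $\tau\neq\mathrm{id}$ requires ``no new computation'' because of Proposition \ref{indept} is not right --- that proposition gives independence of the choice of component for a \emph{fixed} conjugacy class of $\tau$, not independence of $\tau$ itself, and the paper does carry out a separate (if parallel) computation with modified transition matrices (extra blocks $-J_2$ and $J_2$) to reach Theorem \ref{theo}.
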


\begin{proof}
C'est la conséquence directe du corollaire précédent. On choisit l'expression qui découle de l'équivalence $$E(\frac{k'-k_{d'}}{2}) \mod (2) =1 \Leftrightarrow k'-k_{d'} = 2 \textrm{ ou } 3 \mod (4).$$
\end{proof}

\subsection{\'Etude du cas $\tau \neq 0$}

On s'intéresse aux autres structures réelles lorsque la permutaiton $\tau \in S_{\nb}$ est différente de l'identité. Les changements que cela impose à la démonstration précédente sont assez minimes et nous allons, autant que possible, nous appuyer sur ce les développements précédents. On dira d'un point de $X$ qu'il est \emph{imaginaire} (resp. \emph{réel}) s'il n'est pas (resp. s'il est) dans la partie réelle $\R X=\Fix (c_X)$. Pour le choix d'une composante $\K$ ($d' \in \red$, $k'>k_d'-1$), deux situations sont à envisager en fonction du nombre de paires de points marqués imaginaires par rapport au nombre $\ell$ (cf. définition \ref{ell}). Pour $\tau$ un élément d'ordre deux de $S_{\nb}$, on note $r_{\tau}$ le nombre d'éléments invariants (correspondant aux points marqués réels) et $s_{\tau}$ le nombre de permutations (correspondant aux paires imaginaires conjuguées) de sorte que $r_{\tau}+2s_{\tau}=\nb$. On commence par quelques préliminaires techniques.

\subsubsection{Bases et orientations}

Il existe deux structures réelles sur le produit $\CP^1\times\CP^1$. Nommément $c_1:(z_1,z_2)\mapsto(\overline{z}_1,\overline{z}_2)$ de partie réelle $\RP^1 \times \RP^1$ et $c_2:(z_1,z_2)\mapsto(\overline{z}_2,\overline{z}_1)$ de partie réelle homéomorphe à la sphère $S^2$. Considérer une permutation d'ordre deux $\tau$ revient à considérer dans le produit $(\CP^1)^k \setminus Diag_k$ de la construction de $\Mk$ une partie réelle homéomorphe au produit de $r_{\tau}$ droites projectives réelles et $s_{\tau}$ sphères. Les décompositions de l'espace tangent à $\R_{\tau}\Mc$ doivent donc différer de celles définies en \ref{secdecomptan} sur la partie \og sphérique \fg\ de $T_{\underline{z}}(\CP^1)^k$. Par contre, il n'y a pas de modification à apporter aux autres objets (on considère l'espace des section $H^0_{c_X^{\tau}}(C,\Nu)$ équivariantes pour $c_X^{\tau}$ etc.). Pour les mêmes raisons, l'espace tangent réel $T_{\underline{x}}{\R_{\tau}X^{\nb}}$ ne se décompose pas en produit direct de $T_{x_i}{\RX}$ mais est isomorphe au produit $(T_{*}{\RX})^r \oplus (T_{*}{\R_{\tau}X^2})^s$ où $\R_{\tau}X^2$ désigne la partie réelle de $c_{(X^2)}:(x_1,x_2)\mapsto(\overline{x}_2,\overline{x}_1)$.

\subsubsection{Cas $d'\neq0$}

\paragraph{Occurrence $r \geq \ell$}

Cette situation est déjà traitée dans la démonstration précédente. En effet, il suffit de considérer une indexation telle que les $\ell$ premiers points marqués soient tous réels. On reproduit ainsi en tout point la première étude (partie \ref{chemin}) en se dotant d'une base positive par la méthode déterminée dans la partie \ref{basd} dans une restriction aux points marqué réels. Les autres vecteurs étant choisis dans un relevé de $(T|_{\gamma_*}\R_{\tau}X^2)^s$ le long duquel la restriction $d|_{\tilde{\gamma}}\ev$ est injective (on ne \og déplace\fg\ pas les zéro de sections associés à ces points marqués).

\paragraph{Occurrence $r < \ell$}

On doit modifier le raisonnement seulement pour le sous-fibré $(T|_{\gamma_*}\R_{\tau}X^2)^s$ car on veut déplacer des zéros de sections de l'espace des déformations réelles à l'ordre un $H_{c_X^{\tau}}^0(C,\Nu)$ qui sont associés à des couples de points imaginaires conjugués. On pose $\varsigma=\min(\ell,2s)$ et on fait le choix d'une indexation pour laquelle les $\varsigma$ premiers points marqués sont des paires imaginaires conjuguées. Rappelons que le choix d'une orientation pour la courbe de départ $\RA_0$ et d'une orientation $\mathfrak{o}$ sur $\R X \setminus \W$ fournit une orientation sur $T_{\gamma(0)}\R\M^*$ l'espace tangent au point de départ du chemin. De façon équivalente, pour chaque couple de points marqués imaginaires conjugués, on définit une orientation sur la partie réelle homéomorphe à $S^2$ donnée par la structure complexe et qui ne dépend que du choix d'un des points du couple. Aussi, dans la procédure définie en \ref{basd} pour construire une base modèle positive au point de départ d'un chemin $\gamma$, on choisit pour chaque couple de points marqués $(z_i,z_j)$ une base réelle de l'espace tangent $T_{(z_i,z_j)}(\CP^1)^2$ déterminée par le choix de $z_i$ tel que $i<j$ (par convention). On obtient ainsi une base modèle positive de l'espace tangent au point $\gamma(0)$. On reproduit ensuite en tout point la démonstration précédente avec la structure réelle associée à la permutation $\tau$. Les homotopies successives de bases réelles n'ont d'autre effet, après avoir traversé la singularité, que d'inverser l'ordre des points marqués dans chaque paire imaginaires conjuguées. (Voir figure \ref{compconj}.)
\begin{figure}[htp]
\begin{center}
\input{compconj.pstex_t}
\caption{}\label{compconj}
\end{center}
\end{figure}\\
Donc l'orientation induite sur la partie réelle de la source $T_{(z_i,z_j)}(\CP^1)^2$ au point d'arrivée serait opposée à celle définie au point de départ $\gamma(0)$ si une telle transformation concernait les points marqués. On en déduit qu'il en est ainsi pour les homotopies qui affectent les zéros de sections équivariantes. On en conclut que les résultats précédents sont inchangés puisque chaque paire de points marqués imaginaires conjugués qui vont différer dans les présentations de $\underline{z}$ et $\underline{\tilde{z}}$ contribuent dans la matrice de transition (voir \ref{mattrans}) à une permutation des sections avec un signe opposé (c.-à-d. un élément de type $-J_2$). Autrement dit, dans le lemme \ref{matrat} on remplace la matrice $\mathcal{B}_1|_{\mathcal{V}_1}$ par la suivante $$\left(\begin{array}{c|c|c|c|c}I_{\nb} & 0 & 0 & 0 & 0\\ \hline 0 & -J_{\ell-\varsigma}& 0 & 0 & 0\\ \hline 0 & 0 & -J_2 & 0 & 0\\ \hline 0 & 0 & 0 & \ddots & 0\\ \hline 0 & 0 & 0 & 0 & I_{\nb-\ell}\end{array}\right)$$
où la sous-matrice $-J_2$ est répétée $\frac{\varsigma}{2}$ fois ($\varsigma$ est un nombre pair). Pour cette raison, le déterminant de cette matrice est du même signe que celle du lemme \ref{matrat} et dépend seulement de $\frac{\ell}{2} \mod (2)$.

\subsubsection{Cas $d'=0'$}

Lorsque deux paires de points marqués imaginaires conjugués se rencontrent le long du chemin $\gamma$ (transverse à la composante $\mathcal{D}^d_{0,k'}$ de $\R_{\tau}\mathcal{K}_{0,k'}^d$), elles vont se permuter deux à deux pour induire le long d'une trivialisation une orientation différente dans un voisinage du point d'arrivée  $\gamma(1)$. En effet, avec les notations précédentes, $u_{\star}(z^{\star}_i) \in \R X$ dès que $1 \leq i\leq k'$ car sinon $\codim \mathcal{D}^d_{0,k'}\geq 2$. On note $\varrho$ le nombre de points réels ($\varrho \leq r$) concernés par le choix de $\mathcal{D}^d_{0,k'}$ (c'est-à-dire qui se \og déplacent\fg\ le long du chemin $\gamma$). Alors, si $k'= \varrho$ on ne change rien (en choisissant une bonne indexation) et si $k'>\varrho$, on suit le même raisonnement qui aboutit à remplacer dans le lemme \ref{matnul} la matrice $\mathcal{B}_1|_{\mathcal{V}_1}$ par la suivante \[\left(\begin{array}{c|c|c|c|c|c}-I_{k'} & 0 & 0 & 0 & 0 & 0\\ \hline 0 & I_{\nb-k'}& 0 & 0 & 0 & 0\\ \hline 0 & 0 & J_{\varrho}& 0 & 0 & 0\\ \hline 0 & 0 & 0 & J_2 & 0 & 0 \\ \hline 0 & 0 & 0 & 0 & \ddots & 0 \\ \hline 0 & 0 & 0 & 0 & 0 & I_{\nb-k'}\end{array}\right)\] où la sous-matrice $J_2$ est répétée $\frac{k'-\varrho}{2}$ fois ($k'-\varrho$ est pair par définition de la structure complexe $c^{\tau}_{\mathcal{M}}$ et parce que $\mathcal{D}^d_{0,k'}$ est de codimension un). Le déterminant de cette matrice est du même signe que celle du lemme \ref{matnul} dépendant seulement de $k' \mod (2)$.\\

En conclusion on retrouve dans tous ces cas les propriétés sur le déterminant développées dans le corollaire \ref{corodet} et on en déduit le théorème \ref{theo}.

\bigskip
\begin{flushright}
Universidad Complutense Madrid\\
Departamento de \'Algebra\\
puignau@mat.ucm.es
\end{flushright}
\end{document}